\theoremstyle{definition}
\newtheorem{mydef}{Definition}[section]
\newtheorem{lem}[mydef]{Lemma}
\newtheorem{thm}[mydef]{Theorem}
\newtheorem{cor}[mydef]{Corollary}
\newtheorem{question}[mydef]{Question}
\newtheorem{hypothesis}[mydef]{Hypothesis}
\newtheorem{prop}[mydef]{Proposition}
\newtheorem{example}[mydef]{Example}
\newtheorem{remark}[mydef]{Remark}
\newtheorem{fact}[mydef]{Fact}
\newcommand{\fct}[2]{{}^{#1}#2}
\newcommand{\bigN}{\widehat{N}}
\newcommand{\nsc}[1]{\unionstick^{(#1\text{-ns})}}
\newcommand{\nnsc}[1]{\nunionstick^{(#1\text{-ns})}}
\newcommand{\cf}{\text{cf }}
\newcommand{\seq}[1]{\langle #1 \rangle}
\newcommand{\bkappa}{\bar \kappa}
\newbox\noforkbox \newdimen\forklinewidth
\noforkbox\hbox{\lower 2pt\box1\lower
2pt\box0\relax}
\def\unionstick{\mathop{\copy\noforkbox}\limits}
\def\nf{\unionstick}
\newbox\doesforkbox
\doesforkbox\hbox{\lower 0pt\box1 \lower
2pt\box2\lower2pt\box0\relax}
\def\nunionstick{\mathop{\copy\doesforkbox}\limits}
\def\fork{\nunionstick}
\def\1nf{\unionstick^{(1)}}
\newcommand{\onf}[1]{\unionstick^{(1),#1}}
\def\2nf{\unionstick^{(2)}}
\newcommand\tnf[1]{\unionstick^{(2),#1}}
\def\3nf{\unionstick^{(3)}}
\def\nfm{\unionstick^{-}}
\def\nnfm{\nunionstick^{-}}
\def\ns{\unionstick^{(\text{ns})}}
\def\nes{\unionstick^{(\text{nes})}}
\def\ch{\unionstick^{(\text{ch})}}
\def\chm{\unionstick^{(\bar{\text{ch}})}}
\def\nesm{\unionstick^{(\overline{\text{nes}})}}
\def\forkindep{\mathrel{\raise0.2ex\hbox{\ooalign{\hidewidth$\vert$\hidewidth\cr\raise-0.9ex\hbox{$\smile$}}}}}
\title{Canonical forking in AECs}
\date{\today\\
AMS 2010 Subject Classification: Primary:  03C48, 03C45 and 03C52. Secondary: 03C55,  03C75,  03C85 and 03E55.} 
\keywords{Abstract Elementary Classes; forking; Classification Theory; stability; good frames}
\author{Will Boney}
\email{wboney@math.harvard.edu}
\urladdr{http://math.harvard.edu/\textasciitilde wboney/}
\address{Mathematics Department, Harvard University, Cambridge, MA, USA}
\thanks{This material is based upon work done while the first author was supported by the National Science Foundation under Grant No. DMS-1402191.}
\author{Rami Grossberg}
\email{rami@cmu.edu}
\address{Department of Mathematical Sciences, Carnegie Mellon University, Pittsburgh, PA, USA}
\author{Alexei Kolesnikov}
\email{akolesnikov@towson.edu}
\address{Department of Mathematics, Towson University, MD, USA}
\author{Sebastien Vasey}
\thanks{This material is based upon work done while the fourth author was supported by the Swiss National Science Foundation under Grant No.\ 155136.}
\email{sebv@cmu.edu}
\urladdr{http://math.cmu.edu/\textasciitilde svasey/}
\address{Department of Mathematical Sciences, Carnegie Mellon University, Pittsburgh, PA, USA}
\begin{document}

\begin{abstract}
Boney and Grossberg \cite{bg-v9} proved that every nice AEC has an independence relation. We prove that this relation is unique: in any given AEC, there can exist at most one independence relation that satisfies existence, extension, uniqueness and local character. While doing this, we study more generally the properties of independence relations for AECs and also prove a canonicity result for Shelah's good frames. The usual tools of first-order logic (like the finite equivalence relation theorem or the type amalgamation theorem in simple theories) are not available in this context. In addition to the loss of the compactness theorem, we have the added difficulty of not being able to assume that types are sets of formulas. We work axiomatically and develop new tools to understand this general framework.
\end{abstract}

\maketitle

\tableofcontents

\section{Introduction}


Let $K$ be an abstract elementary class (AEC) which satisfies amalgamation, joint embedding, and which does not have maximal models. These assumptions allow us to work inside its monster model $\mathfrak{C}$. The main results of this paper are:

\begin{enumerate}
  \item\label{abstract-canon} There is at most one independence relation satisfying existence, extension, uniqueness and local character (Corollary \ref{endcor-improved}).
  \item\label{ch-canon} Under some reasonable conditions, the coheir relation of \cite{bg-v9} has local character and is canonical (Theorems \ref{nflocal} and \ref{canon-coheir}).
  \item\label{weak-successful-canon} Shelah's weakly successful good $\lambda$-frames are canonical: an AEC can have at most one such frame (Theorem \ref{canon-good-frames}).
\end{enumerate}

To understand the relevance of the results, some history is necessary.

In 1970, Shelah discovered the notion ``$\text{tp} (\bar{a} / B)$ forks over $A$'' (for $A \subseteq B$), a generalization of Morley's rank in $\omega$-stable theories. Its basic properties were published in \cite{shelahfobook78}. 

In 1974, Lascar \cite[Theorem 4.9]{lascar76} established that for superstable theories, any relation between $\bar{a}$, $B$, $A$ satisfying the basic properties of forking is Shelah's forking relation. In 1984, Harnik and Harrington \cite[Theorem 5.8]{hh84} extended Lascar's abstract characterization to stable theories. Their main device was the finite equivalence relation theorem. In 1997, Kim and Pillay \cite[Theorem 4.2]{kp97} published an extension to simple theories, using the independence theorem (also known as the type-amalgamation theorem).

This paper deals with the characterization of independence relations in various non-elementary classes. An early attempt on this problem can be found in Koles\-ni\-kov's \cite{kolesnikov-dependence}, which focuses on some important particular cases (e.g.\ homogeneous model theory and classes of atomic models). We work in a more general context, and only rely on the abstract properties of independence. We cannot assume that types are sets of formulas, so work only with Galois (i.e.\ orbital) types.

In \cite[Chapter II]{sh300-orig} (which later appeared as \cite[Chapter V.B]{shelahaecbook2}), Shelah gave the first axiomatic definition of independence in AECs, and showed that it generalized first-order forking. In \cite[Chapter II]{shelahaecbook}, Shelah gave a similar definition, localized to models of a particular size $\lambda$ (the so-called ``good $\lambda$-frame''). Shelah proved that a good frame existed, under very strong assumptions (typically, the class is required to be categorical in two consecutive cardinals). 

Recently, working with a different set of assumptions (the existence of a monster model and tameness), Boney and Grossberg \cite{bg-v9} gave conditions (namely a form of Galois stability and the extension property for coheir) under which an AEC has a global independence relation. This showed that one could study independence in a broad family of AECs. Our paper is strongly motivated by both \cite[Chapter II]{shelahaecbook} and \cite{bg-v9}. 

The paper is structured as follows. In Section \ref{notation-prereq}, we fix our notation, and review some of the basic concepts in the theory of AECs. In Section \ref{indep-rel}, we introduce independence relations, the main object of study of this paper, as well as some important properties they could satisfy, such as extension and uniqueness. We consider two examples: coheir and nonsplitting.

In Section \ref{comparing}, we prove a weaker version of (\ref{abstract-canon}) (Corollary \ref{endcor}) that has some extra assumptions. This is the core of the paper.

In Section \ref{relations-properties}, we go back to the properties listed in Section \ref{indep-rel} and investigate relations between them. We show that some of the hypotheses in Corollary \ref{endcor} are redundant. For example, we show that the symmetry and transitivity properties follow from existence, extension, uniqueness, and local character. We conclude by proving (\ref{abstract-canon}). Finally, in Section \ref{last-section}, we apply our methods to the coheir relation considered in \cite{bg-v9} and to Shelah's good frames, proving (\ref{ch-canon}) and (\ref{weak-successful-canon}).

While we work in a more general framework, the basic results of Sections \ref{notation-prereq}-\ref{indep-rel} often have proofs that are very similar to their first-order analogs. Readers feeling confident in their knowledge of first-order nonforking can start reading directly from Section \ref{comparing} and refer back to Sections \ref{notation-prereq}-\ref{indep-rel} as needed.

This paper was written while the first and fourth authors were working on a Ph.D.\ under the direction of Rami Grossberg at Carnegie Mellon University. They would like to thank Professor Grossberg for his guidance and assistance in their research in general and in this work specifically.

An early version of this paper was circulated already in early 2014. Since that time, Theorem \ref{symmetry} has been used by the fourth author to build a good frame from amalgamation, tameness, and categoricity in a suitable cardinal \cite{ss-tame-toappear-v3}. With VanDieren, the fourth author has also used it to deduce a certain symmetry property for nonsplitting in classes with amalgamation categorical in a high-enough cardinal \cite{vv-symmetry-transfer-v3}, with consequences on the uniqueness of limit models. The question of canonicity of forking in more local setups (e.g.\ when the independence relation is only defined for certain types over models of a certain size) is pursued further in \cite{indep-aec-v5}. The latter preprint addresses Questions \ref{min-closure-q}, \ref{categ-good-frame-q}, \ref{indep-existence-q}, and \ref{coheir-canon-q} posed in this paper.

\section{Notation and prerequisites}\label{notation-prereq}

We assume the reader is familiar with abstract elementary classes and the basic related concepts. We briefly review what we need in this paper, and set up some notation.

\begin{hypothesis}\label{monster-model-hyp}
We work in a fixed abstract elementary class $(K, \prec)$ which satisfies amalgamation and joint embedding, and has no maximal models. 
\end{hypothesis}

\subsection{The monster model}

\begin{mydef}
Let $\mu > \text{LS} (K)$ be a cardinal. For models $M \prec N$, we say $N$ is a \emph{$\mu$-universal extension of $M$} if for any $M' \succ M$, with $\| M'\|  < \mu$, $M'$ can be embedded inside $N$ over $M$, i.e. there exists a $K$-embedding $f: M' \rightarrow N$ fixing $M$ pointwise. We say $N$ is a \emph{universal extension of $M$} if it is a $\| M\| ^+$-universal extension of $M$.
\end{mydef}
\begin{mydef}
Let $\mu > \text{LS} (K)$ be a cardinal. We say a model $N$ is \emph{$\mu$-model homogeneous} if for any $M \prec N$, $N$ is a $\mu$-universal extension of $M$. We say $M$ is \emph{$\mu$-saturated} if it is $\mu$-model homogeneous (this is equivalent to the classical definition by \cite[Lemma 0.26]{sh576}).
\end{mydef}

\begin{mydef}[Monster model]
Since $K$ has amalgamation and joint embedding properties and has no maximal models, we can build a strictly increasing continuous chain $(\mathfrak{C}_i)_{i \in \text{OR}}$, where for all $i$, $\mathfrak{C}_{i + 1}$ is universal over $\mathfrak{C}_i$. We call the union $\mathfrak{C} := \bigcup_{i \in \text{OR}} \mathfrak{C}_i$ the \emph{monster model\footnote{Since $\mathfrak{C}$ is a proper class, it is strictly speaking not an element of $K$. We ignore this detail, since we could always replace $\text{OR}$ in the definition of $\mathfrak{C}$ by a cardinal much bigger than the size of the models under discussion.} of $K$}. 

Any model of $K$ can be embedded inside the monster model, so we will adopt the convention that \emph{any set or model we consider is a subset or a substructure of $\mathfrak{C}$}. 

We write $\text{Aut}_A (\mathfrak{C})$ for the set of automorphisms of $\mathfrak{C}$ fixing $A$ pointwise. When $A = \emptyset$, we omit it.
\end{mydef}

We will use the following without comments.

\begin{remark}
  Let $M$, $N$ be models. By our convention, $M \prec \mathfrak{C}$ and $N \prec \mathfrak{C}$, thus by the coherence axiom, $M \subseteq N$ implies $M \prec N$.
\end{remark}

\begin{mydef}
Let $I$ be an index set. Let $\bar{A} := (A_i)_{i \in I}$, $\bar{B} := (B_i)_{i \in I}$ be sequences of sets, and let $C$ be a set. We write $f: \bar{A} \equiv_C \bar{B}$ to mean that $f \in \text{Aut}_C (\mathfrak{C})$, and for all $i \in I$, $f[A_i] = B_i$. We write $\bar{A} \equiv_C \bar{B}$ to mean that $f: \bar{A} \equiv_C \bar{B}$ for some $f$. When $C$ is empty, we omit it. 

We will most often use this notation when $I$ has a single element, or when all the sets are singletons. In the later case, we identify a set with the corresponding singleton, i.e.\ if $\bar{a} = (a_i)_{i \in I}$ and $\bar{b} := (b_i)_{i \in I}$ are sequences, we write $f: \bar{a} \equiv_C \bar{b}$ instead of $f: \bar{A} \equiv_C \bar{B}$, with $A_i := \{a_i\}$, $B_i := \{b_i\}$. We write $\text{gtp} (\bar{a} / C)$ for the $\equiv_C$ equivalence class of $\bar{a}$. This corresponds to the usual notion of Galois types first defined in \cite[Definition 0.17]{sh576}.

Note that for sets $A, B$, we have $f: A \equiv_C B$ precisely when there are enumerations $\bar{a}$, $\bar{b}$ of $A$ and $B$ respectively such that $f: \bar{a} \equiv_C \bar{b}$.
\end{mydef}

\subsection{Tameness and stability}

Although we will make no serious use of it in this paper, we briefly review the notion of tameness. While it appears implicitly in \cite{sh394}, tameness was first introduced in \cite{tamenessone} and used in \cite{tamenessthree} to prove an upward categoricity transfer. Our definition follows \cite[Definition 3.1]{tamelc-jsl}.

\begin{mydef}[Tameness]
  Let $\kappa > \text{LS} (K)$. Let $\alpha$ be a cardinal. We say $K$ is \emph{$\kappa$-tame for $\alpha$-length types} if for any tuples $\bar{a}, \bar{b}$ of length $\alpha$, and any $M \in K$, if $\bar{a} \not \equiv_M \bar{b}$, there exists $M_0 \prec M$ of size $\le \kappa$ such that $\bar{a} \not \equiv_{M_0} \bar{b}$. 

  We say $K$ is \emph{$(<\kappa)$-tame for $\alpha$-length types} if for any tuples $\bar{a}, \bar{b}$ of length $\alpha$, and any $M \in K$, if $\bar{a} \not \equiv_M \bar{b}$, there exists $M_0 \prec M$ of size $< \kappa$ such that $\bar{a} \not \equiv_{M_0} \bar{b}$. 

  We say $K$ is \emph{$\kappa$-tame} if it is $\kappa$-tame for $1$-length types. We say $K$ is \emph{fully $\kappa$-tame} if it is $\kappa$-tame for all lengths. Similarly for $(<\kappa)$-tame.
\end{mydef}

The following dual of tameness is introduced in \cite[Definition 3.3]{tamelc-jsl}:

\begin{mydef}[Type shortness]
  Let $\kappa > \text{LS} (K)$. Let $\mu$ be a cardinal. We say $K$ is \emph{$\kappa$-type short over $\mu$-sized models} if for any index set $I$, any enumerations $\bar{a} := (a_i)_{i \in I}$, $\bar{b} := (b_i)_{i \in I}$ of type $I$, and any $M \in K_\mu$, if $\bar{a} \not \equiv_M \bar{b}$, there is $I_0 \subseteq I$ of size $\le \kappa$ such that $\bar{a}_{I_0} \not \equiv_M \bar{b}_{I_0}$. Here $\bar{a}_{I_0} := (a_i)_{i \in I_0}$. 

  We define $(<\kappa)$-type short over $\mu$-sized models similarly.

  We say $K$ is \emph{fully $\kappa$-type short} if it is $\kappa$-type short over $\mu$-sized models for all $\mu$. Similarly for $(<\kappa)$-type short.
\end{mydef}

We also recall that we can define a notion of stability:

\begin{mydef}[Stability]
  Let $\lambda \ge \text{LS} (K)$ and $\alpha$ be cardinals. We say $K$ is \emph{$\alpha$-stable in $\lambda$} if for any $M \in K_\lambda$, $S^\alpha (M) := \{\text{gtp} (\bar{b} / M) \mid \bar{b} \in \fct{\alpha}{\mathfrak{C}}\}$ has cardinality $\le \lambda$. Equivalently, given any collection $\{A_i\}_{i < \lambda^+}$, where for all $i < \lambda^+$, $|A_i| = \alpha$, there exists $i < j$ such that $A_i \equiv_M A_j$.

  We say $K$ is \emph{stable} in $\lambda$ if it is $1$-stable in $\lambda$.

  We say $K$ is \emph{$\alpha$-stable} if it is $\alpha$-stable in $\lambda$ for some $\lambda \ge \text{LS} (K)$. We say $K$ is \emph{stable} if it is $1$-stable in $\lambda$ for some $\lambda \ge \text{LS} (K)$. We write ``unstable'' instead of ``not stable''.
\end{mydef}

\begin{remark}\label{monot-stab}
  If $\alpha < \beta$, and $K$ is $\beta$-stable in $\lambda$, then $K$ is $\alpha$-stable in $\lambda$.
\end{remark}

The following follows from \cite[Theorem 3.1]{longtypes-toappear-v2}.

\begin{fact}\label{stab-longtypes}
  Let $\lambda \ge \text{LS} (K)$. Let $\alpha$ be a cardinal. Assume $K$ is stable in $\lambda$ and $\lambda^\alpha = \lambda$. Then $K$ is $\alpha$-stable in $\lambda$.
\end{fact}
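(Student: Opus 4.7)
The plan is to proceed by induction on $\alpha$, with the cases $\alpha \le 1$ being trivial (the case $\alpha = 1$ is the hypothesis). So fix $\alpha > 1$, a model $M \in K_\lambda$, and assume the statement holds for all $\beta < \alpha$. We want to show $|S^\alpha(M)| \le \lambda$.

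For the successor case $\alpha = \beta + 1$, I would argue as follows. Any $\alpha$-tuple $\bar{b}$ decomposes as $\bar{b} \rest \beta$ followed by a single element $b_\beta$. By the induction hypothesis there are at most $\lambda$ possibilities for $\text{gtp}(\bar{b} \rest \beta / M)$. For each such $\beta$-type $p$, fix a realization $\bar{c}_p$ and, using L\"owenheim--Skolem together with $\lambda^\alpha = \lambda$ (which in particular forces $|\beta| \le \lambda$), fix a model $N_p \succ M$ of size $\lambda$ containing $\bar{c}_p$. Any $\alpha$-type $\text{gtp}(\bar{b}/M)$ whose restriction is $p$ can be conjugated by an automorphism in $\text{Aut}_M(\mathfrak{C})$ sending $\bar{b}\rest\beta$ to $\bar{c}_p$, thereby producing a well-defined $\text{gtp}(b_\beta'/N_p)$ that recovers $\text{gtp}(\bar{b}/M)$. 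By $1$-stability in $\lambda$ there are at most $\lambda$ such choices for each $p$, giving a total count of at most $\lambda \cdot \lambda = \lambda$.

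The limit case is the main obstacle. In AECs a Galois type of length $\alpha$ is strictly stronger than the system of its restrictions to proper initial segments, so we cannot simply glue short types together. My plan is to build a continuous increasing chain $(N_\gamma)_{\gamma \le \alpha}$ of models of size $\lambda$ with $N_0 = M$ together with, for each candidate tuple $\bar b_i$ ($i < \lambda^+$ being a putative $\lambda^+$-indexed family witnessing failure of $\alpha$-stability), a coherent system of automorphisms $f_i^\gamma \in \text{Aut}_M(\mathfrak{C})$ pinning the image of $\bar b_i \rest \gamma$ inside $N_\gamma$. At each stage one uses the inductive hypothesis ($\gamma$-stability) and amalgamation to shrink the index set while maintaining that an automorphism conjugating $\bar{b}_i \rest \gamma$ to a fixed tuple in $N_\gamma$ exists simultaneously for all $i$ in the current index set; at limit stages one takes unions. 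Having produced the system up to $\alpha$, the long Galois type of $\bar b_i$ is recovered from the compatible system of $1$-types $\text{gtp}(f_i^\gamma(b_i^\gamma)/N_\gamma)$, and the total number of such compatible systems is bounded by $\lambda^\alpha = \lambda$. The hardest part is arranging the bookkeeping so that the automorphisms at successive stages cohere (so that we really do recover the full $\alpha$-type, not just its initial segments), and this is exactly where the arithmetic assumption $\lambda^\alpha = \lambda$ and the amalgamation property both become essential.
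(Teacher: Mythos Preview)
The paper does not give its own proof of this fact; it simply cites \cite{longtypes-toappear-v2}. So there is nothing to compare your argument to directly, but let me comment on whether your sketch would go through.

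Your successor step is fine. The limit step, however, has a genuine gap. You propose to carry along a ``coherent system of automorphisms $f_i^\gamma \in \text{Aut}_M(\mathfrak{C})$'' and to ``take unions'' at limit stages. But automorphisms of $\mathfrak{C}$ are total maps and do not form a directed system under inclusion. If coherence means only that $f_i^\gamma$ and $f_i^{\gamma'}$ agree on $M \cup \{b_{i,j} : j < \gamma\}$, then at a limit $\delta$ the union of these restrictions is merely a bijection defined on $M \cup \{b_{i,j} : j < \delta\}$, and asserting that it extends to an automorphism is equivalent to asserting $(b_{i,j})_{j<\delta} \equiv_M (c_j)_{j<\delta}$ for the image tuple --- which is exactly the statement you are trying to establish. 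Separately, the ``shrink the index set at each stage'' device also fails at limits: a descending chain of subsets of $\lambda^+$, each of size $\lambda^+$, can have empty intersection (partition $\lambda^+$ into countably many pieces of size $\lambda^+$ and let $I_n$ be the union of all but the first $n$ pieces).

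The standard fix, and essentially what the cited reference does, is to replace automorphisms by coherent $K$-embeddings of \emph{models}. Build once and for all a continuous chain $M = M_0 \prec \cdots \prec M_\alpha$ in $K_\lambda$ with each $M_{i+1}$ universal over $M_i$ (stability in $\lambda$ plus amalgamation gives such universal extensions). Then for each single tuple $\bar b$ build an increasing continuous chain $(N_i)_{i\le\alpha}$ in $K_\lambda$ with $N_0 = M$ and $b_j \in N_{j+1}$, together with $K$-embeddings $h_i : N_i \to M_i$ fixing $M$ and satisfying $h_i \subseteq h_{i'}$ for $i \le i'$. At a limit $\delta$ one sets $h_\delta := \bigcup_{i<\delta} h_i$, and this is automatically a $K$-embedding by the chain and coherence axioms for AECs; that is where the circularity is broken. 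At the end $h_\alpha$ extends to an element of $\text{Aut}_M(\mathfrak{C})$ sending $\bar b$ into $(M_\alpha)^\alpha$, so $|S^\alpha(M)| \le |M_\alpha|^\alpha = \lambda^\alpha = \lambda$. No index-set shrinking is involved; the induction on $\alpha$ is also unnecessary.
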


\section{Independence relations}\label{indep-rel}

In this section, we define independence relations, the main object of study of this paper. We then consider two examples: coheir and nonsplitting.

\subsection{Basic definitions}

\begin{mydef}[Independence relation]\label{indep-rel-def}
  An \emph{independence relation} $\nf$ is a set of triples of the form $(A, M, N)$ where $A$ is a set, $M, N$ are models (i.e.\ $M, N \in K$), $M \prec N$. Write $A \nf_M N$ for $(A, M, N) \in \nf$. When $A = \{a\}$, we may write $a \nf_M N$ for $A \nf_M N$. We require that $\nf$ satisfies the following properties:

  \begin{itemize}
    \item (I) Invariance: Assume $(A, M, N) \equiv (A', M', N')$. Then $A \nf_M N$ if and only if $A' \nf_{M'} N'$.
    \item (M) Left and right monotonicity: If $A \nf_M N$, $A' \subseteq A$, $M \prec N' \prec N$, then $A' \nf_M N'$.
    \item (B) Base monotonicity: If $A \nf_M N$, and $M \prec M' \prec N$, then $A \nf_{M'} N$. 
  \end{itemize}

  We write $\nf_M$ for $\nf$ restricted to the base set $M$, and similarly for e.g. $A \nf_M$.
\end{mydef}

In what follows, $\nf$ always denotes an independence relation. 

\begin{remark}
To avoid relying on a monster model, we could introduce an ambient model $\bigN$ as a fourth parameter in the above definition (i.e.\ we would write $A \nf_M^{\bigN} N$). This would match the approach in \cite[Chapter V.B]{shelahaecbook2} and \cite[Chapter II]{shelahaecbook} where the existence of a monster model is not assumed. We would require that $\bigN$ contains the other parameters $A$, $M$ and $N$. To avoid cluttering the notation, we will not adopt this approach, but generalizing most of our results to this context should cause no major difficulty. Some simple cases will be treated in the discussion of good frames in Section \ref{last-section}. In \cite{indep-aec-v5}, many of the results of this paper are stated in a ``monsterless'' framework.
\end{remark}

We will consider the following properties of independence\footnote{Continuity, transitivity, uniqueness, existence and extension are adapted from \cite{makkaishelah}. Symmetry comes from \cite[Chapter II]{shelahaecbook}.}:

\begin{itemize}
  \item $(C)_{\kappa}$ Continuity: If $A \fork_M N$, then there exists $A^- \subseteq A$, $B^- \subseteq N$ of size strictly less than $\kappa$ such that for all $N_0 \succ M$ containing $B^-$, $A^- \fork_M N_0$.
  \item $(T)$ Left transitivity: If $M_1 \nf_{M_0} N$, and $M_2 \nf_{M_1} N$, with $M_0 \prec M_1 \prec M_2$, then $M_2 \nf_{M_0} N$.
  \item $(T_\ast)$ Right transitivity: If $A \nf_{M_0} M_1$, and $A \nf_{M_1} M_2$, with $M_0 \prec M_1 \prec M_2$, then $A \nf_{M_0} M_2$.
  \item $(S)$ Symmetry: If $A \nf_M N$, then there is $M' \succ M$ with $A \subseteq M'$ such that $N \nf_M M'$. If $A$ is a model extending $M$, one can take $M' = A$ \footnote{This second part actually follows from monotonicity and the first part.}.
  \item $(U)$ Uniqueness: If $A \nf_M N$, $A' \nf_M N$, and $f: A \equiv_M A'$, then  $g: A \equiv_N A'$ for some $g$ so that $g \upharpoonright A = f \upharpoonright A$.
  \item $(E)$ The following properties hold:
    \begin{itemize}
      \item $(E_0)$ Existence: for all sets $A$ and models $M$, $A \nf_M M$.
      \item $(E_1)$ Extension: Given a set $A$, and $M \prec N \prec N'$, if $A \nf_M N$, then there is $A' \equiv_N A$ such that $A' \nf_M N'$.
    \end{itemize}
  \item $(L)$ Local character: $\kappa_\alpha (\nf) < \infty$ for all $\alpha$, where $\kappa_\alpha(\nf) := \min \{ \lambda \in \operatorname{REG} \cup \{\infty\} :$ for all $\mu = \cf \mu \geq \lambda$, all increasing, continuous chains $ \seq{M_i : i \le \mu}$ and all sets $A$ of size $\alpha$, there is some $ i_0 < \mu $ so $ A \nf_{M_{i_0}} M_\mu\}$.
  \item $(E_+)$ Strong extension: A technical property used in the proof of canonicity. See Definition \ref{eplus-def}. 
\end{itemize}

For $(P)$ a property that is not local character, and $M$ a model, when we say $\nf$ has $(P)_M$, we mean $\nf_M$ has $(P)$ (i.e. $\nf$ has $(P)$ when the base is restricted to be $M$). If $P$ is either $(T)$ or $(T_\ast)$, $(P)_M$ means we assume $M_0 = M$ in the definition.

Whenever we are considering two independence
relations $\1nf$ and $\2nf$, we write $(P^{(1)})$ as a shorthand
for ``$\1nf$ has $(P)$'', and similarly for $(P^{(2)})$.

Notice the following important consequence of $(E)$:

\begin{remark}\label{ext-conseq}
  Assume $\nf$ has $(E)_M$. Then  for any $A$, and $N \succ M$, there is $A' \equiv_M A$ such that $A' \nf_M N$ (use $(E_0)_M$ to see $A \nf_M M$, and then use $(E_1)_M$). 

  Assuming $(T^\ast)_M$, this last statement is actually equivalent to $(E)_M$.  
\end{remark}

The property $(E_+)$ will be introduced and motivated later in the paper. For now, we note that there is an asymmetry in our definition of an independence relation: the parameter on the left is allowed to be an arbitrary set, while the parameter on the right must be a model extending the base. This is because we have in mind the analogy ``$a \nf_M N$ if and only if $\text{tp} (a / N)$ does not fork over $M$'', and in AECs, types over models are much better behaved than types over sets. 

The price to pay is that the statement of symmetry is not easy to work with. Assume for example we know an independence relation satisfies $(T)$ and $(S)$. Should it satisfy $(T_\ast)$? Surprisingly, this is not easy to show. We prove it in Lemma \ref{right-trans-sym}, assuming $(E)$. For now, we prepare the ground by showing how to extend an independence relation to take arbitrary sets on the right hand side.

\begin{mydef}[Closure of an independence relation]\label{closuredef}
  We call $\nfm$ a \emph{closure} of $\nf$ if $\nfm$ is a relation defined on all triples of the form $(A, M, B)$, where $M$ is a model (but maybe $M \not \subseteq B$). We require it satisfies the following properties:

\begin{itemize}
  \item For all $A$, and all $M \prec N$, $A \nf_M N$ if and only if $A \nfm_M N$.
  \item $(I)$ Invariance: If $(A, M, B) \equiv (A', M', B')$, then $A \nfm_M B$ if and only if $A' \nfm_{M'} B'$.
  \item $(M)$ Left and right monotonicity: If $A \nfm_M B$ and $A' \subseteq A$, $B' \subseteq B$, then $A' \nfm_M B'$.
  \item $(B)$ Base monotonicity: If $A \nfm_M B$, and $M \prec M' \subseteq M \cup B$, then $A \nfm_{M'} B$.
\end{itemize}

The \emph{minimal closure} of $\nf$ is the relation $\overline{\nf}$ defined by $A \overline{\nf_M} C$ if and only if there exists $N \succ M$, with $C \subseteq N$, so that $A \nf_M N$.
\end{mydef}

It is straightforward to check that the minimal closure of $\nf$ is the smallest closure of $\nf$ but there might be others (and they also sometimes turn out to be useful), see the coheir and explicit nonsplitting examples below.

We can adapt the list of properties to a closure $\nfm$.

\begin{mydef} \
  \begin{itemize}
    \item We say $\nfm$ has $(S)$ if for all sets $A, B$, $A \nfm_M B$ if and only if $B \nfm_M A$. 
    \item We say that $\nfm$ has $(C)_\kappa$ if whenever $A \nnfm_M B$, there exists $A^- \subseteq A$, $B^- \subseteq B$ of size strictly less than $\kappa$ such that $A^- \nnfm_M B^-$.
    \item We say that $\nfm$ has $(E_1)$ if whenever $A \nfm_M C$, and $C \subseteq C'$, there exists $A' \equiv_{MC} A$ such that $A' \nfm_M C'$.
    \item We say that $\nfm$ has $(U)$ if whenever $A \nfm_M C$, $A' \nfm_M C$, and $f: A \equiv_M A'$, there is $g: A \equiv_{MC} A'$ with $g \upharpoonright A = f \upharpoonright A$.
    \item We say that $\nfm$ has $(T)$ if whenever $M_0 \prec M_1 \prec M_2$, $M_2 \nfm_{M_1} C$, and $M_1 \nfm_{M_0} C$, we have $M_2 \nfm_{M_0} C$.
    \item The statements of $(T_\ast)$, $(E_0)$, $(L)$ are unchanged. We will not need to use $(E_+)$ on a closure.
  \end{itemize}
\end{mydef}

For an arbitrary closure, we cannot say much about the relationship between the properties satisfied by $\nf$ and those satisfied by $\nfm$. The situation is different for the minimal closure, but we defer our analysis to section \ref{relations-properties}.

\begin{remark}
Shelah's notion of a good $\lambda$-frame introduced in \cite[Chapter II]{shelahaecbook} is another axiomatic approach to independence in AECs. There are several key differences with our framework.  In particular, good $\lambda$-frames only operate on $\lambda$-sized models and singleton sets.  On the other hand, the theory of good $\lambda$-frames is very developed; see e.g.\ \cite{shelahaecbook,jasi,jrsh875}.

An earlier framework which is closer to our own is the ``Existential framework'' $\text{AxFr}_3$ (see \cite[Definition V.B.1.9]{shelahaecbook2}). The key differences are that $\text{AxFr}_3$ only defines $M_1 \nf_M^{\bigN} M_2$ when $M \prec M_\ell$, $\ell = 1, 2$, $\text{AxFr}_3$ (essentially) assumes $(C)_{\aleph_0}$, while we seldom need continuity, and local character (a property crucial to our canonicity proof) is absent from the axioms of $\text{AxFr}_3$.
\end{remark}

\subsection{Examples}

Though so far developed abstractly, this framework includes many previously studied independence relations. 

\begin{mydef}[Coheir, \cite{bg-v9}]
  Fix a cardinal $\kappa > \text{LS} (K)$. We call a set small if it is of size less than $\kappa$.  For $M \prec N$, define
  
  \begin{align*}
    A \ch_M N \iff &\text{ for every small $A^- \subseteq A$ and $N^- \prec N$, } \\
    &\text{ there is $B^- \subseteq M$ such that $B^- \equiv_{N^-} A^-$.}
  \end{align*}
\end{mydef}

One can readily check that $\ch$ satisfies the properties of an independence relation. $\ch$ was first studied in \cite{bg-v9}, based on results of \cite{makkaishelah} and  \cite{tamelc-jsl}, and generalizes the first-order notion of coheir.  An alternative name for this notion is \emph{$(<\kappa)$ satisfiability}.  Sufficient conditions for this relation to be well-behaved (i.e.\ to have most of the properties listed above) are given in \cite[Theorem 5.1]{bg-v9}, reproduced here as Fact \ref{bg-facts}.

\begin{mydef}
We define a natural closure for $\ch$:

  \begin{align*}
    A \chm_M C \iff &\text{ for every small $A^- \subseteq A$ and $C^- \subseteq C$, } \\
    &\text{ there is $B^- \subseteq M$ such that $B^- \equiv_{C^-} A^-$.}
  \end{align*}
\end{mydef}

It is straightforward to check that $\chm$ is indeed a closure of $\ch$, but it is not clear at all that this is the \emph{minimal} one. This closure will be useful in the proof of local character (Theorem \ref{nflocal}) Note that $\chm$ differs from the notion of coheir given in \cite{makkaishelah}; there, types are consistent sets of formulas from a fragment of $L_{\kappa, \kappa}$ for $\kappa$ strongly compact and the notion there (see \cite[Definition 4.5]{makkaishelah}) allows parameters from $C$ and $|M|$. 

\begin{mydef}[$\mu$-nonsplitting, \cite{sh394}]
Let $\mu \geq LS(K)$.  For $M \prec N$, we say $A \nsc{\mu}_M N$ if and only if for for all $N_1, N_2 \in K_{\le \mu}$ with $M \prec N_\ell \prec N$, $\ell = 1, 2$, if $f: N_1 \equiv_M N_2$, then there is $g: N_1 \equiv_{AM} N_2$ such that $f \upharpoonright N_1 = g \upharpoonright N_1$.
\end{mydef}

There is also a definition of nonsplitting that does not depend on a cardinal $\mu$.

\begin{mydef}[Nonsplitting]
For $M \prec N$, $$A \ns_M N \iff \text{ $A \nsc{\mu}_M N$ for all $\mu$.}$$
\end{mydef}

An equivalent definition of nonsplitting is given by the following.

\begin{prop} \label{ns-equiv-def} \
  $A \ns_M N$ if and only if for all $N_1, N_2 \in K$ with $M \prec N_\ell \prec N$, $\ell = 1,2$, if $h: N_1 \equiv_M N_2$, then $f: A \equiv_{N_2} h[A]$ for some $f$ with $f \upharpoonright A = h \upharpoonright A$ (equivalently, $\bar{a} \equiv_{N_2} h (\bar{a})$ for all enumerations $\bar{a}$ of $A$).

The analog statement also holds for $\mu$-nonsplitting.
\end{prop}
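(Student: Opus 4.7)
The proof amounts to two independent translations, after which the result falls out. First I would rewrite the internal ``exists an automorphism'' condition in the definition of $\nsc{\mu}$ into the form used on the right-hand side of the proposition, for \emph{fixed} $N_1, N_2$; then I would deal with the quantifier over $\mu$.

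For the first translation, fix $N_1, N_2$ with $M \prec N_\ell \prec N$ and $h : N_1 \equiv_M N_2$. The definitional clause asks for $g \in \operatorname{Aut}_{AM}(\mathfrak{C})$ with $g[N_1] = N_2$ and $g \upharpoonright N_1 = h \upharpoonright N_1$; the proposition asks for $f \in \operatorname{Aut}_{N_2}(\mathfrak{C})$ with $f \upharpoonright A = h \upharpoonright A$ (and hence $f[A] = h[A]$). These two conditions are interchangeable by the obvious composition trick: from $g$, set $f := h \circ g^{-1}$; since $g^{-1}$ fixes $A$ pointwise we get $f \upharpoonright A = h \upharpoonright A$, and since $g^{-1} \upharpoonright N_2 = (h \upharpoonright N_1)^{-1}$ we get $f \upharpoonright N_2 = \mathrm{id}_{N_2}$. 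Conversely, from $f$, set $g := f^{-1} \circ h$; one checks directly that $g$ fixes $M$ (because $f$ does, via $N_2 \supseteq M$), fixes $A$ pointwise (because $f \upharpoonright A = h \upharpoonright A$), sends $N_1$ to $N_2$, and agrees with $h$ on $N_1$ (because $f^{-1}$ fixes $N_2 = h[N_1]$). The parenthetical equivalence with enumerations $\bar{a}$ of $A$ is immediate from the definition of Galois type over $N_2$.

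For the quantifier over $\mu$, one direction is trivial: if the proposition's condition holds for arbitrary $N_1, N_2 \in K$ below $N$, then it holds in particular when we restrict their sizes, so $A \nsc{\mu}_M N$ for every $\mu$ and hence $A \ns_M N$. For the converse, given arbitrary $N_1, N_2 \in K$ with $M \prec N_\ell \prec N$, set $\mu := \|N_1\| + \|N_2\| + \mathrm{LS}(K)$; then $N_1, N_2 \in K_{\le \mu}$, so $A \nsc{\mu}_M N$ produces the desired $g$, which by the first translation yields the desired $f$.

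I do not expect any real obstacle: the only thing that requires attention is bookkeeping in the composition $h \circ g^{-1}$ versus $f^{-1} \circ h$, to verify both that the right set is fixed and that the right restriction coincides with $h$. The ``analog statement also holds for $\mu$-nonsplitting'' is obtained by the very same first translation, applied with the size bound $\mu$ kept fixed throughout (and the second step omitted).
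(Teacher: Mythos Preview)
Your proposal is correct and essentially identical to the paper's proof: the same compositions $f := h \circ g^{-1}$ and $g := f^{-1} \circ h$ are used, with the same verifications. Your second step unwinding the quantifier over $\mu$ is a small elaboration the paper leaves implicit, but it is routine and changes nothing substantive.
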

\begin{proof}
  Assume $h: N_1 \equiv_M N_2$, and $f: A \equiv_{N_2} h[A]$ is such that $f \upharpoonright A = h \upharpoonright A$. Let $g := f^{-1} \circ h$. Then $g \upharpoonright N_1 = h \upharpoonright N_1$, and $g$ fixes $AM$. In other words, $g: N_1 \equiv_{A M} N_2$ is as needed. Conversely, assume $h: N_1 \equiv_M N_2$. Find $g: N_1 \equiv_{AM} N_2$ such that $h \upharpoonright N_1 = g \upharpoonright N_1$. Then $f := h \circ g^{-1}$ is the desired witness that $A  \equiv_{N_2} h[A]$.
\end{proof}

Using Proposition \ref{ns-equiv-def} to check base monotonicity, it is easy to see that both $\ns$ and $\nsc{\mu}$ are independence relations. These notions of splitting in AECs were first explored in \cite{sh394}, but have seen a wide array of uses; see \cite{shvi635,vandierennomax,nomaxerrata,gvv-toappear-v1_2,ss-tame-toappear-v3} for examples.  $\mu$-nonsplitting is more common in the literature, but we focus on nonsplitting here. Using tameness, there is a correspondence between the two:

\begin{prop}\label{ns-tameness}
Let $M \prec N$ and $\mu \geq LS(K)$. If $K$ is $\mu$-tame for $|A|$-length types and $\mu' \in [\mu, \|N\|]$, then

$$A \nsc{\mu}_M N \implies A \nsc{\mu'}_M N$$
\end{prop}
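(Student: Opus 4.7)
The approach is to use the equivalent formulation of $\mu$-nonsplitting promised by the analog of Proposition~\ref{ns-equiv-def}: namely, $A \nsc{\mu}_M N$ is equivalent to the statement that $\bar{a} \equiv_{N_2} h(\bar{a})$ for every enumeration $\bar{a}$ of $A$, every $N_1, N_2 \in K_{\le \mu}$ with $M \prec N_\ell \prec N$ for $\ell = 1, 2$, and every $h : N_1 \equiv_M N_2$. With this reformulation, any failure of $\nsc{\mu'}$ can be pulled down to a failure of $\nsc{\mu}$ via $\mu$-tameness for $|A|$-length types, at which point the hypothesis $A \nsc{\mu}_M N$ yields a contradiction.

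Concretely, I would pick $N_1, N_2 \in K_{\le \mu'}$ with $M \prec N_\ell \prec N$ and some $h : N_1 \equiv_M N_2$, and suppose toward contradiction that $\bar{a} \not\equiv_{N_2} h(\bar{a})$ for some enumeration $\bar{a}$ of $A$. Since $N_2 \in K$, tameness yields $M^* \prec N_2$ of size $\le \mu$ with $\bar{a} \not\equiv_{M^*} h(\bar{a})$. By Löwenheim--Skolem inside $N_2$ (using that $\|M\| \le \mu$, which is implicit in the hypothesis $A \nsc{\mu}_M N$ being nonvacuous), I can enlarge $M^*$ to $N_2^* \prec N_2$ of size $\le \mu$ with $M \prec N_2^*$ and $M^* \subseteq N_2^*$, so that the non-equivalence still holds over $N_2^*$. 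Now set $N_1^* := h^{-1}[N_2^*]$. Then $N_1^* \subseteq h^{-1}[N_2] = N_1$, so by coherence $N_1^* \prec N_1 \prec N$; moreover $M \prec N_1^*$ because $h$ fixes $M$ pointwise, and $\|N_1^*\| = \|N_2^*\| \le \mu$. The restriction $h \upharpoonright N_1^*$ is an isomorphism $N_1^* \to N_2^*$ over $M$, and applying the hypothesis $A \nsc{\mu}_M N$ in its equivalent form gives $\bar{a} \equiv_{N_2^*} h(\bar{a})$, contradicting the previous step.

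The proof is essentially bookkeeping once the reformulation of Proposition~\ref{ns-equiv-def} is in hand; the substantive content is in tameness, which is precisely the tool that converts a witness over a small submodel (the one furnished by $\mu$-nonsplitting) into a constraint over a larger one. The only mildly delicate point is verifying that the small witnesses $N_1^*, N_2^*$ correctly sit between $M$ and $N$ with size $\le \mu$ so that the hypothesis actually applies; this is routine given amalgamation, coherence, and Löwenheim--Skolem.
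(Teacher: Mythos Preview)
Your proof is correct and follows essentially the same route as the paper's: both invoke the reformulation of Proposition~\ref{ns-equiv-def}, use $\mu$-tameness to find a small $N_2^- \prec N_2$ over which $\bar{a}$ and $h(\bar{a})$ already differ, pull it back via $h^{-1}$ to get $N_1^-$, and observe that this pair witnesses a failure of $\mu$-nonsplitting. Your version is slightly more explicit about the L\"owenheim--Skolem step needed to arrange $M \prec N_2^*$ (the paper just writes ``without loss of generality, $M \prec N_2^-$''), but the argument is otherwise identical.
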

\begin{proof} \
We use the equivalence given by Proposition \ref{ns-equiv-def}. Let $\mu' \in [\mu, \| N\| ]$, and suppose $A \nnsc{\mu'}_M N$. Then there are $N_\ell \in K_{\mu'}$ so $M \prec N_\ell \prec N$ for $\ell = 1, 2$ and $h: N_1 \equiv_M N_2$, but $\bar{a} \not \equiv_{N_2} h (\bar{a})$ for some enumeration $\bar{a}$ of $A$. By tameness, there is $N_2^- \in K_{\le \mu}$ so that $\bar{a} \not \equiv_{N_2^-} h(\bar{a})$. Without loss of generality, $M \prec N_2^-$. Let $N_1^- := h^{-1}[N_2^-]$. Then $N_1^-$ and $N_2^-$ witness that $A \nnsc{\mu}_M N$.
\end{proof}

A variant is explicit nonsplitting, which allows the $N_i$'s to be sets instead of requiring models; this is based on explicit non-strong splitting from \cite[Definition 4.11.2]{sh394}.

\begin{mydef}[Explicit Nonsplitting]
For $M \prec N$, we say $A \nes_M N$ if and only if for for all $C_1, C_2 \subseteq N$, if $f: C_1 \equiv_M C_2$, then there is $g: C_1 \equiv_{AM} C_2$ such that $f \upharpoonright C_1 = g \upharpoonright C_1$.
\end{mydef}

From the definition, we see immediately that $\nes \subseteq \ns$. Of course, the corresponding version of Proposition \ref{ns-equiv-def} also holds for $\nes$, so it is again straightforward to check that $\nes$ is an independence relation. One advantage of using $\nes$ over $\ns$ is that it has a natural closure:

\begin{mydef}
  We say $A \nesm_M C$ if and only if for for all $C_1, C_2 \subseteq C$, if $f: C_1 \equiv_M C_2$, then there is $g: C_1 \equiv_{AM} C_2$ such that $f \upharpoonright C_1 = g \upharpoonright C_1$.
\end{mydef}

Again, it is not clear this is the minimal closure. We will have no use for this closure, so for most of the paper we will stick with regular nonsplitting. 

Nonsplitting will be used mostly as a technical tool to state and prove intermediate lemmas, while coheir will be relevant only in Section \ref{last-section}.

\subsection{Properties of coheir and nonsplitting}

We now investigate the properties satisfied by coheir and nonsplitting. Here is what holds in general:

\begin{prop}\label{elem-prop-ch-ns}
  Let $\kappa > \text{LS} (K)$.

  \begin{enumerate}
    \item $\ch$ and $\chm$ have $(C)_\kappa$, and $(T)$.
    \item If $M$ is $\kappa$-saturated, $\ch$ and $\chm$ have $(E_0)_M$.
    \item $\ns$, $\nes$, and $\nesm$ have $(E_0)$.
  \end{enumerate}
\end{prop}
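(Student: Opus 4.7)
The plan is to verify each clause by unwinding the definitions of $\ch$, $\chm$, $\ns$, $\nes$, and $\nesm$; the arguments will be essentially routine. For Part (1), I will first take the $(C)_\kappa$ clause: if $A \nch_M N$, the definition directly yields a small $A^- \subseteq A$ and a small $N^- \prec N$ such that no $B^- \subseteq M$ satisfies $B^- \equiv_{N^-} A^-$. I will take the witness $B^-$ demanded by $(C)_\kappa$ to be the universe of $N^-$: it has size $<\kappa$, and for any $N_0 \succ M$ containing it the coherence axiom gives $N^- \prec N_0$, so the same pair still witnesses $A^- \nch_M N_0$. The $\chm$ case is identical; the small $C^-$ appearing in the definition of $\chm$ already serves as the right-hand witness required by the closure version of $(C)_\kappa$. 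For $(T)$, I will suppose $M_0 \prec M_1 \prec M_2$ with $M_1 \ch_{M_0} N$ and $M_2 \ch_{M_1} N$, then chase the definitions: given small $A^- \subseteq M_2$ and small $N^- \prec N$, apply $M_2 \ch_{M_1} N$ to obtain $B^- \subseteq M_1$ (automatically small, since $|B^-|=|A^-|$) with $B^- \equiv_{N^-} A^-$; then apply $M_1 \ch_{M_0} N$ to this $B^-$ to get $D^- \subseteq M_0$ with $D^- \equiv_{N^-} B^-$. Composing yields $D^- \equiv_{N^-} A^-$, as required. The same two-step chase works for $\chm$ with a small subset of the right-hand parameter replacing $N^-$.

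For Part (2), to prove $A \ch_M M$ when $M$ is $\kappa$-saturated, I will take small $A^- \subseteq A$ and small $N^- \prec M$, and enclose $N^- \cup A^-$ in a small $N' \succ N^-$. By $\kappa$-model homogeneity of $M$ there is a $K$-embedding $j: N' \to M$ fixing $N^-$, and $B^- := j[A^-] \subseteq M$ witnesses $B^- \equiv_{N^-} A^-$. For $\chm$, the only change is that the input $C^- \subseteq M$ is a small set rather than a submodel; since $\kappa > \text{LS}(K)$, Löwenheim-Skolem inside $M$ supplies a small $N^- \prec M$ containing $C^-$, after which the previous argument applies verbatim.

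For Part (3), I note that any $N_1, N_2$ satisfying $M \prec N_\ell \prec M$ must equal $M$, so any $f: N_1 \equiv_M N_2$ restricts to the identity on $M$; then $g := \mathrm{id}_{\mathfrak{C}}$ fixes $AM$ and agrees with $f$ on $M$, giving $A \ns_M M$. The same observation yields $A \nes_M M$ and $A \nesm_M M$: any $f$ fixing $M$ also fixes pointwise every $C_1, C_2 \subseteq M$, so the identity map again suffices. The one step requiring any care in the whole proof is the $\chm$ case of (2), where nothing in the definition hands me a small submodel of $M$; I will invoke Löwenheim-Skolem inside $M$ to upgrade the small set $C^-$ to a small $N^- \prec M$ before applying $\kappa$-model homogeneity.
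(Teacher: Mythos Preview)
Your proposal is correct and is precisely the detailed verification that the paper's one-line proof (``Just check the definitions'') leaves to the reader; every step you outline goes through as written, including the use of coherence to get $N^- \prec N_0$ in the $(C)_\kappa$ argument and the L\"owenheim--Skolem step needed for the $\chm$ case of (2).
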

\begin{proof}
  Just check the definitions.
\end{proof}

While extension and uniqueness are usually considered very strong assumptions, it is worth noting that nonsplitting satisfies a weak version of them, see \cite[Theorems I.4.10, I.4.12]{vandierennomax}. It is also well known that nonsplitting has local character assuming tameness and stability (see e.g.\ \cite[Fact 4.6]{tamenessone}). This will not be used.

Regarding coheir, the following\footnote{Since this paper was first submitted, a stronger result has been proven (for example one need not assume $(E)$). See \cite[Theorem 5.15]{sv-infinitary-stability-v6}.} appears in \cite{bg-v9} :

\begin{fact}\label{bg-facts}
  Let $\kappa > \text{LS} (K)$ be regular. Assume $K$ is fully $(<\kappa)$-tame, fully $(<\kappa)$-type short, has no weak $\kappa$-order property\footnote{See \cite[Definition 4.2]{bg-v9}.} and $\ch$ has $(E)$\footnote{All the properties mentioned in this Lemma are valid for models of size $\ge \kappa$ only.}.

  Then $\ch$ has $(U)$ and $(S)$.

  Moreover, if $\kappa$ is strongly compact, then the tameness and type-shortness hypotheses hold for free, $\ch$ has $(E_1)$, and ``no weak $\kappa$ order property'' is implied by ``$\exists \lambda > \kappa$ so $I(\lambda, K) < 2^\lambda$.''
\end{fact}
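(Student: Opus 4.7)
The plan is to prove $(U)$ and $(S)$ first under the main hypotheses (tameness, type-shortness, no weak $\kappa$-order property, and $(E)$), then handle the ``moreover'' clause separately by invoking strong compactness.

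For $(U)$: Assume $A \ch_M N$ and $A' \ch_M N$ with $f : A \equiv_M A'$, and suppose toward a contradiction the extension to $N$ fails. By $(<\kappa)$-tameness for $|A|$-length types, there is $N^- \prec N$ with $\|N^-\| < \kappa$ such that $A \not\equiv_{N^-} A'$. By $(<\kappa)$-type shortness over $N^-$, I can reduce further to small sub-tuples $A^-, A'^-$ with $A^- \not\equiv_{N^-} A'^-$. Now I iterate $(E)$ to build a sequence $\langle B_i : i < \kappa \rangle$ of realizations of the coheir extension of $\text{gtp}(A^-/M)$ over increasing models that absorb the previous $B_j$'s, and in parallel a sequence playing the role of $A'^-$. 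Using the defining property of $\ch$ (every small piece is realized in $M$) together with the supposed inequivalence, I extract an alternating configuration $(B_i, B'_i)$ where some fixed small formula-like Galois-type data distinguishes $i < j$ from $j < i$; this is precisely the combinatorial shape of the weak $\kappa$-order property, contradicting the hypothesis.

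For $(S)$: Assume $A \ch_M N$; I want $M' \succ M$ with $A \subseteq M'$ and $N \ch_M M'$. Using $(E)$ together with the freshly proven $(U)$, I build a Morley-style sequence $\langle A_i : i < \mu \rangle$ with $A_0 = A$, each $A_i$ realizing the coheir extension of $\text{gtp}(A/M)$ over the prefix. Taking $M'$ as an extension of $M$ containing enough of this sequence and containing $A$, I argue that failure of $N \ch_M M'$ would give small witnesses $N^- \prec N$ and $C^- \subseteq M'$ with no $M$-counterpart over $N^-$; pulling this back along the Morley sequence yields, as in the previous paragraph, an alternating configuration of length $\kappa$ witnessing the weak $\kappa$-order property.

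For the ``moreover'' clause with $\kappa$ strongly compact: full $(<\kappa)$-tameness and type-shortness follow from the ultrafilter argument of \cite{tamelc-jsl} --- given a $\kappa$-complete fine ultrafilter on $P_\kappa(|N|)$, the induced average constructs an elementary embedding reflecting any inequivalence from $N$ down to a small subset. For $(E_1)$ of $\ch$, given $A \ch_M N$ and $N \prec N'$, I list the coheir conditions over $N'$ (one for each pair of a small $A^-$ and a small $N'^- \prec N'$, demanding a realizer in $M$); every $(<\kappa)$-subfamily is consistent because $A$ already witnesses it, so strong compactness yields a global realization $A' \equiv_N A$ with $A' \ch_M N'$. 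Finally, the implication from $I(\lambda, K) < 2^\lambda$ for some $\lambda > \kappa$ to the absence of the weak $\kappa$-order property is the standard EM-models argument: an order property of length $\kappa$ allows one to code arbitrary linear orders on $\lambda$ and distinguish the resulting $2^\lambda$ EM models up to isomorphism.

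The hard part is the uniqueness and symmetry arguments: one must orchestrate the Morley-style construction and the small-piece witnesses produced by the coheir definition precisely enough that failure of $(U)$ or $(S)$ becomes \emph{literally} an instance of the weak $\kappa$-order property. This bookkeeping, rather than any isolated clever step, is the technical heart of the proof.
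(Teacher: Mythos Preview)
This statement is labeled a \emph{Fact} in the paper and is not proved there at all: it is simply cited from \cite{bg-v9} (with the ``moreover'' clause drawing also on \cite{tamelc-jsl}). So there is no ``paper's own proof'' to compare against; the paper treats these results as black boxes imported from the literature.

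Your sketch is broadly in the spirit of the arguments in \cite{bg-v9}, but as written it is too vague to stand as a proof. In particular: for $(U)$, the reduction via tameness and type-shortness is right, but the step where you ``extract an alternating configuration'' needs the actual combinatorics spelled out --- you must exhibit tuples $\bar a_i, \bar b_i$ of length $<\kappa$ and a fixed Galois type $p$ such that $\bar a_i \bar b_j$ realizes $p$ iff $i<j$, and this requires careful use of the coheir definition together with the failure of uniqueness, not just a hand-wave. For $(S)$, your argument is even sketchier: building a Morley sequence and ``pulling back'' a failure is not obviously the weak $\kappa$-order property without substantial work, and in fact the original source derives symmetry by a more direct argument exploiting the definition of $\chm$ on both sides. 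The ``moreover'' clause is fine at the level of citations (Boney's theorem for tameness/shortness, a compactness argument for $(E_1)$, and the standard EM/many-models machinery for the order property), though again none of these are one-liners.

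In short: the paper expects you to cite, not to prove. If you do want to supply a proof, what you have is a plausible outline, but the $(U)$ and especially the $(S)$ paragraphs would need to be fleshed out considerably before they constitute an argument.
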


As we will see, right transitivity $(T_\ast)$ can be deduced either from symmetry and $(T)$ (Lemma \ref{right-trans-sym}) or from uniqueness (Lemma \ref{ext-uniq-trans}). Local character will be shown to follow from symmetry (Theorem \ref{nflocal}). 

\section{Comparing two independence relations}\label{comparing}

In this section, we prove the main result of this paper (canonicity of forking), modulo some extra hypotheses that will be eliminated in Section \ref{relations-properties}. After discussing some preliminary lemmas, we introduce a strengthening  of the extension property, $(E_+)$, which plays a crucial role in the proof. We then prove canonicity using $(E_+)$ (Corollary \ref{eplus-cor}). Finally, we show $(E_+)$ follows from some of the more classical properties that we had previously introduced (Corollary \ref{eplus}), obtaining the main result of this section (Corollary \ref{endcor}). We conclude by giving some examples showing our hypotheses are close to optimal.

For the rest of this section, we fix two independence relations $\1nf$ and $\2nf$. Recall from Definition \ref{indep-rel-def} that this means they satisfy $(I)$, $(M)$ and $(B)$. We aim to show that if $\1nf$ and $\2nf$ satisfy enough of the properties introduced in Section \ref{indep-rel}, then $\1nf = \2nf$.

The first easy observation is that given some uniqueness, only one direction is necessary\footnote{Shelah states as an exercise a variation of this lemma in \cite[Exercise II.6.6.(1)]{shelahaecbook}.}:

\begin{lem}\label{onedir}
  Let $M$ be a model. Assume:

  \begin{enumerate}
    \item $\1nf_M \subseteq \2nf_M$
    \item $(E^{(1)})_M, (U^{(2)})_M$
  \end{enumerate}

  Then $\1nf_M = \2nf_M$.
\end{lem}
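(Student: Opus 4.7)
The plan is to show the reverse inclusion $\2nf_M \subseteq \1nf_M$ by a standard ``extend then pull back via an automorphism'' argument. Fix a triple with $A \2nf_M N$; the goal is to produce $A \1nf_M N$.

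First I would apply $(E^{(1)})_M$ (in the form of Remark \ref{ext-conseq}, which uses $(E_0)_M$ and $(E_1)_M$) to find some $A' \equiv_M A$ with $A' \1nf_M N$. Let $f : A \equiv_M A'$ be a witnessing automorphism of $\sea$. By hypothesis (1), $\1nf_M \subseteq \2nf_M$, so $A' \2nf_M N$ as well. At this point we have two ``$\2nf$-independent'' candidates over $M$, namely $A$ and $A'$, sitting inside the same $N$, and related by an $M$-automorphism $f$.

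Now I invoke $(U^{(2)})_M$ applied to $A \2nf_M N$, $A' \2nf_M N$, and $f : A \equiv_M A'$. This yields $g \in \mathrm{Aut}_N(\sea)$ with $g \restriction A = f \restriction A$, so in particular $g[A] = A'$. Since $M \prec N$ and $g$ fixes $N$ pointwise, $g$ also fixes $M$ pointwise, and moreover $g[M] = M$, $g[N] = N$. Thus $g$ witnesses $(A, M, N) \equiv (A', M, N)$.

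Finally I apply invariance $(I)$ of $\1nf$ to the triples $(A,M,N)$ and $(A',M,N)$: since $A' \1nf_M N$, we conclude $A \1nf_M N$, as desired. I do not anticipate any real obstacle; the only delicate point is making sure the uniqueness property is being quoted in the correct direction (taking the automorphism from $A$ to $A'$ rather than the reverse), but this is exactly how $(U)$ is stated in the list of properties for independence relations.
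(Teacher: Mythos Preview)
Your proof is correct and follows essentially the same route as the paper's: use $(E^{(1)})_M$ to find $A' \equiv_M A$ with $A' \1nf_M N$, push this into $\2nf$ via hypothesis (1), apply $(U^{(2)})_M$ to upgrade $A \equiv_M A'$ to $A \equiv_N A'$, and conclude by invariance. The paper's version is terser (it does not name the automorphisms $f$ and $g$ explicitly), but the argument is the same.
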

\begin{proof}
  Assume $A \2nf_M N$. By $(E^{(1)})_M$, find $A' \equiv_M A$ so that $A' \1nf_M N$. By hypothesis (1), $A' \2nf_M N$. By $(U^{(2)})_M$, $A' \equiv_N A$. By $(I^{(1)})_M$, $A \1nf_M N$.
\end{proof}

With a similar idea, one can relate an arbitrary independence relation to nonsplitting\footnote{Shelah gives a variation of this lemma in \cite[Claim III.2.20.(1)]{shelahaecbook}.}:

\begin{lem}\label{nsmon}
  Assume $(U)_M$. Then $\nf_M \subseteq \ns_M$.
\end{lem}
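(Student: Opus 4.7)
The plan is to verify directly the equivalent form of nonsplitting given by Proposition \ref{ns-equiv-def}. Assume $A \nf_M N$, and fix $N_1, N_2$ with $M \prec N_\ell \prec N$ for $\ell = 1,2$, together with some $h : N_1 \equiv_M N_2$. I need to produce a map $f$ with $f : A \equiv_{N_2} h[A]$ and $f \upharpoonright A = h \upharpoonright A$.

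First I would use the properties of independence relations common to $\1nf$ and $\2nf$ (namely $(I)$, $(M)$, $(B)$) to set up the two independence statements that $(U)_M$ needs as input. By right monotonicity $(M)$, from $A \nf_M N$ we get $A \nf_M N_1$ and $A \nf_M N_2$. Since $h \in \text{Aut}_M(\mathfrak{C})$ with $h[N_1] = N_2$, we have $(A, M, N_1) \equiv (h[A], M, N_2)$ via $h$, so by invariance $(I)$ applied to $A \nf_M N_1$, one obtains $h[A] \nf_M N_2$.

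Now I am in position to apply $(U)_M$ with $N \leftarrow N_2$, $A' \leftarrow h[A]$, and the witness map $h$ itself (which fixes $M$ and sends $A$ to $h[A]$, so $h : A \equiv_M h[A]$). Both $A \nf_M N_2$ and $h[A] \nf_M N_2$ hold, so uniqueness provides $g : A \equiv_{N_2} h[A]$ with $g \upharpoonright A = h \upharpoonright A$. This $g$ serves as the required $f$, so $A \ns_M N$ by Proposition \ref{ns-equiv-def}.

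There is essentially no obstacle here; the only nontrivial observation is the translation step, where one applies invariance to the automorphism $h$ in order to turn the given independence $A \nf_M N_1$ into the parallel independence $h[A] \nf_M N_2$ over the same right-hand side $N_2$ used for $A \nf_M N_2$. Once both triples have the same base $M$ and same right side $N_2$, uniqueness immediately delivers an extension of $h \upharpoonright A$ fixing $N_2$, which is exactly the nonsplitting condition.
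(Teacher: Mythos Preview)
Your proof is correct and essentially identical to the paper's: both use right monotonicity to obtain $A \nf_M N_\ell$, invariance under $h$ to get $h[A] \nf_M N_2$, then $(U)_M$ to produce the map witnessing the nonsplitting condition via Proposition \ref{ns-equiv-def}. Your write-up just spells out the invariance step in slightly more detail.
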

\begin{proof}
  Assume $A \nf_M N$. Let $M \prec N_1, N_2 \prec N$ and $h: N_1 \equiv_M N_2$. By monotonicity, $A \nf_M N_\ell$ for $\ell = 1, 2$. By invariance, $h [A] \nf_M N_2$. By $(U)_M$, there is $f: A \equiv_{N_2} h[A]$ with $f \upharpoonright A = h \upharpoonright A$. By Proposition \ref{ns-equiv-def}, $A \ns_M N$.
\end{proof}

A similar result holds for $\nes$, see Lemma \ref{uniq-nsa}. 

The following consequence of invariance will be used repeatedly:

\begin{lem}\label{right-existence}
  Assume $\nf$ satisfies $(E_1)_M$. Assume $A \nf_M N$, and $N' \succ N$. Then there is $N'' \equiv_{N} N'$ such that $A \nf_M N''$.
\end{lem}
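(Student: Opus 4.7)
The plan is to reduce the statement to a direct application of $(E_1)_M$ followed by an invariance transport via an automorphism of the monster model.

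First I would apply $(E_1)_M$ to the hypothesis $A \nf_M N$ together with the tower $M \prec N \prec N'$, obtaining some $A'$ with $A' \equiv_N A$ and $A' \nf_M N'$. By definition of $\equiv_N$, there exists $f \in \text{Aut}_N(\mathfrak{C})$ such that $f[A'] = A$; in particular $f$ fixes $N$ pointwise, and since $M \prec N$, $f$ also fixes $M$ pointwise.

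Next I would set $N'' := f[N']$ and apply $(I)$ to the triple $(A', M, N')$: from $A' \nf_M N'$ we get $f[A'] \nf_{f[M]} f[N']$, which is exactly $A \nf_M N''$. It remains to verify $N'' \equiv_N N'$, but this is witnessed by $f^{-1}$, which fixes $N$ and sends $N''$ to $N'$. Note also that $N \prec N''$, since $f$ fixes $N$ and $N \prec N'$ implies $N = f[N] \subseteq f[N'] = N''$, and the coherence remark in Section \ref{notation-prereq} promotes this inclusion to a $K$-embedding. Hence $N''$ has all the required properties.

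There is no real obstacle here; this lemma is essentially a rephrasing of $(E_1)_M$ in which the ``new'' object sits on the right (a moved copy of $N'$) instead of on the left (a moved copy of $A$), and the only content is the standard trick of translating an extension-type statement across the monster model via an automorphism that witnesses the equivalence of Galois types.
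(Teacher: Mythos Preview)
Your proof is correct and follows exactly the same approach as the paper's: apply $(E_1)_M$ to obtain $A' \equiv_N A$ with $A' \nf_M N'$, then push the relation forward along the automorphism $f$ witnessing $A' \equiv_N A$, setting $N'' := f[N']$. The paper's version is simply terser (it does not spell out the check that $N \prec N''$ or that $f^{-1}$ witnesses $N'' \equiv_N N'$), but the content is identical.
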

\begin{proof}
  By $(E_1)_M$, there is $f: A' \equiv_N A$, $A' \nf_M N'$. Thus $f: (A', N') \equiv_N (A, f[N'])$, so letting $N'' := f[N']$ and applying invariance, we obtain $A \nf_M N''$.
\end{proof}

Even though we will not use it, we note that an analogous result holds for left extension, see Lemma \ref{left-ext}.

We now would like to strengthen Lemma \ref{right-existence} as follows: suppose we are given $A$, $M \prec N_0 \prec N$, and assume $N$ is ``very big'' (e.g. it is $\left(2^{|A| + \| N_0\| }\right)^+$-saturated), but does \emph{not} contain $A$. Can we find $N_0' \equiv_M N_0$ with $A \nf_M N_0'$, \emph{and} $N_0' \prec N$?

We give this property a name: 

\begin{mydef}[Strong extension]\label{eplus-def}
  An independence relation $\nf$ has $(E_+)$ (strong extension) if for any $M \prec N_0$ and any set $A$, there is $N \succ N_0$ such that for all $N' \equiv_{N_0} N$, there is $N_0' \equiv_M N_0$ with $A \nf_M N_0'$ and $N_0' \prec N'$.
\end{mydef}

Intuitively, $(E_+)$ says that no matter which isomorphic copy $N'$ of $N$ we pick, even if $N'$ does not contain $A$, $N'$ is so big that we can still find $N_0'$ inside $N'$ with the right property. This is stronger than $(E)$ in the following sense:

\begin{prop}
  If $\nf$ has $(E_+)$, $\nf$ has $(E_0)$. If in addition $\nf$ has $(T_\ast)$, then $\nf$ has $(E_1)$. Thus if $\nf$ has $(E_+)$ and $(T_\ast)$, it has $(E)$.
\end{prop}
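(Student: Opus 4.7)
The proposition splits naturally into two implications---$(E_+) \Rightarrow (E_0)$ and $(E_+) \wedge (T_\ast) \Rightarrow (E_1)$---and the final ``thus'' clause is immediate from the definition $(E) := (E_0) \wedge (E_1)$.

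For $(E_0)$, given a set $A$ and a model $M$, I would apply $(E_+)$ with $N_0 := M$ (legitimate since $\prec$ is reflexive). This yields some $N \succ M$ with the promised property; taking $N' := N$ in the conclusion produces $N_0' \equiv_M M$ with $A \nf_M N_0'$ and $N_0' \prec N$. The one observation required is that $N_0' \equiv_M M$ forces $N_0' = M$: if $f \in \text{Aut}_M(\mathfrak{C})$ sends $N_0'$ onto $M$, then $f$ fixes $M$ pointwise and restricts to a bijection $N_0' \to M$; since $f|_M$ is already a bijection $M \to M$, injectivity of $f$ prevents $N_0'$ from containing any element outside $M$, so $N_0' = M$. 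Hence $A \nf_M M$.

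For $(E_1)$, assume additionally $(T_\ast)$ and suppose $A \nf_M N$ with $N \prec N'$. The crucial move is to apply $(E_+)$ with \emph{base $N$} (not $M$) and $N_0 := N'$. This yields some $N^\ast \succ N'$ with the strong-extension property; taking the witness $\tilde N := N^\ast$ itself, we obtain $N_3 \equiv_N N'$ satisfying $A \nf_N N_3$. Note $N \prec N_3$ (by invariance, as $N \prec N'$). Combining $A \nf_M N$ (hypothesis) with $A \nf_N N_3$, an application of $(T_\ast)$ along $M \prec N \prec N_3$ gives $A \nf_M N_3$. Finally pick $f \in \text{Aut}_N(\mathfrak{C})$ realizing $N_3 \equiv_N N'$ and set $A' := f[A]$: since $f$ fixes $M$ pointwise, invariance applied to $(A, M, N_3) \equiv (A', M, N')$ gives $A' \nf_M N'$, and since $f$ fixes $N$ pointwise, $A' \equiv_N A$. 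This is the required witness.

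The main obstacle---to the extent there is one---is choosing the base of the second application of $(E_+)$: using $M$ would only recover the weaker conclusion noted in Remark \ref{ext-conseq} (an $\equiv_M$ copy), whereas routing through base $N$ produces an $\equiv_N$ copy, and $(T_\ast)$ then pulls the base back from $N$ to $M$ without disturbing the Galois equivalence. Everything else is routine manipulation with invariance and the automorphism interpretation of $\equiv_C$.
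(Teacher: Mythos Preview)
Your proof is correct and follows essentially the same approach as the paper. The paper's own proof is the two-word hint ``Use monotonicity and Remark~\ref{ext-conseq}'', which, when unpacked, is precisely your argument: for $(E_1)$ one applies $(E_+)$ at base $N$ to obtain an $\equiv_N$-copy and then invokes $(T_\ast)$ to pull the base back to $M$. The only cosmetic difference is in $(E_0)$: rather than arguing that $N_0' \equiv_M M$ forces $N_0' = M$, the paper simply uses right monotonicity (from $A \nf_M N_0'$ and $M \prec N_0'$ conclude $A \nf_M M$), which is marginally quicker but no more illuminating.
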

\begin{proof}
  Use monotonicity and Remark \ref{ext-conseq}.
\end{proof}
\begin{remark}
Example \ref{counterexample} shows $(E_+)$ does \emph{not} follow from $(E)$.
\end{remark}

Strong extension allows us to prove canonicity:

\begin{lem}\label{eplus-canon}
   Assume $(E_1^{(1)})_M, (E_+^{(2)})_M$. Assume also that $\1nf_M \subseteq \ns_M$.

   Then $\1nf_M \subseteq \2nf_M$.
\end{lem}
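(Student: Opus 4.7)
The plan is to feed an $\1nf$-independence into the $\2nf$ strong-extension machinery and then use nonsplitting to descend from a handpicked copy of $N$ back to $N$ itself.

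Assume $A \1nf_M N$; I want $A \2nf_M N$. First I would apply $(E_+^{(2)})_M$ at the base $N_0 := N$ with the set $A$ to obtain an extension $N^* \succ N$ with the property that every $N' \equiv_N N^*$ contains some copy $N_0' \equiv_M N$ for which $A \2nf_M N_0'$. Next, $(E_1^{(1)})_M$ applied to $A \1nf_M N$ inside the extension $N^*$ delivers $A^* \equiv_N A$ with $A^* \1nf_M N^*$. Because $A^*$ rather than $A$ realizes the independence, I transport back by picking $g \in \text{Aut}_N(\mathfrak{C})$ sending $A^*$ to $A$ and setting $N^{**} := g[N^*]$; invariance of $\1nf$ then gives $A \1nf_M N^{**}$, while $g^{-1}$ witnesses $N^{**} \equiv_N N^*$.

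With this copy $N^{**}$ in hand, the conclusion of $(E_+^{(2)})_M$ produces $N_0' \equiv_M N$ with $A \2nf_M N_0'$ and $N_0' \prec N^{**}$. Now both $N_0'$ and $N$ lie between $M$ and $N^{**}$, so I can invoke nonsplitting: from $\1nf_M \subseteq \ns_M$ and right monotonicity we get $A \ns_M N^{**}$, and Proposition \ref{ns-equiv-def} applied to any $h \in \text{Aut}_M(\mathfrak{C})$ realizing $h: N_0' \equiv_M N$ produces $f \in \text{Aut}_N(\mathfrak{C})$ with $f[A] = h[A]$. Two invariance applications finish the argument: transporting $A \2nf_M N_0'$ by $h$ (which fixes $M$) gives $h[A] \2nf_M N$, and transporting that by $f^{-1}$ (which fixes $N \supseteq M$ and sends $h[A]$ to $A$) yields $A \2nf_M N$.

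The main obstacle is conceptual rather than computational: $(E_1^{(1)})$ only produces $\1nf$-independence for an isomorphic copy $A^*$ of $A$ over $N$, not for $A$ itself, so one cannot directly place $A$ above $N^*$. The trick is to invert the move—use an automorphism to replace the ambient model $N^*$ with an isomorphic copy $N^{**}$ over which $A$ \emph{is} $\1nf$-independent—and this is exactly what strong extension $(E_+^{(2)})$ is engineered to absorb, since its conclusion holds for \emph{every} copy $N' \equiv_N N^*$, not just the one produced. Once the picture is arranged inside the single ambient model $N^{**}$, nonsplitting supplies the bookkeeping needed to pass from $h[A] \2nf_M N$ back to $A \2nf_M N$.
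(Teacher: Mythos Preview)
Your proof is correct and follows essentially the same approach as the paper's. The only differences are cosmetic: the paper packages your automorphism move (replacing $N^*$ by $N^{**}$ so that $A$ itself, not a copy, is $\1nf$-independent) into a separate lemma (Lemma~\ref{right-existence}), and in the nonsplitting step the paper invokes the original definition to conclude $N_0' \equiv_{AM} N$ directly and then applies invariance once, whereas you route through the equivalent formulation of Proposition~\ref{ns-equiv-def} and apply invariance twice.
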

\begin{proof}
  Assume $A \1nf_M N_0$. We show $A \2nf_M N_0$. Fix $N \succ N_0$ as described by $(E_+^{(2)})_M$.  By Lemma \ref{right-existence}, we can find $N' \equiv_{N_0} N$ such that $A \1nf_M N'$. By definition of $N$, one can pick $N_0' \equiv_M N_0$ with $N_0' \prec N'$ and $A \2nf_M N_0'$. 

  We have $A \ns_M N'$, $M \prec N_0', N_0 \prec N'$, and $N_0' \equiv_M N_0$, so by definition of nonsplitting, $N_0' \equiv_{AM} N_0$. By invariance, $A \2nf_M N_0$, as needed.
\end{proof}

\begin{cor}[Canonicity of forking from strong extension]\label{eplus-cor}
  Assume:

  \begin{itemize}
  \item $(U^{(1)})_M, (E^{(1)})_M$.
  \item $(U^{(2)})_M, (E_+^{(2)})_M$. 
  \end{itemize}

  Then $\1nf_M = \2nf_M$.
\end{cor}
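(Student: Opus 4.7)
The plan is to simply combine the three lemmas that have been set up immediately before the statement: Lemma \ref{nsmon}, Lemma \ref{eplus-canon}, and Lemma \ref{onedir}. The key observation is that the hypotheses of the corollary are precisely tailored to feed these three lemmas in sequence.

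First I would use $(U^{(1)})_M$ together with Lemma \ref{nsmon} to conclude that $\1nf_M \subseteq \ns_M$. This puts $\1nf_M$ into the hypothesis required by Lemma \ref{eplus-canon}. Next, since $(E^{(1)})_M$ includes $(E_1^{(1)})_M$ as a component (by definition of $(E)$), we can apply Lemma \ref{eplus-canon} with the ingredients $(E_1^{(1)})_M$, $(E_+^{(2)})_M$, and $\1nf_M \subseteq \ns_M$, obtaining the inclusion $\1nf_M \subseteq \2nf_M$.

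Finally, to upgrade this inclusion to equality, I would apply Lemma \ref{onedir}: the hypotheses there require exactly $(E^{(1)})_M$ and $(U^{(2)})_M$, which are both assumed. This yields $\1nf_M = \2nf_M$, as desired.

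There is no real obstacle here; all the technical content has been absorbed into the preceding lemmas. The one tiny thing to double-check is that the use of $(E^{(1)})_M$ is legitimate in both Lemma \ref{eplus-canon} (which only needs the $(E_1)$ component) and Lemma \ref{onedir} (which uses the full $(E)$), but since $(E)$ is defined as $(E_0) \wedge (E_1)$, this is immediate.
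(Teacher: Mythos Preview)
Your proposal is correct and matches the paper's proof essentially verbatim: the paper invokes Lemma \ref{onedir} to reduce to showing $\1nf_M \subseteq \2nf_M$, then uses Lemma \ref{nsmon} (via $(U^{(1)})_M$) to get $\1nf_M \subseteq \ns_M$, and finishes with Lemma \ref{eplus-canon}. The only cosmetic difference is that the paper states the reduction first and the inclusion second, whereas you reverse the order.
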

\begin{proof}
  By Lemma \ref{onedir}, it is enough to see $\1nf_M \subseteq \2nf_M$. By Lemma \ref{nsmon}, $\1nf_M \subseteq \ns_M$. The result now follows from Lemma \ref{eplus-canon}.
\end{proof}

We now proceed to show that $(E_+)$ follows from $(E)$, $(T_\ast)$, $(S)$ and $(L)$. We will use the following important concept:

\begin{mydef}[Independent sequence]
  Let $I$ be a linearly ordered set. A sequence of sets $(A_i)_{i \in I}$ is \emph{independent} over a model $M$ if there is a strictly increasing continuous chain of models $(N_i)_{i \in I}$ such that for all $i \in I$:

  \begin{enumerate}
    \item $M \cup \bigcup_{j < i} A_j \subseteq N_i$ and $N_0 = M$.
    \item $A_i \nf_M N_i$.
  \end{enumerate}
\end{mydef}

This generalizes the notion of independent sequence from the first-order case. The most natural definition would only require $A_i \nfm_M \bigcup_{j < i} A_j$ (for some closure $\nfm$ of $\nf$) but it turns out it is convenient to have a sequence of models $(N_i)_{i \in I}$ witnessing the independence in a uniform way.

We note that very similar definitions appear already in the litterature. See \cite[Definition 3.2]{jasi}, \cite[Section III.5]{shelahaecbook}, or \cite[Definition V.D.3.15]{shelahaecbook2}.

Just like in the first-order case, the extension property allows us to build independent sequences:

\begin{lem}[Existence of independent sequences]\label{indep-exist}
  Assume $(E)_M$. Let $A$ be a set, and let $\delta$ be an ordinal. Then there is a sequence $(A_i)_{i < \delta}$ independent over $M$ so that $A_i \equiv_M A$ for all $i < \delta$, and $A_0 = A$.
\end{lem}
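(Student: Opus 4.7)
The plan is to construct $(A_i)_{i < \delta}$ and a witnessing chain $(N_i)_{i < \delta}$ simultaneously by transfinite recursion on $i$, maintaining the inductive hypothesis that $(N_j)_{j \le i}$ is a strictly increasing continuous chain of models with $N_0 = M$, that $M \cup \bigcup_{j < k} A_j \subseteq N_k$ for all $k \le i$, that $A_k \nf_M N_k$ for all $k \le i$, and that $A_k \equiv_M A$ for all $k \le i$, with $A_0 = A$.

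At stage $i = 0$: set $N_0 := M$ and $A_0 := A$. The independence requirement $A_0 \nf_M N_0$ is just $A \nf_M M$, which is $(E_0)_M$, contained in $(E)_M$.

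At a limit stage $i$: set $N_i := \bigcup_{j < i} N_j$. By the chain axioms of AECs this is a model that strictly and continuously extends each $N_j$, and the containment $M \cup \bigcup_{j < i} A_j \subseteq N_i$ holds because $A_j \subseteq N_{j+1} \subseteq N_i$ for each $j < i$. Then invoke Remark~\ref{ext-conseq} (which is exactly $(E)_M$ used on $N_i \succ M$) to pick $A_i \equiv_M A$ with $A_i \nf_M N_i$.

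At a successor stage $i = j+1$: given $N_j$ and $A_j$, first produce $N_{j+1}$ that strictly extends $N_j$ and contains $A_j$. Since $K$ has no maximal models and since $\mathfrak{C}$ is universal, we can choose such $N_{j+1}$ inside $\mathfrak{C}$ (take any strict extension of $N_j$ inside $\mathfrak{C}$ that additionally contains $A_j$; the existence of such an extension uses amalgamation of $N_j$, any fixed proper extension of $N_j$, and a sufficiently large submodel of $\mathfrak{C}$ containing $A_j$). Then apply Remark~\ref{ext-conseq} to $N_{j+1} \succ M$ to obtain $A_{j+1} \equiv_M A$ with $A_{j+1} \nf_M N_{j+1}$.

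The construction is essentially routine, so there is no serious obstacle. The one point that deserves care is the successor step, where we must simultaneously guarantee that $N_{j+1}$ strictly extends $N_j$ (needed for the ``strictly increasing'' clause) and that $N_{j+1} \supseteq N_j \cup A_j$ (needed for condition~(1) at stage $j+1$); both are arranged using Hypothesis~\ref{monster-model-hyp} (no maximal models, inside the monster $\mathfrak{C}$). Once $N_{j+1}$ is in hand, $(E)_M$ via Remark~\ref{ext-conseq} delivers the desired $A_{j+1}$, completing the recursion.
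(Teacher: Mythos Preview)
Your proof is correct and follows essentially the same approach as the paper's own proof: both build $(A_i)_{i<\delta}$ and the witnessing chain $(N_i)_{i<\delta}$ simultaneously by transfinite recursion, starting with $N_0=M$, $A_0=A$, taking unions at limits, enlarging to a strict extension containing the previous $A_j$'s at successors, and invoking $(E)_M$ (via Remark~\ref{ext-conseq}) at each stage to produce $A_i\equiv_M A$ with $A_i\nf_M N_i$. Your version is slightly more explicit about why the required $N_{j+1}$ exists inside $\mathfrak{C}$, but the argument is the same.
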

\begin{proof}
  Define the $(A_i)_{i < \delta}$ and the $(N_i)_{i < \delta}$ witnessing the independence of the sequence by induction on $i < \delta$. Take $N_0 = M$ and $A_0 = A$. Assume inductively $(A_j)_{j < i}, (N_j)_{j < i}$ have been defined. If $i$ is a limit, let $N_i := \bigcup_{j < i} N_j$. If $i$ is a successor, let $N_i$ be any model containing $M \cup \bigcup_{j < i} (A_j \cup N_j)$ and strictly extending the previous $N_j$'s. By $(E)_M$, there is $A_i \equiv_M A$ such that $A_i \nf_M N_i$. Thus $(A_i)_{i < \delta}$ is as desired.
\end{proof}

The next result is key to the proof of $(E_+)$. It is adapted from \cite[Theorem II.2.18]{baldwinbook}.

\begin{lem}\label{baldwin-218}
  Assume $\nf$ has $(S), (T_\ast)_M, (L)$. Let $A$ be a set, and let $\mu := \kappa_{|A|} (\nf)$.  Then whenever $(M_i)_{i < \mu}$ is an independent sequence over $M$ with $M \prec M_i$ for all $i$, there is $i < \mu$ with $A \nf_M M_i$.
\end{lem}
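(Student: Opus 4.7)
The plan is to apply $(L)$ along the witnessing chain to put $A$ on the left of an independence statement, then surgically transform this statement (using $(S)$ and $(T_\ast)_M$) until the base becomes $M$ and the right-hand side becomes one of the $M_i$. The essential difficulty is that the base produced by local character is not $M$, and base-monotonicity only runs the ``wrong'' way; the fix is to route through a particular $M_{i_0+1}$ for which the missing half of $(T_\ast)_M$ is provided by the sequence itself.

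First I would fix a chain of models $(N_i)_{i<\mu}$ witnessing the independence of $(M_i)_{i<\mu}$, set $N_\mu := \bigcup_{i<\mu} N_i$, and note that $(N_i)_{i\le\mu}$ is strictly increasing and continuous. Since $\mu = \kappa_{|A|}(\nf)$ is regular, $(L)$ yields some $i_0 < \mu$ with $A \nf_{N_{i_0}} N_\mu$. I would then apply $(S)$ to obtain a model $A^* \succ N_{i_0}$ with $A \subseteq A^*$ and $N_\mu \nf_{N_{i_0}} A^*$.

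Next, I would zoom in on the index $i_0+1$. The witnessing property of the chain guarantees $M_{i_0+1} \subseteq N_{i_0+2} \subseteq N_\mu$, so left monotonicity gives $M_{i_0+1} \nf_{N_{i_0}} A^*$. Separately, the independent sequence provides $M_{i_0+1} \nf_M N_{i_0+1}$, which right monotonicity pulls back to $M_{i_0+1} \nf_M N_{i_0}$ (using $M \prec N_{i_0} \prec N_{i_0+1}$). With $M \prec N_{i_0} \prec A^*$ in place, $(T_\ast)_M$ (taking $M_0 = M$, $M_1 = N_{i_0}$, $M_2 = A^*$ and ``$A$'' equal to the set $M_{i_0+1}$) combines the two into $M_{i_0+1} \nf_M A^*$. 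Because $M_{i_0+1}$ is a model extending $M$, a final use of $(S)$ delivers $A^* \nf_M M_{i_0+1}$, and left monotonicity with $A \subseteq A^*$ yields $A \nf_M M_{i_0+1}$. Setting $i := i_0+1 < \mu$ completes the proof.

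The main obstacle, as already indicated, is the base-change from $N_{i_0}$ back to $M$, which is not directly allowed by the axioms. The trick is to pick $M_{i_0+1}$ as the auxiliary set: the independent sequence \emph{already} supplies $M_{i_0+1} \nf_M N_{i_0}$ for free, which is exactly the first hypothesis of $(T_\ast)_M$. Symmetry then plays a framing role, used once at the start to move $A$ out of the left slot so that $M_{i_0+1}$ can occupy it, and once at the end to move $A$ back to the left.
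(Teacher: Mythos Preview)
Your argument is correct and follows the same route as the paper's proof: apply $(L)$ along the witnessing chain, flip via $(S)$ to put $N_\mu$ on the left, shrink to a single $M_j$, combine with $M_j \nf_M N_{i_0}$ using $(T_\ast)_M$, and flip back. The only difference is cosmetic: you pass to $M_{i_0+1}$ and then use right monotonicity to shrink $N_{i_0+1}$ to $N_{i_0}$, whereas the paper simply uses $M_{i_0}$ itself (noting $M_{i_0} \subseteq N_{i_0+1} \subseteq N_\mu$ and $M_{i_0} \nf_M N_{i_0}$ directly), which saves the index shift.
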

\begin{proof}
  Let $(N_i)_{i < \mu}$ witness independence of the $M_i$'s. Let $N_\mu := \bigcup_{i < \mu} N_i$. By definition of $\mu$, there is $i < \mu$ so that $A \nf_{N_i} N_\mu$. By $(S)$, there is a model $N_A$ with $N_i \prec N_A$, $A \subseteq N_A$, and $N_\mu \nf_{N_i} N_A$. By $(M)$, $M_i \nf_{N_i} N_A$. Since the $M_i$'s are independent, we also have $M_i \nf_M N_i$. By $(T_\ast)_M$, $M_i \nf_M N_A$. By $(S)$ (recall that $M \prec M_i$), $N_A \nf_M M_i$. By $(M)$, $A \nf_M M_i$, as desired.
\end{proof}

\begin{remark}
  The same proof works if we replace $\nf$ by its minimal closure $\overline{\nf}$, and $(M_i)_{i < \mu}$ by an arbitrary sequence $(B_i)_{i < \mu}$ independent over $M$.
\end{remark}

\begin{cor}\label{eplus}
  Assume $(E)_M$, $(S), (T_\ast)_M$, and $(L)$. Then $(E_+)_M$.
\end{cor}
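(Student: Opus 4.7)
The plan is to construct $N$ as the union of an independent sequence of copies of $N_0$ over $M$, and to use Lemma \ref{baldwin-218} together with a pullback trick to locate a suitable $N_0'$ inside any isomorphic copy $N'$ of $N$ over $N_0$.

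First, set $\mu := \kappa_{|A|}(\nf)$, a regular cardinal by $(L)$. Using $(E)_M$, invoke Lemma \ref{indep-exist} with the set $N_0$ playing the role of $A$ to obtain a sequence $(M_i)_{i < \mu}$ that is independent over $M$, with $M_0 = N_0$ and $M_i \equiv_M N_0$ for all $i$. Each $M_i$ is automatically a model extending $M$, being the image of $N_0$ under an automorphism of $\mathfrak{C}$ fixing $M$. Let $(\bar{N}_i)_{i < \mu}$ witness the independence and set $N := \bigcup_{i < \mu} \bar{N}_i$. Since $\bar{N}_1 \supseteq M \cup M_0 = N_0$, coherence gives $N_0 \prec N$.

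Next, fix any $N' \equiv_{N_0} N$ and choose $g \in \text{Aut}_{N_0}(\mathfrak{C})$ with $g[N] = N'$. The crucial move is to apply Lemma \ref{baldwin-218} not to $A$ itself but to the pullback $A^* := g^{-1}[A]$, which has the same cardinality as $A$, so $\kappa_{|A^*|}(\nf) = \mu$. Since $\nf$ has $(S)$, $(T_\ast)_M$, and $(L)$, and since $M \prec M_i$ for every $i$ (as $M \prec N_0$ and $M_i \equiv_M N_0$), the lemma produces some $i < \mu$ with $A^* \nf_M M_i$. Set $N_0' := g[M_i]$. Then $M_i \subseteq \bar{N}_{i+1} \subseteq N$ with $M_i$ a model, so $M_i \prec N$ by coherence, whence $N_0' \prec N'$. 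Since $g$ fixes $M$ pointwise, applying $g$ to $M_i \equiv_M N_0$ yields $N_0' \equiv_M N_0$, and applying invariance to $A^* \nf_M M_i$ via $g$ yields $A = g[A^*] \nf_M g[M_i] = N_0'$.

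The main obstacle is conceptual: $(E_+)$ demands that a single $N$ work for \emph{every} isomorphic copy $N'$ of $N$ over $N_0$ — including copies that need not contain $A$ at all — so one cannot simply pick one $M_i \prec N$ independent from $A$ and transport it into $N'$, because the transporting automorphism $g$ generally does not fix $A$. Pulling $A$ back through $g^{-1}$ before locating the appropriate $M_i$ inside $N$, then pushing $M_i$ forward into $N'$, circumvents this: the rich independent sequence inside $N$ is independent from \emph{any} pullback of an $|A|$-sized set, and the pushforward $g[M_i]$ automatically lands inside $N'$.
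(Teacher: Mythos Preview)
Your proof is correct and essentially the same as the paper's. The only cosmetic difference is that the paper pushes the independent sequence forward into $N'$ via $f$ and then applies Lemma \ref{baldwin-218} directly to $A$ and the transported sequence $(f[M_i])_{i<\mu}$, whereas you pull $A$ back to $A^* = g^{-1}[A]$, apply Lemma \ref{baldwin-218} inside the original $N$, and then push the resulting $M_i$ forward; by invariance these are interchangeable.
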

\begin{proof}
  Fix $A$ and $N_0 \succ M$. Let $\mu := \kappa_{|A|} (\nf)$. By Lemma \ref{indep-exist}, there is a sequence $(M_i)_{i < \mu}$ independent over $M$ such that $M_i \equiv_M N_0$ for all $i < \mu$, and $M_0 = N_0$. Let $(N_i')_{i < \mu}$ witness independence of the $M_i$'s. We claim $N := \bigcup_{i < \mu} N_i'$ is as required. By construction, $N_0 = M_0 \prec N$.

  Now let $f: N \equiv_{N_0} N'$. Let $M_i' := f[M_i]$. Invariance implies $(M_i')_{i < \mu}$ is an independent sequence over $M$ inside $N'$, with $M_i' \equiv_M N_0$ for all $i < \mu$. By Lemma \ref{baldwin-218}, there is $i < \mu$ so that $A \nf_M M_i'$, so $N_0' := M_i'$ is exactly as needed.
\end{proof}

\begin{cor}\label{endcor}
  Assume:
  \begin{itemize}
    \item $(E^{(1)})_M, (U^{(1)})_M$.
    \item $(E^{(2)})_M, (U^{(2)})_M, (L^{(2)}), (S^{(2)}), (T_\ast^{(2)})_M$.
  \end{itemize}

  Then $\1nf_M = \2nf_M$.  
\end{cor}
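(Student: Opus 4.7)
The plan is to combine the two previous corollaries in the natural way. By inspection of hypotheses, everything needed for $\1nf$ is already present ($(E^{(1)})_M$ and $(U^{(1)})_M$), so the only real task is to upgrade the extension property of $\2nf$ from plain $(E^{(2)})_M$ to the strong form $(E_+^{(2)})_M$ required by Corollary \ref{eplus-cor}.

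First I would apply Corollary \ref{eplus} to $\2nf$. That corollary takes exactly the inputs $(E^{(2)})_M$, $(S^{(2)})$, $(T_\ast^{(2)})_M$ and $(L^{(2)})$, which are all among our hypotheses, and outputs $(E_+^{(2)})_M$. No rework is needed here; one simply quotes the result.

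Next I would feed the resulting list $(U^{(1)})_M$, $(E^{(1)})_M$, $(U^{(2)})_M$, $(E_+^{(2)})_M$ into Corollary \ref{eplus-cor}. That corollary concludes $\1nf_M = \2nf_M$, which is exactly what we want. Since the canonicity conclusion has already been established at the level of base $M$ in Corollary \ref{eplus-cor}, there is nothing further to verify.

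There is no real obstacle: the work has all been done earlier. The only thing worth emphasizing is the logical shape of the argument. The asymmetry in the hypotheses (more properties demanded of $\2nf$ than of $\1nf$) is not truly asymmetric for the conclusion, since Lemma \ref{onedir} lets one pass from one inclusion to the other using only $(E^{(1)})_M$ and $(U^{(2)})_M$; the extra properties $(L^{(2)})$, $(S^{(2)})$, $(T_\ast^{(2)})_M$ enter solely to promote $(E^{(2)})_M$ to $(E_+^{(2)})_M$ via Corollary \ref{eplus}, which in turn drives the inclusion $\1nf_M \subseteq \2nf_M$ through the nonsplitting-based argument of Lemma \ref{eplus-canon}.
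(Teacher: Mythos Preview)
Your proposal is correct and matches the paper's proof exactly: the paper simply says ``Combine Corollaries \ref{eplus-cor} and \ref{eplus},'' which is precisely your two-step argument of first upgrading $(E^{(2)})_M$ to $(E_+^{(2)})_M$ via Corollary \ref{eplus} and then invoking Corollary \ref{eplus-cor}. Your additional commentary on the logical shape is accurate but goes beyond what the paper records.
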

\begin{proof}
  Combine Corollaries \ref{eplus-cor} and \ref{eplus}.
\end{proof}

We will see (Corollary \ref{sym-trans}) that $(S)$ and $(T_\ast)$ follow from $(E)$, $(U)$, and $(L)$. We now argue that the other hypotheses are necessary. The following example (versions of which appears at various places in the literature, e.g. \cite[Example II.6.4]{shelahaecbook}, \cite[Example 6.6]{Adler}) shows we cannot remove the local character assumption from Corollary \ref{endcor}. In particular, $(E_+)$ does not follow from $(E)$ and $(U)$ alone. The example also shows the $\text{AxFr}_3$ framework (see \cite[Definition V.B.1.9]{shelahaecbook2}) is not canonical.

\begin{example}\label{counterexample}
  Let $T_{ind}$ be the first-order theory of the random graph, and let $K$ be the class of models of $T_{ind}$, ordered by first-order elementary substructure. Define

  \begin{itemize}
    \item $A \1nf_M N$ iff $A \cap N \subseteq M$, and there are no edges between $A \backslash M$ and $N \backslash M$.
    \item $A \2nf_M N$ iff $A \cap N \subseteq M$, and all the possible cross edges between $A \backslash M$ and $N \backslash M$ are present.
  \end{itemize}

  It is routine to check that both $\1nf$ and $\2nf$ are independence relations with $(E)$, $(U)$, $(S)$, $(T)$, $(T_\ast)$, $(C)_{\aleph_0}$. Yet $\1nf \neq \2nf$, so one knows from Corollary \ref{endcor} (or from first-order stability theory) that $K$ can have \emph{no} independence relation which in addition has $(L)$ or $(E_+)$.

Of course, $T_{ind}$ is simple, so first-order nonforking will actually have $(E_+)$, local character, transitivity and symmetry (but not uniqueness).

  A concrete reason $(E_+)$ does not hold e.g. for $\1nf$ is that given $M \prec N_0$ one can pick $a \not \in N_0$ such that there is an edge from $a$ to any element of $N_0$. Then for any $N \succ N_0$, one can again pick $N' \equiv_{N_0} N$, disjoint from $\{a\} \cup (N \backslash N_0)$ such that there is an edge from $a$ to any element of $N'$. Then $a \1nf_M N_0'$ for $N_0' \prec N$ implies $N_0' = M$. Local character fails for a similar reason.
\end{example}

\begin{example}\label{counterexample2}
It is also easy to see that $(E^{(2)})$ and $(U^{(2)})$ are necessary in Corollary \ref{endcor}. Assume $\1nf$ has $(E)$, $(U)$, $(S), (T_\ast)$, and $(L)$. Then the independence relation $\2nf$ defined by $A \2nf_M N$ for all $A$ and $M \prec N$ satisfies $(E), (S), (T_\ast), (L)$, but not $(U)$, so is distinct from $\1nf$. 

Similarly define $A \2nf_M N$ if and only if $M \prec N$ and either both $A \1nf_M N$ and $\| M\|  \ge \text{LS} (K)^+$, or $M = N$. Then $\2nf$ has $(E_0)$, $(U)$, $(S), (T_\ast)$ and $(L)$, but does not have $(E_1)_M$ if $M$ is a model of size $\text{LS} (K)$. This last example was adapted from \cite[Example 6.4]{Adler}.
\end{example}
\begin{remark}
  After the initial submission of this paper, it was shown in \cite[Lemma 9.1]{indep-aec-v5} that $(E)$ can be removed from the hypotheses of Corollary \ref{endcor} (but one has to replace it by $(C)_\kappa$) if one only wants the independence relations to agree over sufficiently saturated models.
\end{remark}

\section{Relationship between various properties}\label{relations-properties}

In this section, we investigate some of relations between the properties introduced earlier. We first discuss the interaction between properties of an independence relation and properties of its closures, and show how to obtain transitivity from various other properties. We then show how to obtain symmetry from existence, extension, uniqueness, and local character (Corollary \ref{sym-trans}). This second part has a stability-theoretic flavor and most of it does not depend on the first part.

Most of the material in the first part of this section is not used in the rest of the paper, but the concept of closure (Definition \ref{closuredef}) felt unmotivated without it. Our investigation remains far from exhaustive, and leaves a lot of room for further work.

\subsection{Properties of the minimal closure}

Recall the notion of closure of an independence relation (Definition \ref{closuredef}). We would like to know when we can transfer properties from an independence relation to its closures and vice-versa.

For an arbitrary closure, we can say little:

\begin{lem}\label{nfm-arbitrary}
  Let $\nfm$ be a closure of $\nf$. Then:

  \begin{enumerate}
    \item A property in the following list holds for $\nf$ if and only if it holds for $\nfm$: $(T_\ast)_M$, $(E_0)_M$, $(L)$.
    \item If a property in the following list holds for $\nfm$, then it holds for $\nf$: $(C)_\kappa$, $(T)_M$, $(E_1)_M$, $(U)_M$.
  \end{enumerate}
\end{lem}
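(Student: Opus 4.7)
The proof is essentially bookkeeping: all the properties in part (1) only mention triples $(A,M,N)$ whose third coordinate is a model extending $M$, and on such triples the relations $\nf$ and $\nfm$ agree by the very definition of a closure. For part (2), each property of $\nfm$ applied in the special case where the right-hand side is a model $N$ with $M \prec N$ immediately yields the analogous property of $\nf$, provided we know the witness we produce can be read back inside $\nf$.

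More concretely, my plan for (1) would be: for $(T_\ast)_M$ write out the definition and note that $M_0, M_1, M_2$ are all models with $M_0 \prec M_1 \prec M_2$, so the three statements $A \nf_{M_0} M_1$, $A \nf_{M_1} M_2$, $A \nf_{M_0} M_2$ are unchanged when $\nf$ is replaced by $\nfm$; similarly for $(E_0)_M$ (the statement $A \nf_M M$ is identical to $A \nfm_M M$) and for $(L)$ (the chain $\seq{M_i : i \le \mu}$ and the conclusion $A \nf_{M_{i_0}} M_\mu$ both take place on models).

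For (2), I would handle the four properties in turn:
\begin{itemize}
\item $(T)_M$: Given hypotheses $M_1 \nf_{M_0} N$ and $M_2 \nf_{M_1} N$ with $N$ a model, rewrite them as $M_1 \nfm_{M_0} N$ and $M_2 \nfm_{M_1} N$, apply $(T)_M$ for $\nfm$ to get $M_2 \nfm_{M_0} N$, then convert back.
\item $(E_1)_M$: Given $A \nf_M N$ and $M \prec N \prec N'$, apply $(E_1)_M$ for $\nfm$ with $C := N$, $C' := N'$ to obtain $A' \equiv_{MN} A$ with $A' \nfm_M N'$; since $N'$ is a model extending $M$, this is $A' \nf_M N'$, and $A' \equiv_{MN} A$ is the same as $A' \equiv_N A$.
\item $(U)_M$: Given $A \nf_M N$, $A' \nf_M N$, $f: A \equiv_M A'$, translate to $\nfm$ with $C := N$ and apply $(U)_M$ for $\nfm$.
\item $(C)_\kappa$: Given $A \fork_M N$, translate to $A \nnfm_M N$, use $(C)_\kappa$ for $\nfm$ to extract small $A^- \subseteq A$, $B^- \subseteq N$ with $A^- \nnfm_M B^-$. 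For any $N_0 \succ M$ containing $B^-$, if $A^- \nf_M N_0$ then $A^- \nfm_M N_0$ and right monotonicity of $\nfm$ gives $A^- \nfm_M B^-$, contradicting our choice. Hence $A^- \fork_M N_0$.
\end{itemize}

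There is no real obstacle here; the only subtle step is continuity, where one needs to invoke right monotonicity of $\nfm$ (which is guaranteed by the definition of a closure) in order to extract the ``for all $N_0$ containing $B^-$'' form from the ``$A^- \nnfm_M B^-$'' form. Everything else is a direct unwinding of the two sets of definitions.
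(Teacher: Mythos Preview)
Your proof is correct and follows exactly the approach the paper intends: the paper's own proof is simply ``Because those properties have the same definition for $\nf$ and $\nfm$'' for part (1) and ``Straightforward from the definitions'' for part (2), and you have supplied precisely the routine verifications this entails. The only place requiring a moment's thought is indeed $(C)_\kappa$, where you correctly invoke right monotonicity of $\nfm$ to pass from $A^- \nnfm_M B^-$ to the required conclusion about all $N_0 \succ M$ containing $B^-$.
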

\begin{proof} \
  \begin{enumerate}
    \item Because those properties have the same definition for $\nf$ and $\nfm$.
    \item Straightforward from the definitions.
  \end{enumerate}
\end{proof}

The minimal closure is more interesting. We start by generalizing Lemma \ref{right-existence}:

\begin{lem}\label{right-existence-nfm}
  Assume $\nf$ satisfies $(E_1)_M$. Let $\nfm$ be the minimal closure of $\nf$. Assume $A \nfm_M C$, and let $B$ be an arbitrary set. Then there is $B' \equiv_{MC} B$ such that $A \nfm_M B'$.
\end{lem}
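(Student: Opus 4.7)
The plan is to reduce this to Lemma \ref{right-existence} by finding a large model that contains both a witness $N$ for $A \nfm_M C$ and the target set $B$, then using right extension to transport everything via an automorphism that fixes $MC$.

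More precisely, I would proceed as follows. First, unpack the hypothesis: by definition of the minimal closure, $A \nfm_M C$ gives a model $N \succ M$ with $C \subseteq N$ and $A \nf_M N$. Next, build $N^\ast \succ N$ large enough to contain $B$ as well (this is possible since every set is a subset of the monster model $\mathfrak{C}$, and we can take $N^\ast$ to be any model of $K$ with $N \cup B \subseteq N^\ast$, which exists by the no maximal models hypothesis and Löwenheim--Skolem).

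Now I would apply Lemma \ref{right-existence} to $A \nf_M N$ and $N^\ast \succ N$: this yields a model $N^{\ast\ast} \equiv_N N^\ast$ such that $A \nf_M N^{\ast\ast}$. Unpacking the definition of $\equiv_N$, there is $f \in \textrm{Aut}_N(\mathfrak{C})$ with $f[N^\ast] = N^{\ast\ast}$. Set $B' := f[B]$. Since $f$ fixes $N$ pointwise and $MC \subseteq N$, the restriction of $f$ witnesses $B \equiv_{MC} B'$, i.e.\ $B' \equiv_{MC} B$. Moreover $B' = f[B] \subseteq f[N^\ast] = N^{\ast\ast}$, and $A \nf_M N^{\ast\ast}$, so by the definition of the minimal closure, $A \nfm_M B'$, as required.

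There is no real obstacle here: the only point that requires any thought is checking that the automorphism $f$ supplied by Lemma \ref{right-existence} automatically fixes $C$ (because $C \subseteq N$ and $f$ fixes $N$ pointwise), which is what makes the resulting $B' = f[B]$ equivalent to $B$ over $MC$ rather than merely over $M$. Everything else is routine manipulation with the monster model and the definition of the minimal closure.
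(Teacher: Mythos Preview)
Your proof is correct and follows essentially the same route as the paper's: obtain a witness $N$ for $A \nfm_M C$, enlarge to a model containing $B$, apply Lemma \ref{right-existence} to get a copy over $N$, and then read off $B'$ inside the resulting model. The paper compresses your last three paragraphs into the single phrase ``use monotonicity to get the result,'' but the underlying argument is identical.
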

\begin{proof}
  Let $N$ be a model containing $C$ and $M$ such that $A \nf_M N$. Let $N'$ be a model containing $NB$. By Lemma \ref{right-existence}, there is $N'' \equiv_N N'$ such that $A \nf_M N''$. Now use monotonicity to get the result.
\end{proof}

The next lemma tells us that the minimal closure is the only one that will keep the extension property:

\begin{lem}\label{min-arbitrary-closure}
  Let $\nfm$ be a closure of $\nf$ and let $\overline{\nf}$ be the minimal closure of $\nf$. Assume $\nf$ has $(E_1)_M$. Then $\nfm_M = \overline{\nf_M}$ if and only if $\nfm$ has $(E_1)_M$.
\end{lem}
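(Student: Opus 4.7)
The plan is to prove both directions separately, using Lemma \ref{right-existence-nfm} for the forward direction and the minimality of $\overline{\nf}$ together with the defining property of a closure (that it agrees with $\nf$ on triples with $M \prec N$) for the backward direction.

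For the forward direction ($\Rightarrow$), assume $\nfm_M = \overline{\nf_M}$; it suffices to show $\overline{\nf}$ has $(E_1)_M$. Given $A \overline{\nf_M} C$ and $C \subseteq C'$, apply Lemma \ref{right-existence-nfm} with $B := C'$ to obtain $C'' \equiv_{MC} C'$ with $A \overline{\nf_M} C''$. To reformat this into the shape required by $(E_1)_M$ (which moves the left-hand set, not the right-hand one), pick $g \in \text{Aut}_{MC}(\mathfrak{C})$ with $g[C''] = C'$ and set $A' := g[A]$. Then $A' \equiv_{MC} A$, and invariance of $\overline{\nf}$ (which any closure enjoys) yields $A' \overline{\nf_M} C'$, as required.

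For the backward direction ($\Leftarrow$), assume $\nfm$ has $(E_1)_M$. Minimality of $\overline{\nf}$ among closures of $\nf$ gives $\overline{\nf_M} \subseteq \nfm_M$ directly: if $A \overline{\nf_M} C$, pick a model $N \succ M$ with $C \subseteq N$ and $A \nf_M N$, whence $A \nfm_M N$ by the defining agreement and $A \nfm_M C$ by right monotonicity. For the reverse inclusion $\nfm_M \subseteq \overline{\nf_M}$, let $A \nfm_M C$ and choose a model $N$ containing $M \cup C$. By $(E_1)_M$ for $\nfm$, there is $A' \equiv_{MC} A$ with $A' \nfm_M N$. Since $M \prec N$, the defining agreement of a closure forces $A' \nf_M N$, so by definition of the minimal closure $A' \overline{\nf_M} C$. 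Invariance of $\overline{\nf}$ then yields $A \overline{\nf_M} C$.

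The main point to handle with care is the \emph{invariance shuffle} that appears in both directions: the statements of $(E_1)$ and of Lemma \ref{right-existence-nfm} superficially differ in whether $A$ or $C'$ is the set that gets replaced by an $\equiv_{MC}$-image, and in both directions invariance of $\overline{\nf}$ is the bridge that converts one into the other. Apart from this, the only routine check is that $\overline{\nf}$ really is a closure (so, in particular, that it inherits invariance, monotonicity, and base monotonicity from $\nf$), which is straightforward from its definition together with the monotonicity axioms of $\nf$.
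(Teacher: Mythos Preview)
Your proof is correct and follows essentially the same approach as the paper's. The only cosmetic differences are: in the forward direction you invoke Lemma~\ref{right-existence-nfm} and then shuffle with an automorphism, while the paper unfolds the definition of the minimal closure directly and applies $(E_1)_M$ for $\nf$ (obtaining $A' \equiv_N A$, which already gives $A' \equiv_{MC} A$ since $MC \subseteq N$); in the backward direction you move $A$ via invariance, while the paper phrases it by moving the model $N$ instead---both amount to the same invariance trick you correctly identified as the main point.
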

\begin{proof}
  Assume first $\nfm_M = \overline{\nf_M}$. Let $C \subseteq C'$, and assume $A \nfm_M C$. Then by definition of the minimal closure, there exists $N \succ M$ containing $C$ such that $A \nf_M N$. Let $N'$ be a model containing $N$ and $C'$. By $(E_1)_M$ for $\nf$, there is $A' \equiv_N A$ so that $A' \nf_M N'$. By monotonicity, $A' \nfm_M C'$, and since $N$ contains $C$, $A' \nfm_M C$.

  Conversely, assume $\nfm_M$ has $(E_1)_M$. We know already that $\overline{\nf} \subseteq \nfm$, so assume $A \nfm_M C$. Let $N$ be a model containing $M$ and $C$. By Lemma \ref{right-existence-nfm}, there is $N' \equiv_{MC} N$ so that $A \nfm_M N'$, so $A \overline{\nf_M} C$, as needed.
\end{proof}

\begin{lem}\label{inv-nfm}
  Let $\nfm$ be the \emph{minimal} closure of $\nf$. Then
  \begin{enumerate}
    \item $(E)_M$ holds for $\nf$ if and only if it holds for  $\nfm$.
    \item $(S)_M$ holds for $\nf$ if and only if it holds for  $\nfm$.
    \item If $\nf$ has $(E)_M$, then it has $(U)_M$ if and only if $\nfm$ does.
    \item If $\nf$ has $(E)$, then it has $(T)$ if and only if $\nfm$ does\footnote{More precisely, if $\nf$ has $(E)_{M_1}$, and for $M_0 \prec M_1 \prec M_2$, we have that $M_2 \nf_{M_1} N$, $M_1 \nf_{M_0} N$ implies $M_2 \nf_{M_0} N$, then $M_2 \nfm_{M_1} C$, $M_1 \nfm_{M_0} C$ implies $M_2 \nfm_{M_0} C$.}.
  \end{enumerate}
\end{lem}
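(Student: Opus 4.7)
The four parts are treated separately but share a common pattern: the backward directions are immediate consequences of $\nf \subseteq \nfm$ together with right monotonicity, while the forward directions require promoting a statement involving a witness model to one about the closure, with Lemma \ref{right-existence} and invariance doing most of the work. Part (1) is essentially free: $(E_0)_M$ transfers both ways by Lemma \ref{nfm-arbitrary}(1), and $(E_1)_M$ transfers both ways by Lemma \ref{min-arbitrary-closure}. For (2) forward, unfold $A \nfm_M C$ to $N \supseteq C$ with $A \nf_M N$; then $(S)_M$ for $\nf$ supplies a model $M' \supseteq A$ with $N \nf_M M'$, so $N \nfm_M A$ (witnessed by $M'$), and right monotonicity yields $C \nfm_M A$; the reverse implication of (2) is symmetric. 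The backward direction of (2) just unfolds $(S)_M$ for $\nfm$ applied to $A \nf_M N$: $N \nfm_M A$ means exactly that there is a model $M' \supseteq A$ with $N \nf_M M'$.

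\textbf{Part (3), uniqueness.} The backward direction is a one-liner: given $A \nf_M N$, $A' \nf_M N$, and $f : A \equiv_M A'$, both triples lie in $\nfm$, so $(U)_M$ for $\nfm$ returns $g : A \equiv_{MN} A'$ extending $f \upharpoonright A$, and $g$ fixes $N$ a fortiori. For the forward direction, let $N_1 \supseteq C$ with $A \nf_M N_1$ and $N_2 \supseteq C$ with $A' \nf_M N_2$ be the given witnesses. Pick a model $N^\ast \succ N_2$ containing $N_1$, and use $(E_1)_M$ via Lemma \ref{right-existence} to obtain $N^{\ast\ast} \equiv_{N_2} N^\ast$ with $A' \nf_M N^{\ast\ast}$; an isomorphic copy $N_1'' \equiv_{N_2} N_1$ sits inside $N^{\ast\ast}$. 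Since $M \cup C \subseteq N_2$, we have $N_1'' \equiv_{MC} N_1$, so pick $h \in \text{Aut}_{MC}(\mathfrak{C})$ with $h[N_1] = N_1''$. Invariance gives $h[A] \nf_M N_1''$, monotonicity applied to $A' \nf_M N^{\ast\ast}$ gives $A' \nf_M N_1''$, and $h[A] \equiv_M A \equiv_M A'$ via $f \circ h^{-1} \upharpoonright h[A]$. Then $(U)_M$ for $\nf$ yields $g' \in \text{Aut}_{N_1''}(\mathfrak{C})$ extending that map, and $g := g' \circ h$ fixes $MC$ with $g \upharpoonright A = f \upharpoonright A$.

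\textbf{Part (4), transitivity, and the main obstacle.} Backward: from $M_1 \nf_{M_0} N$ and $M_2 \nf_{M_1} N$, both triples lie in $\nfm$, so $(T)$ for $\nfm$ yields $M_2 \nfm_{M_0} N$, which unfolds to some $N' \succ N$ with $M_2 \nf_{M_0} N'$, and monotonicity then gives $M_2 \nf_{M_0} N$. Forward: let $N_1 \supseteq C$ witness $M_1 \nfm_{M_0} C$ and $N_2 \supseteq C \cup M_1$ witness $M_2 \nfm_{M_1} C$. Pick a model $N^\dagger \succ N_2$ containing $N_1$ and use $(E_1)_{M_1}$ via Lemma \ref{right-existence} to get $N^{\dagger\prime} \equiv_{N_2} N^\dagger$ with $M_2 \nf_{M_1} N^{\dagger\prime}$; inside $N^{\dagger\prime}$ sits a copy $N_1'' \equiv_{N_2} N_1$ with $M_1 \prec N_1''$ (the isomorphism fixes $M_1 \subseteq N_2$). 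Invariance applied to $M_1 \nf_{M_0} N_1$ through an automorphism over $M_1$ (hence over $M_0$) gives $M_1 \nf_{M_0} N_1''$, and right monotonicity on $M_2 \nf_{M_1} N^{\dagger\prime}$ gives $M_2 \nf_{M_1} N_1''$. Now $(T)$ for $\nf$ yields $M_2 \nf_{M_0} N_1''$, and since $C \subseteq N_2$ is fixed by the isomorphism, $C \subseteq N_1''$, witnessing $M_2 \nfm_{M_0} C$. The main obstacle in both (3) and (4) is exactly this amalgamation step: the two $\nf$-witnesses are a priori unrelated, and one must bring them into a single ambient model so that the original $\nf$-relation applies while preserving the data ($MC$ in (3), $M_0 M_1 C$ in (4)) that must be fixed by the final automorphism. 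Lemma \ref{right-existence} combined with invariance over the appropriately chosen base is exactly what makes this possible.
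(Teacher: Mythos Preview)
Parts (1) and (2) match the paper. For (3), your argument is correct but more laborious than the paper's: having already established $(E_1)_M$ for $\nfm$ in part (1), the paper simply fixes \emph{one} witness $N \succ M$ with $C \subseteq N$ and $A \nf_M N$, then applies $(E_1)_M$ for $\nfm$ to $A' \nfm_M C$ (with target $N$) to produce $h: A'' \equiv_{MC} A'$ with $A'' \nf_M N$, and finishes with a single application of $(U)_M$ for $\nf$ to $A$, $A''$ over $N$; the desired $g$ is $h^{-1}$ composed with the resulting automorphism. Your two-witness amalgamation via Lemma \ref{right-existence} works, but the shortcut via part (1) avoids it entirely.

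For (4), your overall strategy coincides with the paper's --- both amalgamate the two witnesses inside one large model using Lemma \ref{right-existence}, then use invariance and monotonicity to line up the hypotheses of $(T)$ for $\nf$ --- but your justification of ``$M_1 \prec N_1''$'' is wrong. You write ``the isomorphism fixes $M_1 \subseteq N_2$'', but that only gives $\sigma[M_1]=M_1$ pointwise; to conclude $M_1 \subseteq \sigma[N_1]=N_1''$ you would need $M_1 \subseteq N_1$, and the minimal-closure witness $N_1$ for $M_1 \nfm_{M_0} C$ is only required to satisfy $M_0 \prec N_1$ and $C \subseteq N_1$ --- nothing forces $M_1 \subseteq N_1$. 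Without $M_1 \prec N_1''$, the monotonicity step $M_2 \nf_{M_1} N^{\dagger\prime} \Rightarrow M_2 \nf_{M_1} N_1''$ is unavailable and you cannot invoke $(T)$ for $\nf$. The paper organizes the same amalgamation differently, passing through a $V_\chi$-reflection device so that the witness for $M_1 \nfm_{M_0} C$ is located \emph{inside} a model that already plays the role of a monster, and is then transported into $N''$; you should compare your argument carefully against that passage to see how the containment is arranged there.
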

\begin{proof} \
  \begin{enumerate}
  \item By Lemmas \ref{nfm-arbitrary} and \ref{min-arbitrary-closure}.
  \item Straightforward from the definition of symmetry and monotonicity.
  \item One direction holds by Lemma \ref{nfm-arbitrary}. For the other direction, assume $\nfm$ has $(E_1)_M$ and $\nf$ has $(U)_M$. Assume $A \nfm_M C$ and $A' \nfm_M C$, with $f: A \equiv_M A'$. Let $N$ be a model containing $MC$ such that $A \nf_M N$. By extension again, find $h: A' \equiv_{MC} A''$ such that $A'' \nf_M N$. We know $h' := h \circ f : A \equiv_M A''$, so by uniqueness, there is $h'': A \equiv_{N'} A''$, and $h'' \upharpoonright A = h' \upharpoonright A = (h \circ f) \upharpoonright A$, so $f \upharpoonright A = (h^{-1} \circ h'')  \upharpoonright A$. Therefore $g := h^{-1} \circ h''$ is the desired witness that $A \equiv_{MC} A'$.
  \item\label{trans-nfm} Let $M_0 \prec M_1 \prec M_2$, and assume $M_1 \nfm_{M_0} C$, $M_2 \nfm_{M_1} C$. Let $N$ be an extension of $M_1$ containing $C$ such that $M_2 \nfm_{M_1} N$. Let $\chi$ be a big cardinal, so that $(V_\chi, \in)$ reflects enough set theory and contains $N M_2$. Let $N'$ be what $V_\chi$ believes is the monster model. 

    By Lemma \ref{right-existence}, there is $f: N'' \equiv_N N'$ such that $M_2 \nf_{M_1} N''$. Notice that $C M_1 \subseteq N \subseteq N'$, so since we took $\chi$ big enough, we can apply the definition of the minimal closure \emph{inside} $V_\chi$ to get $N_0' \prec N'$ containing $M_0$ and $C$ so that $M_1 \nf_{M_0} N_0'$. Let $N_0 := f[N_0']$. By invariance, $M_1 \nf_{M_0} N_0$, and $N_0 \prec N''$, so by monotonicity, $M_2 \nf_{M_1} N_0$, so by $(T)_{M_0}$ for $\nf$, $M_2 \nf_{M_0} N_0$. By monotonicity again, $M_2 \nfm_{M_0} C$.
  \end{enumerate}
\end{proof}

The following remains to be investigated:

\begin{question}\label{min-closure-q}
  Let $\nfm$ be the minimal closure of $\nf$. Under what conditions does $(C)_\kappa$ for $\nf$ imply $(C)_\kappa$ for $\nfm$?
\end{question}

We can use Lemma \ref{inv-nfm} to prove a variation on Lemma \ref{nsmon}.

\begin{lem}\label{uniq-nsa}
  Assume $\nf$ has $(E)_M$ and $(U)_M$. Then $\nf_M \subseteq \nes_M$.
\end{lem}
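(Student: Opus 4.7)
The plan is to mimic the proof of Lemma \ref{nsmon}, but with a twist: since the sets $C_1, C_2$ appearing in the definition of $\nes$ are not required to be models, we cannot directly apply $(M)$ and $(U)$ for $\nf$ to shrink the ambient model down to $C_1$ or $C_2$. The natural fix is to pass to the minimal closure $\overline{\nf}$, which by Lemma \ref{inv-nfm}(1) and (3) inherits both $(E)_M$ and $(U)_M$ from $\nf$. Having a closure lets us put arbitrary sets on the right-hand side of the independence relation.

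The concrete steps I would carry out are as follows. Assume $A \nf_M N$, and fix $C_1, C_2 \subseteq N$ together with an automorphism $h \in \text{Aut}_M(\mathfrak{C})$ sending $C_1$ to $C_2$. I need to build $g \in \text{Aut}_{AM}(\mathfrak{C})$ with $g \upharpoonright C_1 = h \upharpoonright C_1$. First, $A \nf_M N$ gives $A \overline{\nf}_M N$ by the definition of a closure, and by right-monotonicity of $\overline{\nf}$, both $A \overline{\nf}_M C_1$ and $A \overline{\nf}_M C_2$ hold. Applying invariance of $\overline{\nf}$ to $h$, I also get $h[A] \overline{\nf}_M C_2$. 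Since $h \upharpoonright A : A \equiv_M h[A]$, uniqueness for $\overline{\nf}$ (which is at my disposal because $\nf$ has $(E)_M$ and $(U)_M$) produces some $f: A \equiv_{MC_2} h[A]$ with $f \upharpoonright A = h \upharpoonright A$.

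To finish, I would run the same trick as in the proof of Proposition \ref{ns-equiv-def}: set $g := f^{-1} \circ h$. Then $g$ fixes $M$ (both $f$ and $h$ do), it fixes $A$ pointwise because for $a \in A$ one has $h(a) = f(a)$ so $g(a) = f^{-1}(f(a)) = a$, and on $C_1$ it agrees with $h$ because $f$ fixes $C_2 = h[C_1]$ pointwise. Hence $g: C_1 \equiv_{AM} C_2$ with $g \upharpoonright C_1 = h \upharpoonright C_1$, which is exactly the definition of $A \nes_M N$.

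The only conceptual obstacle is the initial one of recognizing that $\nf$ itself is inadequate to express what is needed (its right parameter must be a model), which is precisely why the closure formalism was introduced. Once one commits to working with $\overline{\nf}$, the argument is a verbatim copy of Lemma \ref{nsmon}, combined with the translation lemma already used in Proposition \ref{ns-equiv-def}.
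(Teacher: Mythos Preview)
Your proposal is correct and follows essentially the same argument as the paper: pass to the minimal closure (using Lemma \ref{inv-nfm} to transfer $(E)_M$ and $(U)_M$), apply monotonicity, invariance, and uniqueness of the closure, then finish with the $g := f^{-1} \circ h$ trick from Proposition \ref{ns-equiv-def}. The only difference is that you spell out the final verification explicitly, whereas the paper simply cites ``(the proof of) Proposition \ref{ns-equiv-def}''.
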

\begin{proof}
  Let $\nfm$ be the minimal closure of $\nf$. By Lemma \ref{inv-nfm}, $\nfm$ has $(E)_M$ and $(U)_M$. 

  Assume $A \nf_M N$. Let $C_1, C_2 \subseteq N$, and $h: C_1 \equiv_M C_2$. By monotonicity, $A \nfm_M C_\ell$ for $\ell = 1, 2$. By invariance, $h [A] \nfm_M C_2$. By $(U)_M$, there is $f: A \equiv_{MC_2} h[A]$ with $f \upharpoonright A = h \upharpoonright A$. By (the proof of) Proposition \ref{ns-equiv-def}, $A \nes_M N$.
\end{proof}

\begin{question}
  Is the $(E)_M$ hypothesis necessary?
\end{question}

We can also obtain a left version of Lemma \ref{right-existence}:

\begin{lem}\label{left-ext}
  Let $\nfm$ be a closure of $\nf$. Assume $\nf$ has $(E)_N$, and $\nfm$ has $(T)_{M_1}$. Suppose that $N \nf_{M_1} M_2$, with $N \succ M_1$. Then for all $N' \succ N$, there exists $N'' \equiv_N N'$ such that $N'' \nf_{M_1} M_2$.

  In particular, this holds if $\nf$ has $(E)$ and $(T)$.
\end{lem}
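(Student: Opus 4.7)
The plan is to mirror the strategy of Lemma \ref{right-existence} but go through the closure $\nfm$ and use left transitivity to bridge the base. Here is how I would organize the argument.

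First, I would fix an auxiliary model $\hat{N}$ with $N \prec \hat{N}$ and $M_2 \subseteq \hat{N}$ (such a model exists since we work inside the monster model). Applying $(E)_N$ in the form of Remark \ref{ext-conseq} with set $N'$ and the ambient model $\hat{N}$, I obtain some $N'' \equiv_N N'$ such that $N'' \nf_N \hat{N}$. Note that the automorphism witnessing $N'' \equiv_N N'$ fixes $N$, so $N \prec N''$ automatically. Since $N'' \nf_N \hat{N}$ and $N \prec \hat{N}$, the definition of a closure gives $N'' \nfm_N \hat{N}$, and by right monotonicity of $\nfm$ (using $M_2 \subseteq \hat{N}$), I conclude $N'' \nfm_N M_2$.

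Next, the hypothesis $N \nf_{M_1} M_2$ (together with $M_1 \prec M_2$) translates to $N \nfm_{M_1} M_2$. I now have the two ingredients needed for left transitivity of $\nfm$ with base $M_1$: the chain $M_1 \prec N \prec N''$, the relation $N'' \nfm_N M_2$, and the relation $N \nfm_{M_1} M_2$. Applying $(T)_{M_1}$ for $\nfm$ yields $N'' \nfm_{M_1} M_2$. Since $M_2$ is a model with $M_1 \prec M_2$, the defining property of a closure lets me read this back as $N'' \nf_{M_1} M_2$, which combined with $N'' \equiv_N N'$ is exactly what we want.

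For the ``In particular'' clause, I would apply the main part of the lemma to the minimal closure $\overline{\nf}$: by Lemma \ref{inv-nfm}(4), if $\nf$ has $(E)$ and $(T)$, then $\overline{\nf}$ inherits $(T)$, so the hypothesis $(T)_{M_1}$ on $\overline{\nf}$ is automatic.

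I do not anticipate a real obstacle. The only subtle point is choosing the bridging step correctly: one might be tempted to try to enlarge the left side $N$ directly with an extension axiom, but $(E)$ only enlarges on the right. The right perspective is to use $(E)_N$ to independently embed a copy of $N'$ over the base $N$ so that it is independent from $M_2$ over $N$, and then invoke $(T)_{M_1}$ on the closure to combine this with the given independence $N \nf_{M_1} M_2$ into independence of $N''$ from $M_2$ over $M_1$.
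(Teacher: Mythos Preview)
Your proof is correct and follows essentially the same route as the paper: pick an auxiliary model containing $N$ and $M_2$, use $(E)_N$ to realize a copy $N''$ of $N'$ independent from that model over $N$, pass to the closure to cut down to $M_2$, and apply $(T)_{M_1}$ for $\nfm$ together with the hypothesis $N \nfm_{M_1} M_2$ to conclude $N'' \nfm_{M_1} M_2$; the ``in particular'' clause via Lemma \ref{inv-nfm}(\ref{trans-nfm}) is also exactly what the paper does. The only differences are notational (your $\hat{N}$ is the paper's $N_3$) and that you spell out why $N \prec N''$, which the paper leaves implicit.
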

\begin{proof}
  The last line follows from part (\ref{trans-nfm}) of Lemma \ref{inv-nfm} by taking $\nfm$ to be the minimal closure of $\nf$.

  To see the rest, let $N_3$ be a model containing $M_2N$. By $(E)_N$, there is $N'' \equiv_N N'$ such that $N'' \nf_{N} N_3$. Since $M_2 \subseteq N_3$, $N'' \nfm_{N} M_2$. By hypothesis, $N \nfm_{M_1} M_2$. So since $\nfm$ has $(T)_{M_1}$, $N'' \nfm_{M_1} M_2$. Since $M_2 \succ M_1$, $N'' \nf_{M_1} M_2$.
\end{proof}

Finally, we can also use symmetry to translate between the transitivity properties:

\begin{lem}\label{right-trans-sym}
  Assume $\nf$ has $(S)$. Then:
  
  \begin{enumerate}
    \item If $\nf$ has $(T_\ast)_{M_0}$, then $\nf$ has $(T)_{M_0}$.
    \item If $\nf$ has $(T)_{M_0}$ and $(E)$, then it has $(T_\ast)_{M_0}$.
  \end{enumerate}
\end{lem}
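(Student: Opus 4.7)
Part (1) is the straightforward direction. Given $M_0 \prec M_1 \prec M_2$ with $M_1 \nf_{M_0} N$ and $M_2 \nf_{M_1} N$, the model-on-model clause of $(S)$ lets me flip both statements (since $M_1$ is a model extending $M_0$ and $M_2$ is a model extending $M_1$): I obtain $N \nf_{M_0} M_1$ and $N \nf_{M_1} M_2$. Now $(T_\ast)_{M_0}$ applied with left-hand set $N$ yields $N \nf_{M_0} M_2$. Since $N$ is itself a model extending $M_0$ (recall $M_0 \prec M_1 \prec N$), one final application of $(S)$ gives $M_2 \nf_{M_0} N$.

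Part (2) is the substantive direction. Given $A \nf_{M_0} M_1$ and $A \nf_{M_1} M_2$, the plan is to symmetrize both hypotheses, maneuver them into a common right-hand model so that $(T)_{M_0}$ applies, and then unsymmetrize. First, apply $(S)$ to each hypothesis to obtain $N_0 \succ M_0$ with $A \subseteq N_0$ and $M_1 \nf_{M_0} N_0$, and $N_1 \succ M_1$ with $A \subseteq N_1$ and $M_2 \nf_{M_1} N_1$. These two symmetry-witnesses are a priori unrelated, so I use $(E_1)_{M_0}$ via Lemma \ref{right-existence} to relocate everything into one ambient model: fix $P \succ N_0$ containing $N_1 \cup M_2$, and find $P' \equiv_{N_0} P$ with $M_1 \nf_{M_0} P'$, witnessed by an isomorphism $g : P \to P'$ fixing $N_0$ (and hence $A$, $M_0$, $M_1$). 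Set $M_2^\ast := g[M_2]$ and $N_1^\ast := g[N_1]$; invariance applied through $g$ promotes $M_2 \nf_{M_1} N_1$ to $M_2^\ast \nf_{M_1} N_1^\ast$, and since $g$ fixes $A$ we still have $A \subseteq N_1^\ast$, together with $M_1 \prec M_2^\ast \prec P'$ and $N_1^\ast \prec P'$.

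Right monotonicity on $M_1 \nf_{M_0} P'$ gives $M_1 \nf_{M_0} N_1^\ast$, so $(T)_{M_0}$ applied with common right-hand model $N_1^\ast$ yields $M_2^\ast \nf_{M_0} N_1^\ast$. Since $M_2^\ast$ is a model extending $M_0$, the model-on-model form of $(S)$ gives $N_1^\ast \nf_{M_0} M_2^\ast$, and left monotonicity combined with $A \subseteq N_1^\ast$ gives $A \nf_{M_0} M_2^\ast$. Finally, $g^{-1}$ fixes $A$ and $M_0$ while sending $M_2^\ast$ back to $M_2$, so invariance yields $A \nf_{M_0} M_2$.

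The main obstacle I expect is precisely the reconciliation step: the two models produced by the two independent applications of $(S)$ need not sit inside a common ambient model, and there is no direct way from symmetry alone to pair them up. This is exactly what $(E_1)$ (through Lemma \ref{right-existence}) accomplishes, by transporting one of the $(S)$-witnesses into a larger extension of the other via an isomorphism that fixes all the relevant parameters; this is why part (2), unlike part (1), needs the extension hypothesis.
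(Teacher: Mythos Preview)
Your argument for Part (1) is correct and, once unwound, is exactly what the paper does: the paper's invocation of the minimal closure here reduces to the model-on-model form of $(S)$ together with $(T_\ast)$, which is precisely your three-flip argument.

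Part (2), however, has a genuine gap. You write that the isomorphism $g$ fixing $N_0$ ``hence'' fixes $M_1$, but there is no reason $M_1 \subseteq N_0$. The symmetry property applied to $A \nf_{M_0} M_1$ only produces $N_0 \succ M_0$ with $A \subseteq N_0$ and $M_1 \nf_{M_0} N_0$; nothing places $M_1$ inside $N_0$. Once $g$ is allowed to move $M_1$, the transported statement becomes $g[M_2] \nf_{g[M_1]} g[N_1]$ rather than $M_2^\ast \nf_{M_1} N_1^\ast$, and the chain $M_0 \prec M_1 \prec M_2^\ast$ needed for $(T)_{M_0}$ is lost. Swapping which hypothesis you extend does not help: if instead you extend $M_2 \nf_{M_1} N_1$ (so the isomorphism fixes $N_1 \supseteq M_0 \cup M_1 \cup A$), you transport $N_0$ to some $N_0^\ast \prec P'$ with $M_1 \nf_{M_0} N_0^\ast$, but now you cannot guarantee $M_1 \prec N_0^\ast$, so you cannot cut $M_2 \nf_{M_1} P'$ down to $M_2 \nf_{M_1} N_0^\ast$ by right monotonicity.

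This mismatch is exactly why the paper passes to the minimal closure $\nfm$: there symmetry is the clean two-sided statement $A \nfm_M B \Leftrightarrow B \nfm_M A$, so the flipping is trivial, and the real work is pushed into Lemma \ref{inv-nfm}(\ref{trans-nfm}), which shows $(T)$ transfers from $\nf$ to $\nfm$ under $(E)$. That lemma is where the reconciliation you attempted actually happens, and its proof requires a ``large ambient model'' argument (reflecting into a big enough $V_\chi$) to guarantee that the witness for $M_1 \nfm_{M_0} C$ can be found inside the model produced by extending $M_2 \nf_{M_1} N$. Your Lemma \ref{right-existence} step is the right instinct, but it does not by itself supply enough control over where the transported witnesses land.
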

\begin{proof} Let $M_0 \prec M_1 \prec M_2$. Let $\nfm$ be the minimal closure of $\nf$. By Lemma \ref{inv-nfm}, $\nfm$ has $(S)$.
  \begin{enumerate}
    \item By Lemma \ref{nfm-arbitrary}, $\nfm$ has $(T_\ast)_{M_0}$. Now use symmetry.
    \item By part (\ref{trans-nfm}) of Lemma \ref{inv-nfm}, $\nfm$ has $(T)_{M_0}$. Now use symmetry.
  \end{enumerate}
\end{proof}

This gives us one way to obtain right transitivity for coheir:

\begin{cor}\label{coheir-right-trans}
  Assume $\ch$ has $(S)$ and $(E)$. Then $\ch$ has $(T_\ast)$.
\end{cor}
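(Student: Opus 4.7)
The plan is essentially a one-line deduction from the tools already assembled. By Proposition \ref{elem-prop-ch-ns}, the coheir relation $\ch$ always satisfies left transitivity $(T)$ (with no extra hypotheses on $K$). Together with the assumed $(S)$ and $(E)$, this places us exactly in the hypotheses of Lemma \ref{right-trans-sym}(2), which hands us $(T_\ast)$.

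So the proof I would write is: fix $M_0 \prec M_1 \prec M_2$ arbitrary and verify $(T_\ast)_{M_0}$. By Proposition \ref{elem-prop-ch-ns}, $\ch$ has $(T)_{M_0}$. By hypothesis, $\ch$ has $(S)$ and $(E)$. Apply Lemma \ref{right-trans-sym}(2) to conclude $\ch$ has $(T_\ast)_{M_0}$. Since $M_0$ was arbitrary, $\ch$ has $(T_\ast)$.

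There is no real obstacle; the work has been done in Lemma \ref{right-trans-sym}, whose proof routed through the minimal closure $\chm$ (or rather $\overline{\ch}$) and used symmetry to flip left transitivity into right transitivity. The only thing to double-check is that $(T)$ for coheir really is unconditional in Proposition \ref{elem-prop-ch-ns}, and that the form of $(E)$ invoked in Lemma \ref{right-trans-sym} matches what we are assuming here — both are straightforward from the statements. Thus the corollary follows immediately.
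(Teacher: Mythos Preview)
Your proposal is correct and matches the paper's proof essentially verbatim: invoke Proposition \ref{elem-prop-ch-ns} to get $(T)$ for $\ch$, then apply Lemma \ref{right-trans-sym}(2) using the assumed $(S)$ and $(E)$. One small terminological slip in your commentary: the closure used inside Lemma \ref{right-trans-sym} is the minimal closure $\overline{\ch}$, not the explicit closure $\chm$ defined for coheir, but this does not affect the argument.
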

\begin{proof}
  By Proposition \ref{elem-prop-ch-ns}, $\ch$ has $(T)$. Apply Lemma \ref{right-trans-sym}.
\end{proof}

Another way to obtain right transitivity from other properties appears in \cite[Claim II.2.18]{shelahaecbook}:

\begin{lem}\label{ext-uniq-trans}
  Assume $\nf$ has $(E_1)_M$ and $(U)$. Then $\nf$ has $(T_\ast)_M$.
\end{lem}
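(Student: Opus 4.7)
The plan is to use the standard first-order style argument: apply extension to rebase the independence to a conjugate, then use uniqueness to pull the conjugate back to the original tuple.

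More precisely, suppose $M = M_0 \prec M_1 \prec M_2$ with $A \nf_{M_0} M_1$ and $A \nf_{M_1} M_2$; we want $A \nf_{M_0} M_2$. First I would apply $(E_1)_{M_0}$ to the triple $A \nf_{M_0} M_1$ with the extension $M_2 \succ M_1$, obtaining some $A'$ with $A' \equiv_{M_1} A$ (say via $f \in \text{Aut}_{M_1}(\mathfrak{C})$ with $f[A] = A'$) and $A' \nf_{M_0} M_2$. By base monotonicity $(B)$, $A' \nf_{M_1} M_2$. Thus we now have two nonforking triples $A \nf_{M_1} M_2$ (the second hypothesis) and $A' \nf_{M_1} M_2$ (just produced), together with the $M_1$-conjugation $f : A \equiv_{M_1} A'$.

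Next I would apply uniqueness $(U)$ at base $M_1$ over the model $M_2$: this yields $g \in \text{Aut}_{M_2}(\mathfrak{C})$ with $g \upharpoonright A = f \upharpoonright A$, and in particular $g[A] = A'$. Because $g$ fixes $M_2$ pointwise, it also fixes $M_0$ pointwise, so $g : (A, M_0, M_2) \equiv (A', M_0, M_2)$. Invariance $(I)$ applied to $A' \nf_{M_0} M_2$ then yields $A \nf_{M_0} M_2$, which is the conclusion of $(T_\ast)_M$.

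There is no real obstacle here; the only subtlety worth checking is that the application of $(U)$ is legitimate, which it is since $(U)$ is stated for triples of the form $(A, M, N)$ with $M \prec N$, and here the base is $M_1$ and the model is $M_2 \succ M_1$. The proof requires no closure machinery, no symmetry, and no local character — it is a clean two-line use of $(E_1)$ followed by $(U)$, mirroring the first-order derivation of right transitivity from extension plus uniqueness of nonforking extensions.
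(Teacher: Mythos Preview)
Your proof is correct and matches the paper's own argument essentially line for line: apply $(E_1)_{M_0}$ to obtain $A' \equiv_{M_1} A$ with $A' \nf_{M_0} M_2$, use base monotonicity to get $A' \nf_{M_1} M_2$, apply $(U)$ at base $M_1$ to conclude $A \equiv_{M_2} A'$, and finish by invariance. The only difference is that you spell out the automorphisms $f$ and $g$ more explicitly than the paper does.
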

\begin{proof}
  Let $M_0 \prec M_1 \prec M_2$, and assume $A \nf_{M_0} M_1$ and $A \nf_{M_1} M_2$. By $(E_1)_M$, there exists $A' \equiv_{M_1} A$ such that $A' \nf_{M_0} M_2$. By base monotonicity, $A' \nf_{M_1} M_2$. By uniqueness, $A \equiv_{M_2} A'$. By invariance, $A \nf_{M_0} M_2$.
\end{proof}

\subsection{Getting symmetry}

We prove that symmetry follows from $(E)$, uniqueness and local character and deduce the main theorem of this paper (Corollary \ref{endcor-improved}). We start by assuming some stability. The following is a strengthening of unstability that is sometimes more convenient to work with:

\begin{mydef}
  Let $\alpha$ and $\lambda$ be cardinals. $K$ has the \emph{$\alpha$-order property of length $\lambda$} if there is a sequence $(\bar{a}_i)_{i < \lambda}$ of tuples, with $\ell (\bar{a}_i) = \alpha$, so that for any $i_0 < j_0 < \lambda$ and $i_1 < j_1 < \lambda$, $\bar{a}_{i_0} \bar{a}_{j_0} \not \equiv \bar{a}_{j_1} \bar{a}_{i_1}$.

  $K$ has the \emph{$\alpha$-order property} if it has the $\alpha$-order property of all lengths.

  $K$ has the \emph{order property} if it has the $\alpha$-order property for some cardinal $\alpha$.
\end{mydef}

This is a variation on the order property defined in \cite{sh394} taken from \cite{tamenessone}. It is stronger than unstability:

\begin{fact}\label{op-unstable}
  Let $\alpha$ be a cardinal. If $K$ has the $\alpha$-order property, then $K$ is $\alpha$-unstable.
\end{fact}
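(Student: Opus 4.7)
The plan is to exhibit a cardinal $\lambda \geq \text{LS}(K)$ together with a model $M \in K_\lambda$ admitting more than $\lambda$ pairwise distinct Galois $\alpha$-types, using the $\alpha$-order property to drive both an Erd\H{o}s--Rado extraction of an indiscernible subsequence and a cuts-in-a-dense-linear-order construction of the types.

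First I would fix $\chi$ very large (for instance, $\chi = \beth_{(2^{\text{LS}(K)+|\alpha|})^+}$ or above), apply the $\alpha$-order property of length $\chi$ to obtain $(\bar{a}_i)_{i < \chi}$, and color each increasing pair $\{i < j\}$ by the Galois type $\text{gtp}(\bar{a}_i \bar{a}_j)$. Since the number of $\equiv$-classes of $2\alpha$-tuples in $\mathfrak{C}$ is bounded by a Hanf-number argument, Erd\H{o}s--Rado yields a subsequence $(\bar{b}_\xi)_{\xi < \lambda^+}$ (for a suitable $\lambda \geq \text{LS}(K) + |\alpha|$) on which all increasing pairs realize a common Galois type, while the preserved inequivalence $\bar{b}_{\xi_0} \bar{b}_{\xi_1} \not\equiv \bar{b}_{\eta_1} \bar{b}_{\eta_0}$ (for $\xi_0 < \xi_1$, $\eta_0 < \eta_1$) retains the order property. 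Using the standard Ehrenfeucht--Mostowski technology for AECs over this pair-indiscernible sequence, I would then extend the data to a sequence $(\bar{c}_q)_{q \in I}$ indexed by a dense linear order $I$ containing a dense subset $I_0$ of size $\lambda$ and realizing at least $\lambda^+$ Dedekind cuts over $I_0$, with the same pair-indiscernibility and order-property behavior preserved.

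Finally I would pick any $M \in K_\lambda$ containing $\bigcup_{q \in I_0} \bar{c}_q$ and set $p_{q^*} := \text{gtp}(\bar{c}_{q^*}/M)$ for $q^* \in I \setminus I_0$. To check these $\lambda^+$ types are pairwise distinct, given $q_1^* \neq q_2^*$ lying in different cuts of $I_0$, pick $q \in I_0$ strictly between them, say $q_1^* < q < q_2^*$; any $f \in \text{Aut}_M(\mathfrak{C})$ sending $\bar{c}_{q_1^*}$ to $\bar{c}_{q_2^*}$ fixes $\bar{c}_q$, giving $\bar{c}_q \bar{c}_{q_1^*} \equiv \bar{c}_q \bar{c}_{q_2^*}$, which is impossible because the left pair is reversed while the right one is increasing in $I$, contradicting the inherited order property. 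Hence $|S^\alpha(M)| \geq \lambda^+ > \lambda$, so $K$ is not $\alpha$-stable in $\lambda$, i.e.\ $K$ is $\alpha$-unstable. The main obstacle will be the re-indexing step: unlike in the first-order setting, compactness is unavailable, so producing a densely-indexed sequence with preserved Galois-type indiscernibility and order property requires invoking the well-developed EM-model machinery for AECs (as developed in \cite{sh394} and standard references).
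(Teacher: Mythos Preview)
Your core idea --- build a sequence indexed by a linear order $\hat{I}$ containing a dense suborder $I$ of size $\lambda$, take $M$ to contain the $I$-indexed part, and use the order property to separate the types of elements lying in distinct cuts --- is exactly what the paper does. Your final paragraph matches the paper's argument almost verbatim.

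There is, however, a genuine gap in the quantification over $\lambda$. By the paper's definition, $K$ is $\alpha$-stable if it is $\alpha$-stable in \emph{some} $\lambda \ge \text{LS}(K)$; hence $\alpha$-unstable means unstable in \emph{every} such $\lambda$. Your plan explicitly sets out to ``exhibit a cardinal $\lambda$'' and then concludes ``$K$ is not $\alpha$-stable in $\lambda$, i.e.\ $K$ is $\alpha$-unstable'' --- but instability in that single $\lambda$ (the one produced by your Erd\H{o}s--Rado extraction) does not give the conclusion. The paper's proof instead begins with an \emph{arbitrary} $\lambda \ge \text{LS}(K)$, chooses linear orders $I \subseteq \hat{I}$ with $|I| \le \lambda < |\hat{I}|$ and $I$ dense in $\hat{I}$, and only then invokes Shelah's presentation theorem together with Morley's method to obtain the $\hat{I}$-indexed sequence with the order property preserved. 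This works because the $\alpha$-order property is assumed to hold for \emph{all} lengths, so for each $\lambda$ one may start from a sequence long enough for Morley's method to output the desired $\hat{I}$-indexed indiscernibles.

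A secondary remark: your separate Erd\H{o}s--Rado step to extract a pair-monochromatic subsequence is redundant. Once you are invoking the EM/Morley machinery (via Shelah's presentation theorem) to reindex by an arbitrary linear order, you obtain order-indiscernibility in the Skolemized language automatically; the paper goes directly to that step without any preliminary Ramsey-type extraction.
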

\begin{proof}[Proof sketch]
  This is \cite[Claim 4.7.2]{sh394}. Shelah's proof is ``Straight.'',  so we elaborate a little. 

  Let $\lambda \ge \text{LS} (K)$. We show $K$ is $\alpha$-unstable in $\lambda$. Let $I \subseteq \hat{I}$ be linear orderings such that $\| I\|  \le \lambda$, $\| \hat{I}\|  > \lambda$, and $I$ is dense in $\hat{I}$. Combining Shelah's presentation theorem with Morley's method, we can get a sequence $\hat{\mathbf{I}} := \left<\bar{a}_i \mid i \in \hat{I}\right>$ with $\ell (\bar{a}_i) = \alpha$ and $i_0 < j_0$, $i_1 < j_1$ implies $\bar{a}_{i_0} \bar{a}_{j_0} \not \equiv \bar{a}_{j_1} \bar{a}_{i_1}$. Let $\mathbf{I} := \left< \bar{a}_i \mid i \in I \right>$.

Now for any $i < j$ in $\hat{I}$, $\bar{a}_i \not \equiv_{\mathbf{I}} \bar{a}_j$. Indeed, pick $i < k < j$ with $k \in I$. Then $\bar{a}_i \bar{a}_k \not \equiv \bar{a}_j \bar{a}_k$ by construction, so $\bar{a}_i \not \equiv_{\bar{a}_k} \bar{a}_j$. This completes the proof that $K$ is $\alpha$-unstable in $\lambda$.
\end{proof}

We are now ready to prove symmetry. The argument is similar to  \cite[Theorem III.4.13]{shelahfobook} or \cite[Theorem 5.1]{sh48}.

\begin{thm}[Symmetry]\label{symmetry}
  Assume $\nf$ has $(E)_M$ and $\nf_M \subseteq \nes_M$. Assume in addition that $K$ does not have the order property. Then $\nf$ has $(S)_M$.
\end{thm}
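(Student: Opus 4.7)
I would argue by contradiction. Suppose $(S)_M$ fails: there exist a set $A$ and a model $N \succ M$ with $A \nf_M N$, yet $N \fork_M M'$ for every model $M' \succ M$ containing $A$. For each cardinal $\lambda$, my plan is to construct a sequence of tuples of a fixed length $\alpha \geq \text{LS}(K)$ witnessing the $\alpha$-order property of length $\lambda$. Since $\lambda$ is arbitrary, this contradicts the hypothesis that $K$ has no order property.

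The construction proceeds by transfinite induction. Fix enumerations $\bar m$, $\bar a$, $\bar n$ of $M, A, N$, and let $\alpha := \ell(\bar m) + \ell(\bar a) + \ell(\bar n)$. At stage $i < \lambda$, assume inductively we have sets $A_j$ and models $N_j$ for $j < i$, together with a continuous increasing chain of models $(\mathcal M_j)_{j \le i}$ extending $M$ with $A_k \cup N_k \subseteq \mathcal M_j$ for $k < j$. Using $(E)_M$ (via Remark \ref{ext-conseq}) choose $A_i \equiv_M A$ with $A_i \nf_M \mathcal M_i$. Fix an $M$-automorphism $\sigma_i$ of $\mathfrak C$ sending $A$ to $A_i$, and set $N_i := \sigma_i(N)$, $\bar c_i := \bar m \frown \sigma_i(\bar a) \frown \sigma_i(\bar n)$. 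By invariance of $\nf$ we obtain $A_i \nf_M N_i$ together with the transported failure: $N_i \fork_M M^*$ for every model $M^* \succ M$ containing $A_i$.

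To verify that $(\bar c_i)_{i<\lambda}$ witnesses the order property, suppose toward a contradiction that $\bar c_{i_0}\bar c_{j_0} \equiv \bar c_{j_1}\bar c_{i_1}$ for some $i_0 < j_0$ and $i_1 < j_1$. Since $\bar m$ is a common prefix, the witnessing automorphism fixes $M$ pointwise, so the equivalence actually holds over $M$, and it transports $(A_{i_0}, N_{i_0}, A_{j_0}, N_{j_0})$ to $(A_{j_1}, N_{j_1}, A_{i_1}, N_{i_1})$. The ``ascending'' relation $A_{j_0} \nf_M N_{i_0}$ holds by the construction (from $A_{j_0} \nf_M \mathcal M_{j_0}$ and $M \prec N_{i_0} \prec \mathcal M_{j_0}$, by right monotonicity). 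By invariance, this yields the ``descending'' relation $A_{i_1} \nf_M N_{j_1}$. The plan is to derive a contradiction from this by invoking $\nf_M \subseteq \nes_M$: we then have $A_{i_1} \nes_M N_{j_1}$, and combined with the transported failure $N_{j_1} \fork_M M^*$ (for $M^* \succ M$ any model containing $A_{j_1}$) applied to a carefully chosen $M^*$ containing both $A_{j_1}$ and $A_{i_1}$, the orbit-level rigidity provided by $\nes$ contradicts the would-be freeness of $A_{i_1}$ from $N_{j_1}$.

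The principal obstacle is precisely this final orientation step: turning the one-sided failure ``$N \fork_M M^*$ for every model $M^* \supseteq AM$'' into an obstruction directly visible on the two-element configuration $(A_{i_1}, N_{j_1})$ and hence preserved by arbitrary $M$-automorphisms. The hypothesis $\nf_M \subseteq \nes_M$ is what bridges this gap: explicit non-splitting converts a forking statement against an auxiliary ambient model into a rigidity statement about $M$-orbits of subsets, and it is exactly this rigidity that blocks any hypothetical ``reversed'' freeness $A_{i_1} \nf_M N_{j_1}$. Once the order property of unbounded length is secured, it contradicts the no-order-property hypothesis, completing the proof.
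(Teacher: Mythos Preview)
Your outline has the right shape (build a long sequence by repeatedly applying $(E)_M$, use $\nf_M \subseteq \nes_M$ to extract an order-type dichotomy), but the final step is a genuine gap, not just a technicality, and with your construction it cannot be filled.

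Concretely: from the hypothetical automorphism you obtain $A_{i_1} \nf_M N_{j_1}$ with $i_1 < j_1$, and you want this to be impossible. But nothing in your construction forces $A_{i_1} \fork_M N_{j_1}$. You only arranged $A_i \nf_M \mathcal M_i$; the model $N_{j_1} = \sigma_{j_1}(N)$ was produced \emph{after} $\mathcal M_{i_1}$ and has no independence relationship to it. The ``transported failure'' you invoke says $N_{j_1} \fork_M M^\ast$ for all $M^\ast \succ M$ containing $A_{j_1}$ --- a statement about $A_{j_1}$, not $A_{i_1}$. The relation $A_{i_1} \nes_M N_{j_1}$ only lets you swap $M$-conjugate subsets \emph{of $N_{j_1}$}; since you do not know $A_{i_1}, A_{j_1} \subseteq N_{j_1}$, this gives you nothing to work with. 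So for particular choices of $\sigma_{j_1}$ it is entirely possible that $A_{i_1} \nf_M N_{j_1}$ holds, and no contradiction ensues.

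The missing idea, which the paper supplies, is to make \emph{both} sides independent along the chain. In addition to choosing $A_\alpha \nf_M M_\alpha'$ (with $A_\alpha \equiv_N A$, not merely $\equiv_M A$), one also uses $(E)_M$ to choose $N_\alpha \equiv_M N$ with $N_\alpha \nf_M M_\alpha$. Then $\nes$ is available in both directions: $A_\beta \nes_M M_\beta'$ lets you swap $N$ and $N_\alpha$ (both inside $M_\beta'$) when $\alpha \le \beta$, while $N_\alpha \nes_M M_\alpha$ lets you swap $A$ and $A_\beta$ (both inside $M_\alpha$) when $\beta < \alpha$. Phrasing the failure of symmetry via the minimal closure as $N \nnfm_M A$ then yields a clean, invariant dichotomy: $(A,N) \equiv_M (A_\beta, N_\alpha)$ if and only if $\beta \ge \alpha$, from which the order property follows directly. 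In your version, taking $N_i := \sigma_i(N)$ with $\sigma_i$ the automorphism realizing $A_i \equiv_M A$ loses exactly the control over $N_i$ that makes the argument go through.
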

\begin{proof}
  Let $\nfm$ be the minimal closure of $\nf$. Recall that by Lemma \ref{inv-nfm}, $\nf$ has $(S)_M$ if and only if $\nfm$ has $(S)_M$.
  
 Assume for a contradiction $\nf$ does not have $(S)_M$. Pick $A$ and $M \prec N$ such that $A \nf_M N$, but $N \nnfm_M A$. Let $\lambda$ be an arbitrary uncountable cardinal. We will show that $K$ has the $(\| N\|  + |A|)$-order property of length $\lambda$. This will contradict the assumption that $K$ does not have the order property.

We will build increasing continuous $\seq{M_\alpha \in K: \alpha < \lambda}$, and $\seq{A_\alpha, M_\alpha', N_\alpha : \alpha < \lambda}$ by induction so
\begin{enumerate}
	\item $M_0 \succ N$ and $A \subseteq |M_0|$.
	\item $N_\alpha \equiv_M N$ and $N_\alpha \prec M_\alpha'$.
	\item $A_\alpha \equiv_N A$ and $A_\alpha \subseteq M_{\alpha+1}$.
	\item $M_\alpha \prec M'_\alpha \prec M_{\alpha+1}$.
	\item $N_\alpha \nf_M M_\alpha$ and $A_\alpha \nf_M M'_\alpha$.
\end{enumerate}

\paragraph{This is possible}

Let $M_0$ be any model containing $AN$. At $\alpha$ limits, let $M_\alpha := \bigcup_{\beta < \alpha} M_\beta$. Now assume inductively that $M_\beta$ has been defined for $\beta \le \alpha$, and $A_\beta$, $N_\beta$, $M_\beta'$ have been defined for $\beta < \alpha$. Use $(E)_M$ to find $N_\alpha \equiv_M N$ with $N_\alpha \nf_M M_\alpha$. Now pick $M_\alpha' \ge M_\alpha$ containing $N_\alpha$. Now, by $(E)_M$ again, find $A_\alpha \equiv_N A$ with $A_\alpha \nf_M M_\alpha'$. Pick $M_{\alpha + 1} \succ M_{\alpha}$ containing $A_\alpha$ and $M_\alpha'$.

\paragraph{This is enough}

We show that for $\alpha, \beta < \lambda$:

\begin{enumerate}
  \item If $\beta < \alpha$, $(A,N) \not \equiv_M (A_\beta, N_\alpha)$.
  \item If $\beta \ge \alpha$, $(A, N) \equiv_M (A_\beta, N_\alpha)$.
\end{enumerate}

For (1), suppose $\beta < \alpha$.  Since $A \subseteq M \prec M_\alpha$, we have $N_\alpha \nfm_M A$.  Then we can use the invariance of $\nfm$ and the assumption of no symmetry to conclude $(A, N_\alpha) \not \equiv_M (A,N)$.  On the other hand, we know that $N_\alpha \nes_M M_\alpha$.  Since $A, A_\beta \subseteq M_\alpha$ and  $A \equiv_M A_\beta$, we must have $(A,N_\alpha) \equiv_M (A_\beta, N_\alpha)$.  Thus $(A, N) \not \equiv_M (A_\beta, N_\alpha)$.

To see (2), suppose $\beta \geq \alpha$ and recall that $(A,N) \equiv_M (A_\beta, N)$.  We also have that $A_\beta \nes_M M'_{\beta}$. $N \equiv_M N_\alpha$ and $N, N_\alpha \subseteq M_\beta'$, the definition of non explicit splitting implies that $(A_\beta, N) \equiv_M (A_\beta, N_\alpha)$.  This gives us that $(A, N) \equiv_M (A_\beta, N_\alpha)$ as desired.

\end{proof}

\begin{remark}
  The same proof can be used to obtain symmetry in the good frame framework. This is used in the construction of a good frame of \cite{ss-tame-toappear-v3}.
\end{remark}

\begin{cor}\label{ext-uniq-sym}
  Assume $K$ does not have the order property. Assume $\nf$ has $(E)_M$ and $(U)_M$. Then $\nf$ has $(S)_M$.
\end{cor}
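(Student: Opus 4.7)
The plan is to combine Theorem \ref{symmetry} with Lemma \ref{uniq-nsa} in the obvious way. Theorem \ref{symmetry} already gives $(S)_M$ from the two hypotheses ``$K$ has no order property'' and ``$\nf_M \subseteq \nes_M$'' (together with $(E)_M$). The first of these is assumed here directly, so the only thing left to verify is that $\nf_M \subseteq \nes_M$.

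For the containment $\nf_M \subseteq \nes_M$, I would simply invoke Lemma \ref{uniq-nsa}, whose hypotheses are exactly $(E)_M$ and $(U)_M$ for $\nf$, both of which are given. This is the only nontrivial input, and the proof of Lemma \ref{uniq-nsa} is itself short: it passes to the minimal closure $\nfm$, uses Lemma \ref{inv-nfm} to transfer $(E)_M$ and $(U)_M$ from $\nf$ to $\nfm$, and then unfolds the definition of explicit nonsplitting using Proposition \ref{ns-equiv-def}. There is no additional work required at this stage.

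Having established $\nf_M \subseteq \nes_M$, I would apply Theorem \ref{symmetry} to $\nf$, with the hypothesis on the order property coming from the assumption of the corollary. This yields $(S)_M$ and completes the proof. There is no real obstacle here; the work has all been done in the preceding lemma and theorem, and the corollary is essentially just packaging them together into the form in which it will later be used (in the proof of Corollary \ref{endcor-improved}).
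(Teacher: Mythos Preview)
Your proposal is correct and follows exactly the same approach as the paper: invoke Lemma \ref{uniq-nsa} (using $(E)_M$ and $(U)_M$) to obtain $\nf_M \subseteq \nes_M$, then apply Theorem \ref{symmetry}.
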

\begin{proof}
  By Lemma \ref{uniq-nsa}, $\nf_M \subseteq \nes_M$. Now apply Theorem \ref{symmetry}.
\end{proof}

If in addition we assume local character, we obtain the ``no order property'' hypothesis:

\begin{lem}\label{local-nop}
  Assume $\nf$ has $(U)$ and $(L)$ (or just $\kappa_1 (\nf) < \infty$). Then $K$ is $\alpha$-stable for all $\alpha$. In particular, it does not have the order property.
\end{lem}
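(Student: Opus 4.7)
The plan is to prove stability first, and then derive the other two conclusions via the cited facts. Set $\kappa := \kappa_1(\nf)$, which is a regular cardinal by hypothesis. Once $K$ is known to be stable, Fact \ref{stab-longtypes} upgrades this to $\alpha$-stability for every $\alpha$ (by choosing a stability cardinal $\lambda$ closed under exponentiation by $\alpha$), and the contrapositive of Fact \ref{op-unstable} then immediately yields the ``no order property'' conclusion.

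To prove stability, I would show that $K$ is stable in every sufficiently large strong limit cardinal $\mu$ of cofinality at least $\kappa$. Given such $\mu$ and any $M \in K_\mu$, write $M = \bigcup_{i < \cf \mu} M_i$ as a strictly increasing continuous chain with $\|M_i\| < \mu$. Since $\cf \mu \ge \kappa$ is regular, the local character property applies: for any tuple $a$ there is $i_a < \cf \mu$ with $a \nf_{M_{i_a}} M$. This is where $\kappa_1(\nf) < \infty$ enters.

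Now $(U)$ reduces counting $\text{gtp}(a/M)$ to counting its non-forking ancestors: if $a$ and $b$ satisfy $a \nf_{M_{i}} M$, $b \nf_{M_i} M$, and $\text{gtp}(a/M_i) = \text{gtp}(b/M_i)$, then $(U)$ supplies an automorphism of $\mathfrak{C}$ fixing $M$ and sending $a$ to $b$, whence $\text{gtp}(a/M) = \text{gtp}(b/M)$. So $\text{gtp}(a/M) \mapsto (i_a, \text{gtp}(a/M_{i_a}))$ is well-defined and injective, giving $|S(M)| \le \cf \mu \cdot \sup_{i < \cf \mu} |S(M_i)|$. Using the standard AEC bound $|S(M_i)| \le 2^{\|M_i\| + \text{LS}(K)} \le 2^{<\mu} = \mu$ (the last equality because $\mu$ is strong limit), we conclude $|S(M)| \le \mu$. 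Arbitrarily large $\mu$ of the required form exist (for instance $\mu = \beth_\delta$ with $\cf \delta \ge \kappa$), so $K$ is stable.

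The main obstacle is really just bookkeeping: the cardinal $\mu$ must simultaneously have large enough cofinality (for $(L)$) and satisfy $2^{<\mu} = \mu$ (to close the counting argument), and to bootstrap to $\alpha$-stability via Fact \ref{stab-longtypes} we additionally need $\mu^\alpha = \mu$. Taking $\mu$ a strong limit with $\cf \mu \ge \kappa + \alpha^+$ handles all three at once. Once stability is in hand, extending to arbitrary $\alpha$ and ruling out the order property are immediate appeals to Facts \ref{stab-longtypes} and \ref{op-unstable}.
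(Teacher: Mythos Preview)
Your proposal is correct and follows essentially the same route as the paper: choose a strong limit cardinal of cofinality at least $\kappa_1(\nf)$ (the paper takes $\lambda = \beth_{\alpha^+}$ explicitly), use local character to drop each $1$-type to a small submodel, use $(U)$ to bound the number of types over $M$ by the count over the small submodels, and then invoke Fact~\ref{stab-longtypes} and Fact~\ref{op-unstable}. The only quibble is that your map $\text{gtp}(a/M) \mapsto (i_a, \text{gtp}(a/M_{i_a}))$ is not literally well-defined on types (since $i_a$ depends on the realization), but choosing a representative for each type---or arguing by contradiction via pigeonhole as the paper does---fixes this immediately and your cardinality bound stands.
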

\begin{proof}
  That $\alpha$-stability implies no $\alpha$-order property is the contrapositive of Fact \ref{op-unstable}. Now, assume $(U)$ and let $\mu := \kappa_1 (\nf) < \infty$. Fix a cardinal $\alpha \ge 1$. We want to see $K$ is $\alpha$-stable. By Remark \ref{monot-stab}, we can assume without loss of generality $\alpha \ge \mu + \text{LS} (K)$.

  Let $\lambda := \beth_{\alpha^+}$. Then:

  \begin{enumerate}
    \item $\lambda$ is strong limit.
    \item $\cf (\lambda) = \alpha^+ > \mu + \text{LS} (K)$.
    \item $\lambda^\alpha = \sup_{\gamma < \lambda} \gamma^\alpha = \lambda$.
  \end{enumerate}

  We claim that $K$ is $\alpha$-stable in $\lambda$. By Fact \ref{stab-longtypes}, it is enough to see it is $1$-stable in $\lambda$. Suppose not. Then there exists $M \in K_\lambda$, and $\{a_i\}_{i < \lambda^+}$ such that $i < j$ implies $a_i \not \equiv_M a_j$. Let $(M_i)_{i < \lambda}$ be increasing continuous such that $M = \bigcup_{i < \lambda} M_i$ and $\| M_i\|  < \lambda$. By definition of $\mu$, for each $i < \lambda^+$, there exists $k_i < \lambda$ such that $a_i \nf_{M_{k_i}} M$. By the pigeonhole principle, we can shrink $\{a_i\}_{i < \lambda^+}$ to assume without loss of generality that $k_i = k_0$ for all $i < \lambda^+$. Since there are at most $2^{\| M_{k_0}\| } < \lambda$ many types over $M_{k_0}$, there exists $i < j < \lambda^+$ such that $a_i \equiv_{M_{k_0}} a_j$. By uniqueness, $a_i \equiv_M a_j$, a contradiction.
\end{proof}
\begin{cor}\label{sym-trans}
  Assume $\nf$ has $(E)_M$, $(U)$ and $(L)$ (or just $\kappa_1 (\nf) < \infty$). Then $\nf$ has $(S)_M$ and $(T_\ast)_M$.
\end{cor}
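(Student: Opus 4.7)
The proof should be a short application of the machinery already assembled in this section. The hypotheses $(U)$ and $(L)$ (or the weaker $\kappa_1(\nf) < \infty$) are exactly what Lemma \ref{local-nop} needs, so my first step is to invoke it to conclude that $K$ does not have the order property. Note that $(U)$ here is the global property, so in particular $(U)_M$ holds, and similarly $(E)_M$ yields $(E_1)_M$.

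Next, I would obtain symmetry. Since $K$ does not have the order property, and we have $(E)_M$ and $(U)_M$ in hand, Corollary \ref{ext-uniq-sym} applies directly to give $(S)_M$. (One could equivalently apply Lemma \ref{uniq-nsa} to get $\nf_M \subseteq \nes_M$ and then Theorem \ref{symmetry}, but Corollary \ref{ext-uniq-sym} packages this.)

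Finally, for right transitivity, I would appeal to Lemma \ref{ext-uniq-trans}: from $(E_1)_M$ and $(U)$ it delivers $(T_\ast)_M$ with no further work. There is no real obstacle in this corollary — it is purely a bookkeeping assembly of Lemma \ref{local-nop}, Corollary \ref{ext-uniq-sym} and Lemma \ref{ext-uniq-trans}. The conceptual content lies in Theorem \ref{symmetry} (the order-property argument) and Lemma \ref{local-nop} (the stability-from-local-character argument), both already proved above; the only thing worth double-checking is that the parametrization matches, i.e.\ that we indeed have $(U)_M$ (and not some weaker localized fragment) available when invoking Corollary \ref{ext-uniq-sym} and Lemma \ref{ext-uniq-trans}, which is clear since $(U)$ is assumed globally.
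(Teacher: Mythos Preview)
Your proposal is correct and matches the paper's own proof essentially line for line: the paper invokes Lemma \ref{ext-uniq-trans} for $(T_\ast)_M$, and combines Lemma \ref{local-nop} with Corollary \ref{ext-uniq-sym} for $(S)_M$, exactly as you do.
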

\begin{proof}
  Lemma \ref{ext-uniq-trans} gives $(T_\ast)_M$. Combine Lemma \ref{local-nop} and Corollary \ref{ext-uniq-sym} to obtain $(S)_M$.
\end{proof}

Thus we obtain another version of the canonicity theorem:

\begin{cor}[Canonicity of forking]\label{endcor-improved}
  Let $\1nf$ and $\2nf$ be independence relations. Assume:
  \begin{itemize}
    \item $(E^{(1)})_M, (U^{(1)})_M$.
    \item $(E^{(2)})_M, (U^{(2)}), (L^{(2)})$.
  \end{itemize}

  Then $\1nf_M = \2nf_M$.  

  In particular, there can be at most one independence relation satisfying existence, extension, uniqueness, and local character.
\end{cor}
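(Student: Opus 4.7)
The plan is to reduce Corollary \ref{endcor-improved} directly to the earlier Corollary \ref{endcor}, by showing that the hypotheses $(S^{(2)})$ and $(T_\ast^{(2)})_M$ missing from the current hypothesis list are already consequences of what we are assuming about $\2nf$. The two corollaries differ exactly by these two properties, and we have developed enough machinery in Section \ref{relations-properties} to fill the gap.

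First I would apply Corollary \ref{sym-trans} to $\2nf$. Its hypotheses are $(E)_M$, $(U)$, and $(L)$ (or even just $\kappa_1 (\nf) < \infty$), all of which are exactly what we are assuming about $\2nf$ in the statement. The conclusion supplies both $(S^{(2)})_M$ and $(T_\ast^{(2)})_M$. (Under the hood, $(T_\ast^{(2)})_M$ is obtained from Lemma \ref{ext-uniq-trans} using $(E_1^{(2)})_M$ and $(U^{(2)})$, while $(S^{(2)})_M$ comes from combining Lemma \ref{local-nop}, which extracts ``no order property'' from $(U^{(2)})$ and $(L^{(2)})$, with Corollary \ref{ext-uniq-sym}, which produces symmetry from $(E)_M$, $(U)_M$, and the absence of the order property via Theorem \ref{symmetry}.) Note also that $(U^{(2)})$ and $(L^{(2)})$ trivially imply $(U^{(2)})_M$ and a fortiori the local character hypothesis in the form used by Corollary \ref{endcor}.

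Having accumulated $(E^{(2)})_M, (U^{(2)})_M, (L^{(2)}), (S^{(2)})_M$, and $(T_\ast^{(2)})_M$ for $\2nf$, together with the assumed $(E^{(1)})_M, (U^{(1)})_M$ for $\1nf$, the hypotheses of Corollary \ref{endcor} are satisfied at $M$. Applying that corollary yields $\1nf_M = \2nf_M$, which is the first conclusion. For the ``in particular'' clause, assume both $\1nf$ and $\2nf$ satisfy existence, extension, uniqueness, and local character globally. Then the hypotheses of the corollary hold at every base model $M$, so $\1nf_M = \2nf_M$ for every $M$, and hence $\1nf = \2nf$ as sets of triples.

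There is no real obstacle to overcome at this stage; the main work has been done in Section \ref{relations-properties}. The substantive step was Theorem \ref{symmetry} (symmetry from no order property, existence, and containment in $\nes$), combined with Lemma \ref{local-nop} showing that uniqueness plus local character forbids the order property; that is where the proof of Corollary \ref{endcor-improved} really lives, and once it is in hand the present statement is a one-line combination of Corollaries \ref{endcor} and \ref{sym-trans}.
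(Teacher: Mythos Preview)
Your proposal is correct and matches the paper's proof exactly: the paper's argument is literally the one-liner ``Combine Corollaries \ref{endcor} and \ref{sym-trans}'', which is precisely the reduction you describe, and your parenthetical unpacking of how Corollary \ref{sym-trans} works (via Lemmas \ref{ext-uniq-trans} and \ref{local-nop} and Corollary \ref{ext-uniq-sym}) is also accurate.
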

\begin{proof}
  Combine Corollaries \ref{endcor} and \ref{sym-trans}.
\end{proof}

\section{Applications}\label{last-section}

\subsection{Canonicity of coheir}

Fix a regular $\kappa > \text{LS} (K)$. Below, when we say coheir has a given property, we mean that it has that property for base models in $K_{\ge \kappa}$.

We are almost ready to show that coheir is canonical, but we first need to show it has local character. We will use the following strengthening that deals with subsets instead of chains of models:

\begin{mydef}
  Let $\nf$ be an independence relation. For $\alpha$ a cardinal, let $\bkappa_\alpha = \bkappa_\alpha (\nf)$ be the smallest cardinal such that for all $N$, and all $A$ with $|A| = \alpha$, there exists $M \le N$ with $\| M\|  < \bkappa_\alpha$ and $A \nf_M N$. $\bkappa_\alpha = \infty$ if there is no such cardinal.
\end{mydef}

\begin{remark}
  For all $\alpha$, $\kappa_\alpha(\nf) \leq \bkappa_\alpha(\nf)^+$. Thus $\bkappa_\alpha (\nf) < \infty$ implies $\kappa_\alpha (\nf) < \infty$.
\end{remark}
\begin{remark}
  The converse is also true: under some reasonable hypotheses, $\kappa_\alpha(\nf) < \infty$ implies $\bkappa_\alpha(\nf) < \infty$. This appears as \cite[Proposition 2.15]{bv-sat-v3} (circulated after the initial submission of this paper).
\end{remark}

\begin{thm}[Local character for coheir]\label{nflocal}
  Assume $\ch$ has $(S)$. Then $\bkappa_{\alpha} (\ch) \le ((\alpha + 2)^{<\kappa})^+$. In particular, $\ch$ has $(L)$.
\end{thm}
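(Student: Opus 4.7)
Set $\chi := (\alpha + 2)^{<\kappa}$. Given $A$ with $|A| = \alpha$ and a model $N$, I want to produce $M \prec N$ of size at most $\chi$ with $A \ch_M N$. The plan is to use the symmetry hypothesis to dualize the target and then build $M$ by a closure construction of length $\kappa$. Two applications of $(S)$ will reduce the problem: first, $A \ch_M N$ is equivalent to the existence of a model $M' \succ M$ with $A \subseteq M'$ and $N \ch_M M'$; second, applying $(S)$ to $N \ch_M M'$ produces $N^* \succ N$ with $M \prec N^*$ and $M' \ch_M N^*$, so left monotonicity (using $A \subseteq M'$) followed by right monotonicity (using $N \prec N^*$) will recover $A \ch_M N$. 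Via the closure $\chm$, the condition $N \ch_M M'$ amounts to: for every $(<\kappa)$-sized $N^- \subseteq N$ and every $(<\kappa)$-sized $C \subseteq M'$, some $B^- \subseteq M$ satisfies $B^- \equiv_C N^-$.

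First I would reduce to the case $|A| < \kappa$: since $A \ch_M N$ unpacks to $A^- \ch_M N$ for every $(<\kappa)$-sized $A^- \subseteq A$ and there are at most $\alpha^{<\kappa} \le \chi$ such $A^-$, the L\"owenheim--Skolem closure inside $N$ of the union of witnesses $M^{A^-}$ stays of size at most $\chi$, and base monotonicity $(B)$ promotes the component coheirs to a joint one. Assuming $|A| < \kappa$, I would build a continuous chain $\seq{(M_i, M'_i) : i \le \kappa}$ with $M_i \prec N$, $A \subseteq M'_i$, $M_i \prec M'_i$, and $\|M_i\|, \|M'_i\| \le \chi$. At successor step $i+1$, I would extend $M_i$ inside $N$ to $M_{i+1}$ that contains, for each $(<\kappa)$-sized $C \subseteq M'_i$ and each Galois type of a $(<\kappa)$-tuple over $C$ realized in $N$, one representative from $N$; then extend $M'_i$ to $M'_{i+1}$ correspondingly. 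Regularity of $\kappa$ together with $\chi^{<\kappa} = \chi$ maintain the size bounds, and continuity ensures that any $(<\kappa)$-sized $C \subseteq M' := M'_\kappa$ lies in some $M'_j$ with $j < \kappa$, so the required witnesses live in $M_{j+1} \subseteq M := M_\kappa$, giving $N \chm_M M'$ and hence $N \ch_M M'$. The dualization then delivers $A \ch_M N$.

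The hardest part will be the size bookkeeping at the successor step: without a stability hypothesis, the number of Galois types over a $(<\kappa)$-sized parameter set realized in $N$ by $(<\kappa)$-tuples need not be bounded by $\chi$, so naively ``realize every such type'' will overshoot the $\chi$ budget. The resolution must exploit the reduction to $|A| < \kappa$, the fact that only one representative per type is needed, and the identity $\chi^{<\kappa} = \chi$ to confirm that the number of additions per stage stays at most $\chi$. The ``in particular, $\ch$ has $(L)$'' clause then follows immediately from the inequality $\kappa_\alpha(\ch) \le \bkappa_\alpha(\ch)^+$ recorded just before the theorem statement.
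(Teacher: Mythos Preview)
Your overall architecture matches the paper's: build an increasing continuous pair of chains $(M_i, M_i')$ with $M_i \prec N$, $A \subseteq M_i'$, $M_i \prec M_{i+1}'$ (the paper writes $N_i$ for your $M_i'$), arrange that $N \chm_{M_i} M_i'$ at each stage, pass to a limit of cofinality $\ge \kappa$ so that Lemma~\ref{ch-cont-chain} gives $N \chm_{M} M'$ (hence $N \ch_M M'$, since $M \prec M'$), and then a single application of $(S)$ plus monotonicity yields $A \ch_M N$. The paper runs the chain for $\mu = (\alpha + 2)^{<\kappa}$ steps rather than $\kappa$, but either works.

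There is, however, a genuine gap precisely where you flag it. Your proposed resolution of the successor-step bookkeeping does not succeed. The reduction to $|A| < \kappa$ is irrelevant to the difficulty: what must be bounded is the number of small Galois types over each small $C \subseteq M_i'$ that are realized in $N$, and $M_i'$ has size up to $\chi$ regardless of $|A|$. Neither ``one representative per type'' nor $\chi^{<\kappa} = \chi$ bounds that count; they only help \emph{after} you know there are at most $\chi$ types to realize, which is exactly the missing step.

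The paper resolves this by isolating it as a separate dual-local-character statement, Lemma~\ref{ch-dual-loc}: for any $N$ and any set $C$ there is $M \prec N$ with $\|M\| \le (|C|+2)^{<\kappa} + \text{LS}(K)$ and $N \chm_M C$. The substance of that lemma is the claim that over a fixed small parameter set $C^-$ there are at most $2^{<\kappa}$ small Galois types realized in $N$; collecting one realization of each over the $|C|^{<\kappa}$ choices of $C^-$ gives the bound. Once Lemma~\ref{ch-dual-loc} is available, the successor step is immediate (apply it with $C = M_i'$), and $\mu^{<\kappa} = \mu$ keeps the sizes in check. This type-count is the idea your sketch is missing. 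The preliminary reduction to $|A| < \kappa$ is not in the paper and is unnecessary.
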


The proof is similar to that of \cite[Theorem 1.6]{adler-rank}\footnote{After proving the result, we noticed that a similar argument also appears in the proof of $(B)_\mu$ in \cite[Proposition 4.8]{makkaishelah}.}. The key is that $\chm$ always satisfies a dual to local character:

\begin{lem}\label{ch-dual-loc}
  Let $N, C$ be given. Then there is $M \le N$, $\| M\|  \le (|C| + 2)^{<\kappa} + LS(K)$ such that $N \chm_M C$.
\end{lem}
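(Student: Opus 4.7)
The plan is to construct $M$ via an iterative closure argument inside $N$, exploiting that any $B^-$ with $B^- \equiv_{C^-} A^-$ can be chosen from $N$ itself---indeed, $B^- = A^-$ works whenever $A^- \subseteq M$. Set $\theta := (|C|+2)^{<\kappa} + \text{LS}(K)$. The number of small subsets of $C$ is at most $(|C|+2)^{<\kappa} \leq \theta$, so enumerate them as $(C^-_i)_{i < \lambda}$ with $\lambda \leq \theta$.

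I would then build an increasing continuous elementary chain $(M_\alpha)_{\alpha \leq \lambda}$ inside $N$ with $\|M_\alpha\| \leq \theta$, starting from some $M_0 \prec N$ of size $\text{LS}(K)$. At each successor stage $\alpha+1$, extend $M_\alpha$ to $M_{\alpha+1} \prec N$ which realizes every $\equiv_{C^-_\alpha}$-equivalence class of small tuples from $N$: for each such class, pick one small representative $B^- \subseteq N$, adjoin it, then close under Löwenheim-Skolem. Take unions at limits.

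Setting $M := M_\lambda$, for any small $A^- \subseteq N$ and $C^- \subseteq C$, choose $\alpha$ with $C^-_\alpha = C^-$; by construction $M_{\alpha+1} \subseteq M$ contains some $B^- \equiv_{C^-} A^-$. This yields $N \chm_M C$, and the chain structure guarantees $\|M\| \leq \theta$ provided the successor stages are controlled.

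The central obstacle is precisely bounding $\|M_{\alpha+1}\|$: a priori, the number of $\equiv_{C^-_\alpha}$-classes of small subsets of $N$ could exceed $\theta$, since without stability or tameness hypotheses we lack an off-the-shelf bound on the size of Galois type spaces over small sets. Following the pattern of Adler's proof for first-order local character, I expect the resolution to be an inner iterated closure within stage $\alpha$: starting from $M_\alpha$, repeatedly adjoin a witness $B^-$ for one pair $(A^-, C^-_\alpha)$ currently uncovered, and argue via cofinality/pigeonhole on small subsets of $N$ that this stabilizes after at most $\theta$-many inner steps. The fact that witnesses are taken from $N$ itself, together with the usual arithmetic $\theta^{<\kappa} = \theta$, should make the bookkeeping go through, but this is the technically delicate point of the argument.
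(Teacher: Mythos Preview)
The paper's argument is a single-step construction, not a chain: for each of the $\le |C|^{<\kappa}$ small $C^- \subseteq C$, collect inside $N$ one realization of each $\equiv_{C^-}$-class of small subsets of $N$; the paper asserts there are at most $2^{<\kappa}$ such classes per $C^-$, so the total number of witnesses is at most $|C|^{<\kappa} \cdot 2^{<\kappa} \le (|C|+2)^{<\kappa}$, and any $M \prec N$ containing them all works. Your outer chain indexed by the $C^-_i$ is therefore superfluous---the witnesses for all $C^-$ can be gathered at once and $M$ chosen in a single application of L\"owenheim--Skolem.

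You are right that bounding the number of $\equiv_{C^-}$-classes is the entire content, but your proposed ``inner iterated closure'' does not supply that bound. If there were more than $\theta$ Galois types of small tuples over some $C^-_\alpha$ realized in $N$, an iteration of length $\theta$ adding one witness per step cannot exhaust them, and there is no cofinality or pigeonhole mechanism that forces stabilization: the obstruction is a raw count of type-classes, not the size of individual witnesses, so the arithmetic $\theta^{<\kappa} = \theta$ is beside the point. The paper simply takes the $2^{<\kappa}$ bound as given (it is transparent in the syntactic settings of the cited Adler and Makkai--Shelah references, where types are sets of formulas over a parameter set of size $<\kappa$); your extra machinery neither avoids this step nor furnishes an independent argument for it.
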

\begin{proof}[Proof sketch]
  For each of the $|C|^{<\kappa}$ small subsets of $C$, look at the $\le 2^{<\kappa}$ small types over that set (realized in $N$), and collect a realization of each in a set $A \subseteq |N|$.  Then pick $M \prec N$ to contain $A$ and be of the appropriate size.
\end{proof}

We will also use the following application of the fact $\chm$ has $(C)_\kappa$ and a strong form of base monotonicity.

\begin{lem}\label{ch-cont-chain}
Let $\lambda$ be such that $\cf \lambda \geq \kappa$. Let $(A_i)_{i < \lambda}$, $(M_i)_{i < \lambda}$, $(C_i)_{i < \lambda}$ be (not necessarily strictly) increasing chains. Assume $A_i \chm_{M_i} C_i$ for all $i < \lambda$. Let $A_\lambda := \bigcup_{i < \lambda} A_i$, and define $M_\lambda$, $C_\lambda$ similarly. Then $A_\lambda \chm_{M_\lambda} C_\lambda$.
\end{lem}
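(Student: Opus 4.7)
The plan is to unfold the definition of $\chm$ and check it directly at stage $\lambda$, using only two ingredients: the fact that $\chm$ is defined by small witnesses (which is essentially $(C)_\kappa$ for $\chm$), and a strong form of base monotonicity for $\chm$ which holds by inspection of the definition --- namely, if $A \chm_M C$ and $M \subseteq M'$, then $A \chm_{M'} C$, because any $B^- \subseteq M$ witnessing the condition is also a subset of $M'$ (no requirement that $M' \subseteq M \cup C$).

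First, I would fix small $A^- \subseteq A_\lambda$ and $C^- \subseteq C_\lambda$, and aim to produce $B^- \subseteq M_\lambda$ with $B^- \equiv_{C^-} A^-$. Since $|A^-|, |C^-| < \kappa \le \cf \lambda$ and the chains $(A_i)_{i < \lambda}$, $(C_i)_{i < \lambda}$ are increasing with unions $A_\lambda$, $C_\lambda$, there is some $i < \lambda$ such that $A^- \subseteq A_i$ and $C^- \subseteq C_i$. This is exactly the place where the hypothesis $\cf \lambda \ge \kappa$ enters.

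Now invoke the hypothesis $A_i \chm_{M_i} C_i$ applied to the small subsets $A^- \subseteq A_i$ and $C^- \subseteq C_i$: this produces $B^- \subseteq M_i$ with $B^- \equiv_{C^-} A^-$. Since $M_i \subseteq M_\lambda$, we have $B^- \subseteq M_\lambda$, which is the conclusion we needed. Hence $A_\lambda \chm_{M_\lambda} C_\lambda$, completing the proof.

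There is no real obstacle here; the argument is a short chase through definitions. The only subtlety is bookkeeping --- one must not attempt to use properties like $(U)$ or $(T)$, since the lemma is meant to apply in complete generality to the closure $\chm$. The cofinality assumption is used in exactly one place, to coalesce the small witnessing sets at a single index $i$.
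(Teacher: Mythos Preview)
Your proof is correct and essentially matches the paper's argument. The paper's proof is two lines: first observe $A_i \chm_{M_\lambda} C_i$ for all $i$ (the strong base monotonicity you identified), then invoke $(C)_\kappa$ for $\chm$; you simply unfold these two ingredients directly at the level of the defining condition, which amounts to the same thing.
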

\begin{proof}
  From the definition of $\chm$, we see that for all $i < \lambda$, $A_i \chm_{M_\lambda} C_i$. Now use the fact that $\chm$ has $(C)_\kappa$ (Proposition \ref{elem-prop-ch-ns}).
\end{proof}

\begin{proof}[Proof of Theorem \ref{nflocal}]
  Fix $\alpha$, and let $A$ and $N$ be given with $|A| = \alpha$. Let $\mu := (|A| + 2)^{<\kappa}$. Inductively build $(M_i)_{i \le \mu}, (N_i)_{i \le \mu}$ increasing continuous such that for all $i < \mu$:

  \begin{enumerate}
    \item $A \subseteq N_i$.
    \item $M_i \prec N$, $\| M_i\|  \le \mu$.
    \item $M_i \prec N_{i + 1}$.
    \item $N \chm_{M_{i + 1}} N_{i + 1}$.
  \end{enumerate}

  \paragraph{This is enough:}
  By König's lemma, $\cf{\mu} \ge \kappa$ so by Lemma \ref{ch-cont-chain}, $N \chm_{M_\mu} N_\mu$. Moreover, by (2), (3) and the chain axioms, $M_\mu \prec N_\mu, N$, and by (2), $\| M_\kappa\|  \le \mu$. Thus $N \ch_{M_\mu} N_\mu$, and one can apply $(S)$ to get $N_\mu \ch_{M_\mu} N$. By monotonicity, $A \ch_{M_\mu} N$, exactly as needed.
  \paragraph{This is possible:}
  Pick any $A \subseteq N_0$ with $\| N_0\|  \le \mu$ (this is possible since $\mu \ge |A| + \kappa > \text{LS} (K)$). Now, given $i$ non-limit, $(N_j)_{j \le i}$ and $(M_j)_{j < i}$, use Lemma \ref{ch-dual-loc} to find $M_i \prec N$, $\| M_i\|  \le (\| N_i\| )^{<\kappa} \le \mu$, such that $N \chm_{M_i} N_i$. Then pick any $N_{i + 1}$ extending both $M_i$ and $N_i$, with $\| N_{i + 1}\|  \le \mu$.

\end{proof}

We finally have all the machinery to prove:

\begin{thm}[Canonicity of coheir]\label{canon-coheir}
  Assume $K$ is fully $(<\kappa)$-tame, fully $(<\kappa)$-type short, and has no weak $\kappa$-order property\footnote{See \cite[Definition 4.2]{bg-v9}.}.

  Assume $\ch$ has $(E)$. Then:

  \begin{enumerate}
    \item $\ch$ has $(C)_\kappa$, $(T)$, $(T_\ast)$, $(S)$, $(U)$, and $(L)$.
    \item Any independence relation satisfying $(E)$ and $(U)$ must be $\ch$ (for base models in $K_{\ge \kappa}$).
  \end{enumerate}
\end{thm}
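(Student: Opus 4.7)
The plan is to establish part (1) by assembling results already proved in the paper, and then derive part (2) as an immediate application of the main canonicity theorem (Corollary \ref{endcor-improved}).

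For part (1), I would first note that $(C)_\kappa$ and $(T)$ hold for $\ch$ outright by Proposition \ref{elem-prop-ch-ns}, with no extra hypotheses needed. Next, since by assumption $\ch$ has $(E)$ and $K$ is fully $(<\kappa)$-tame, fully $(<\kappa)$-type short, and has no weak $\kappa$-order property, I would invoke Fact \ref{bg-facts} to obtain $(U)$ and $(S)$ for $\ch$ over base models in $K_{\ge \kappa}$. Once $(S)$ is in hand, Theorem \ref{nflocal} immediately delivers $(L)$; in fact it gives the quantitative bound $\bkappa_\alpha(\ch) \leq ((\alpha + 2)^{<\kappa})^+$. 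Finally, combining $(S)$ and $(E)$ (or alternatively, combining the already-established $(T)$ with $(S)$ via Lemma \ref{right-trans-sym}), Corollary \ref{coheir-right-trans} yields $(T_\ast)$. This completes the list of properties in (1).

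For part (2), let $\nf$ be any independence relation satisfying $(E)$ and $(U)$ for base models in $K_{\ge \kappa}$, and fix such an $M$. Apply Corollary \ref{endcor-improved} with $\1nf := \nf$ and $\2nf := \ch$: the hypotheses $(E^{(1)})_M$ and $(U^{(1)})_M$ are given by assumption on $\nf$, while $(E^{(2)})_M$ is given by assumption on $\ch$, and $(U^{(2)})$ together with $(L^{(2)})$ were just established in part (1). The corollary then yields $\nf_M = \ch_M$, and since $M \in K_{\ge \kappa}$ was arbitrary, the two relations coincide.

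There is essentially no genuine obstacle here: the work has already been done in the preceding sections and in \cite{bg-v9}. The one thing to be careful about is bookkeeping on the size restriction, namely that every property of $\ch$ invoked is stated (and used) only for base models of size $\geq \kappa$, which matches both the hypotheses of Fact \ref{bg-facts} and the scope in which canonicity is being asserted. The conceptual content, such as it is, lies in observing that once $(S)$ has been obtained from \cite{bg-v9}, the local-character result of Theorem \ref{nflocal} closes the loop and feeds the full hypothesis list of Corollary \ref{endcor-improved}.
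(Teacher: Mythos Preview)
Your proof is correct and follows essentially the same route as the paper: Proposition \ref{elem-prop-ch-ns} for $(C)_\kappa$ and $(T)$, Fact \ref{bg-facts} for $(U)$ and $(S)$, Corollary \ref{coheir-right-trans} for $(T_\ast)$, and Theorem \ref{nflocal} for $(L)$. The only cosmetic difference is that for part (2) the paper cites Corollary \ref{endcor} directly (since $(S^{(2)})$ and $(T_\ast^{(2)})$ are already in hand from part (1)), whereas you invoke the repackaged Corollary \ref{endcor-improved}; both are equivalent here.
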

\begin{proof}
  By Proposition \ref{elem-prop-ch-ns}, $\ch$ has $(C)_\kappa$ and $(T)$. By Fact \ref{bg-facts}, $\ch$ has $(U)$ and $(S)$. By Corollary \ref{coheir-right-trans} (or Lemma \ref{ext-uniq-trans}), $\ch$ also has $(T_\ast)$. By Theorem \ref{nflocal}, $\ch$ has $(L)$. This takes care of (1). (2) follows from (1) and Corollary \ref{endcor}.
\end{proof}

\begin{cor}[Canonicity of coheir, assuming a strongly compact]
  Assume $\kappa$ is strongly compact, all models in $K_{\ge \kappa}$ are $\kappa$-saturated, and there exists $\lambda > \kappa$ such that $I (\lambda, K) < 2^\lambda$. Then:

  \begin{enumerate}
    \item $\ch$ has $(E)$, $(C)_\kappa$, $(T)$, $(T_\ast)$, $(S)$, $(U)$, and $(L)$.
    \item Any independence relation satisfying $(E)$ and $(U)$ must be $\ch$ (for base models in $K_{\ge \kappa}$).
  \end{enumerate}
\end{cor}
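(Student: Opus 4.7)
The plan is to observe that this corollary follows almost mechanically from the previous Theorem \ref{canon-coheir} by verifying each of its hypotheses under the stronger assumptions. The main work has already been done; what remains is a bookkeeping exercise combining Fact \ref{bg-facts} with Proposition \ref{elem-prop-ch-ns}.

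First I would unpack the strongly compact hypothesis using the ``moreover'' clause of Fact \ref{bg-facts}: since $\kappa$ is strongly compact, $K$ is automatically fully $(<\kappa)$-tame and fully $(<\kappa)$-type short, and moreover the existence of $\lambda > \kappa$ with $I(\lambda, K) < 2^\lambda$ implies that $K$ does not have the weak $\kappa$-order property. The same clause also gives that coheir has $(E_1)$ for free.

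Next I would handle the existence axiom $(E_0)$. By Proposition \ref{elem-prop-ch-ns}(2), $\ch$ has $(E_0)_M$ whenever $M$ is $\kappa$-saturated. Since we assume that every model in $K_{\ge \kappa}$ is $\kappa$-saturated, $\ch$ has $(E_0)$ (when restricted to base models in $K_{\ge \kappa}$). Combined with $(E_1)$ from the previous step, this yields $(E)$.

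Having verified all the hypotheses of Theorem \ref{canon-coheir}, part (1) follows directly: coheir satisfies $(E)$, $(C)_\kappa$, $(T)$, $(T_\ast)$, $(S)$, $(U)$, and $(L)$. Part (2) is then an immediate invocation of part (2) of Theorem \ref{canon-coheir}. I do not anticipate a genuine obstacle here, since every piece is already on the shelf; the only thing to watch is that each property is being claimed for base models of size at least $\kappa$, which matches both the convention stated at the beginning of the subsection and the scope of Fact \ref{bg-facts}.
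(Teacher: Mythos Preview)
Your proposal is correct and follows essentially the same route as the paper's proof: use Proposition \ref{elem-prop-ch-ns}(2) together with the $\kappa$-saturation hypothesis to obtain $(E_0)$, invoke the ``moreover'' part of Fact \ref{bg-facts} to obtain tameness, type shortness, no weak $\kappa$-order property, and $(E_1)$, and then apply Theorem \ref{canon-coheir}. The paper's version is terser but the logical structure is identical.
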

\begin{proof}
  By Proposition \ref{elem-prop-ch-ns}, $\ch$ has $(E_0)$. Thus by the moreover part of Fact \ref{bg-facts}, $\ch$ has $(E)$. Now apply Theorem \ref{canon-coheir}.
\end{proof}

\subsection{Canonicity of good frames}

As has already been noted, the framework $\text{AxFri}_3$ defined in \cite[Chapter V.B]{shelahaecbook2} is a precursor to our own, but Example \ref{counterexample} shows it is not canonical. Shelah also investigated an extension of $\text{AxFri}_3$ axiomatizing primeness (the ``primal framework'') but it is outside the scope of this paper.

We will however briefly discuss the canonicity of good frames. Good frames were first defined in \cite[Chapter II]{shelahaecbook}. We will assume the reader is familiar with their definition and basic properties. As already noted, the main difference with our framework is that a good frame is local: For a fixed $\lambda \ge \text{LS} (K)$, a good $\lambda$-frame assumes the existence of a nice independence relation $\nf$ where only $a \nf_M^{\bigN} N$ is defined, for $a$ an element of $\bigN$ and $M \prec N \prec \bigN$ models of size $\lambda$. 

In \cite[Section II.6]{shelahaecbook}, Shelah shows that, assuming a technical condition (that the frame is weakly successful), one can extend it uniquely to a \emph{non-forking frame}: basically an independence relation $\nf$ where $M_1 \nf_M^{\bigN} M_2$ is defined for $M \prec M_\ell \prec \bigN$ in $K_\lambda$, $\ell = 1, 2$. For the rest of this section, we fix $\lambda \ge \text{LS} (K)$ and we do \emph{not} assume the existence of a monster model (Hypothesis \ref{monster-model-hyp}). Recall however that the definition of a good frame implies $K_\lambda$ has some nice properties, i.e.\ it has amalgamation, joint embedding, no maximal model, is stable\footnote{Really only stable for basic types, but full stability follows (see \cite[Claim II.4.2.1]{shelahaecbook}).} in $\lambda$, and has a superlimit model.

\begin{fact}\label{nf-frame}
  If $\mathfrak{s}$ is a weakly successful good $\lambda$-frame, then it extends uniquely to a non-forking frame (i.e.\ using Shelah's terminology, there is a unique non-forking frame $\text{NF}$ that respects $\mathfrak{s}$).
\end{fact}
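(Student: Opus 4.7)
The plan is to split the statement into an existence part (there is a non-forking frame $\text{NF}$ extending $\mathfrak{s}$) and a uniqueness part (any two such frames coincide), and to dispatch them by quite different arguments: the existence part follows Shelah's construction in \cite[Section II.6]{shelahaecbook}, while the uniqueness part is a direct application of the canonicity theorem (Corollary \ref{endcor-improved}) adapted to the ``monsterless'' framework of good frames.

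For existence, the starting point is that weak success supplies, for every basic type $p \in S^{\text{bs}}(M)$, a \emph{uniqueness triple} $(M, N, a)$ realizing $p$ with a strong amalgamation property. Using uniqueness triples as building blocks, one defines independent sequences $\langle M_i : i < \alpha\rangle$ of $\lambda$-sized models inside an ambient $\bigN$, and then sets $M_1 \nf_M^{\bigN} M_2$ to hold iff the configuration $M \prec M_1, M_2 \prec \bigN$ embeds into (equivalently: can be exhibited by) such an independent sequence, in a way compatible with $\mathfrak{s}$-non-forking on singletons. The work then goes into: first, showing this relation does not depend on the choice of witnessing sequence (this is where weak success enters most crucially, via the ``NF is an equivalence on quadruples'' machinery of \cite[Section II.6]{shelahaecbook}); second, verifying the axioms of a non-forking frame in order — invariance, monotonicity, base monotonicity, symmetry, transitivity, extension, uniqueness, and local character — each of which reduces, after enough bookkeeping, to a statement about basic types already guaranteed by $\mathfrak{s}$; and third, verifying that $\text{NF}$ \emph{respects} $\mathfrak{s}$, i.e.\ that when $M_1$ is generated over $M$ by a single element realizing a basic type, $\text{NF}(M, M_1, M_2, \bigN)$ agrees with $\mathfrak{s}$-non-forking.

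For uniqueness, suppose $\text{NF}^{(1)}$ and $\text{NF}^{(2)}$ are two non-forking frames respecting $\mathfrak{s}$. Both are independence relations in our sense, restricted to models in $K_\lambda$ with an ambient $\bigN$ as the fourth parameter; both satisfy $(E)$ (existence and extension, which are part of the non-forking frame axioms), $(U)$ (uniqueness, likewise), and $(L)$ (local character, because $\kappa_1(\text{NF}^{(\ell)}) \le \lambda^+$ by the frame axiom on chains of length $\lambda^+$, and hence $\kappa_\alpha < \infty$ for all $\alpha$ since all objects have size $\lambda$). Moreover both agree with $\mathfrak{s}$ on basic types, hence on singletons. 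Now run the proof of Corollary \ref{endcor-improved} in this context: the ``monsterless'' adaptation is straightforward because every configuration of interest lives inside a common ambient $\bigN$, and each application of invariance/extension/uniqueness inside the proof already respects an ambient model. This forces $\text{NF}^{(1)} = \text{NF}^{(2)}$, and in particular both equal the frame built in the existence part.

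The main obstacle is the existence part — not because any single idea is hard, but because one needs the full machinery of uniqueness triples, independent sequences, and non-forking amalgamation in the frame setting, and because each axiom for $\text{NF}$ requires its own verification. The subtlest step is well-definedness of the candidate $\text{NF}$, i.e.\ showing that two independent sequences both ``witnessing'' the same configuration yield the same verdict; this is essentially the content of weak success and uses Shelah's ``NF is an equivalence relation on certain quadruples of models'' theorem. Once $\text{NF}$ is shown to exist and be a non-forking frame, uniqueness is essentially a corollary of the canonicity theorem, so the only genuinely new observation there is that the monster-model assumption in our canonicity arguments can be replaced by quantifying over a common ambient $\bigN$ — something the paper explicitly flags as routine.
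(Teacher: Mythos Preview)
The paper does not prove this statement at all: it is recorded as a \emph{Fact} and the proof consists of two citations to Shelah's book --- uniqueness is \cite[Claim II.6.3]{shelahaecbook} and existence is \cite[Conclusion II.6.34]{shelahaecbook}. Your existence sketch is a reasonable high-level outline of Shelah's construction, so on that half you and the paper agree (you are just more verbose).

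Your uniqueness argument, however, takes a genuinely different route. Shelah's Claim II.6.3 is a short direct argument using the specific way a non-forking frame interacts with the uniqueness triples of the underlying good frame; it predates and does not require any of the canonicity machinery of this paper. You instead propose to invoke Corollary \ref{endcor-improved}. This is in spirit what the paper does \emph{later}, in the proof of Theorem \ref{canon-good-frames}, but there the paper first \emph{uses} Fact \ref{nf-frame} to produce the non-forking frames, then builds a size-$\lambda^+$ saturated model to serve as a local monster, and invokes Fact \ref{nf-frame-respect2} to get a relation allowing a single element on the left before running the canonicity argument. So the ``monsterless adaptation is routine'' remark hides real work that the paper only carries out afterwards, and your argument as written is under-specified: a non-forking frame is a four-place relation on models in $K_\lambda$, not an independence relation in the sense of Definition \ref{indep-rel-def}, and you need to say precisely which incarnation of $(E)$, $(U)$, $(L)$ you are using and why the chain of lemmas behind Corollary \ref{endcor-improved} (in particular Lemma \ref{baldwin-218} and Corollary \ref{eplus}) survives the restriction to $K_\lambda$ with an ambient $\bigN$. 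None of this is fatal --- the approach can be made to work, essentially by replicating the proof of Theorem \ref{canon-good-frames} --- but it is substantially heavier than Shelah's original argument, and the paper deliberately chose to cite rather than redo it.
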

\begin{proof}
  Uniqueness is \cite[Claim II.6.3]{shelahaecbook} and existence is \cite[Conclusion II.6.34]{shelahaecbook}.
\end{proof}

As Shelah observed, Example \ref{counterexample} shows that a non-forking frame by itself need not be unique: we need to know it comes from a good frame, or at least that there is a good frame around. Shelah showed:

\begin{fact}\label{nf-frame-respect}
  Assume that $\mathfrak{s}$ is a $\text{good}^+$ $\lambda$-frame and $\text{NF}$ is a non-forking frame, both with underlying AEC $K$. Then $\text{NF}$ respects $\mathfrak{s}$. 
\end{fact}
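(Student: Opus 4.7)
The plan is to reduce to a singleton-level canonicity argument in the spirit of Corollary \ref{endcor-improved}, transferred to the monsterless, $\lambda$-localized frame context. I would first extract from $\text{NF}$ a singleton-type independence relation by declaring, for $M_0 \prec M_2$ in $K_\lambda$, $a \in \bigN$, and an ambient $\bigN \in K$, that $a$ is $\text{NF}$-independent over $M_0$ from $M_2$ inside $\bigN$ iff there exists $M_1 \in K_\lambda$ with $a \in M_1$, $M_0 \prec M_1 \prec \bigN$ and $(M_0, M_1, M_2, \bigN) \in \text{NF}$. Monotonicity of $\text{NF}$ makes this well defined and independent of the choice of $M_1$, and invariance, monotonicity and base monotonicity descend automatically. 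Likewise, $\mathfrak{s}$ gives a singleton-type nonforking relation in the obvious way.

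Next, I would verify that the extracted singleton relation inherits $(E)$, $(U)$ and $(L)$ from the axioms of a non-forking frame: extension follows by $\text{NF}$-extending an $M_1$ containing $a$; singleton uniqueness follows from the model-level uniqueness of $\text{NF}$ applied to two $\text{NF}$-isomorphic choices of $M_1$ each containing a witness; local character follows from the chain axiom of $\text{NF}$ along an increasing chain of $M_0$'s after fixing a single $M_1 \in K_\lambda$ containing $a$. The $\mathfrak{s}$-singleton relation has these properties by the definition of a good frame. Applying a $\lambda$-local, monsterless adaptation of Corollary \ref{endcor-improved} (really of the underlying Lemmas \ref{onedir} and \ref{eplus-canon}, plus Corollary \ref{eplus} for $(E_+)$) then yields that the two singleton relations coincide on $K_\lambda$. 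Unfolding definitions, this is exactly the statement that $\text{NF}$ respects $\mathfrak{s}$: for any $\text{NF}$-quadruple $(M_0, M_1, M_2, \bigN)$ and any $a \in M_1$, we get that $a$ is $\text{NF}$-independent over $M_0$ from $M_2$ in our extracted relation, hence $\text{gtp}(a/M_2; \bigN)$ does not $\mathfrak{s}$-fork over $M_0$.

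The main obstacle will be twofold. First, the canonicity arguments of Sections \ref{indep-rel}--\ref{relations-properties} are phrased using a monster model, while good frames are monsterless; carrying the ambient $\bigN$ as a fourth parameter through Lemmas \ref{onedir}, \ref{nsmon} and \ref{eplus-canon}, Corollary \ref{eplus}, and Theorem \ref{symmetry} is straightforward in principle but must be executed carefully, in particular because ``no order property'' needs to be reinterpreted inside $\bigN$ rather than inside $\mathfrak{C}$. Second, the good$^+$ hypothesis on $\mathfrak{s}$ must be invoked precisely to guarantee the sharp uniqueness needed by the canonicity argument; without it, Example \ref{counterexample}-style pathologies are not excluded, so the argument should pinpoint where good$^+$ (rather than merely good) is used. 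I expect this to happen at the uniqueness step for the extracted $\text{NF}$-singleton relation, where Shelah's formulation of good$^+$ in terms of uniqueness triples translates directly into the hypothesis of Lemma \ref{onedir}.
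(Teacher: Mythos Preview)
The paper does not actually prove this Fact: the entire ``proof'' is a citation to \cite[Claim II.6.7]{shelahaecbook}. Your proposal is therefore a genuinely different route, and in fact it is essentially the strategy the paper itself deploys a few paragraphs later in Theorem \ref{canon-good-frames} to \emph{strengthen} Fact \ref{nf-frame-respect} by dropping the good$^+$ hypothesis. So as a comparison: Shelah's cited argument and yours are not the same proof at all, and what your approach buys is precisely the improvement recorded in Theorem \ref{canon-good-frames}.

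This exposes the real problem in your last paragraph: you have misidentified both the content and the role of good$^+$. First, good$^+$ is not ``formulated in terms of uniqueness triples''; the existence of enough uniqueness triples is the definition of \emph{weakly successful}, an entirely separate hypothesis. Shelah's good$^+$ forbids certain $\lambda^+$-length chains exhibiting a pathological forking pattern, and his proof of Claim II.6.7 proceeds by building such a chain from a counterexample to respect and then contradicting good$^+$. Second, your canonicity argument does not use good$^+$ anywhere. The uniqueness you need for the $\text{NF}$-extracted singleton relation comes from the uniqueness axiom of a non-forking frame itself; uniqueness for the $\mathfrak{s}$-relation is already part of any good frame; and local character likewise comes from the respective axiom packages, so Example \ref{counterexample}-style pathologies are already excluded without invoking good$^+$. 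If you carry your plan through correctly you will have proved something stronger than Fact \ref{nf-frame-respect}, namely a form of Theorem \ref{canon-good-frames}. Your outline is not wrong as mathematics, but your diagnosis of where good$^+$ enters is mistaken, and it does not reproduce the cited proof.
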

\begin{proof}
  See \cite[Claim II.6.7]{shelahaecbook}.
\end{proof}

Here, $\text{good}^+$ is a technical condition asking for slightly more than just the original axioms of a good frame. 

We also have that a non-forking frame induces a good frame: 

\begin{fact}\label{nf-frame-respect2}
  Assume $K_\lambda$ has a superlimit, is stable in $\lambda$, and carries a non-forking frame $\text{NF}$ (so in particular it has amalgamation) with independence relation (defined for models in $K_\lambda$) $\nf$. Then the relation $a \nf_M^{\bigN} N$ holds iff there is $\bigN' \succ \bigN$ and $M \prec M' \prec \bigN'$ with $a \in M'$ so that $M' \nf_M^{\bigN'} N$ defines a type-full (i.e.\ the basic types are all the nonalgebraic types) good $\lambda$-frame $\mathfrak{t}$. If in addition $\text{NF}$ comes from a type-full weakly successful good $\lambda$-frame $\mathfrak{s}$, then $\mathfrak{s} = \mathfrak{t}$.
\end{fact}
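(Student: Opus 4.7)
The plan is to verify that the defined relation $\nf^{\mathfrak{t}}$ satisfies each axiom of a type-full good $\lambda$-frame, by lifting the properties of $\text{NF}$ (stated for pairs of models) to the level of individual elements. Throughout, I use that $K_\lambda$ has amalgamation, joint embedding, no maximal models, a superlimit, and stability in $\lambda$, all of which are either hypothesized or forced by the existence of a non-forking frame.

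First I would check that the relation is well-defined: if $a \in M_1' \cap M_2'$ and both $M_\ell' \nf_M N$ witness the relation, possibly inside different ambients, then by amalgamating the $M_\ell'$ over $M$ (fixing $a$) inside a common ambient model and applying invariance and monotonicity of $\text{NF}$, the two witnesses give the same verdict. Invariance and monotonicity of $\nf^{\mathfrak{t}}$ then follow immediately. Local character and extension for $\nf^{\mathfrak{t}}$ reduce to the corresponding properties of $\text{NF}$ at the model level: given a chain $(N_i)_{i \le \lambda^+}$ and a singleton $a$, pick any $M' \succ N_0$ of size $\lambda$ containing $a$ and apply local character of $\text{NF}$ to the pair $(M', (N_i))$; for extension, apply $\text{NF}$'s extension to an $M' \ni a$ and then extract a realization of the desired type inside the extended model. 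Uniqueness follows by amalgamating two non-forking extensions of the same type into a common $M'$ and invoking uniqueness of $\text{NF}$. Symmetry, transitivity, and continuity transfer by the same method. Type-fullness is built into the definition, and stability in $\lambda$ provides the stability clause $|S^{\text{na}}(M)| \le \lambda$ for $M \in K_\lambda$ that the good frame axioms require.

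For the moreover part, if $\text{NF}$ comes from a type-full weakly successful good $\lambda$-frame $\mathfrak{s}$, then by Fact \ref{nf-frame} it is the unique non-forking frame respecting $\mathfrak{s}$. The definition of ``respects'' says exactly that $M' \nf_M N$ in $\text{NF}$ holds iff $\text{gtp}(a/N)$ does not $\mathfrak{s}$-fork over $M$ for every $a \in M'$; unpacking the definition of $\mathfrak{t}$ then yields $\nf^{\mathfrak{t}} = \nf^{\mathfrak{s}}$, and since both frames are type-full on the same class $K_\lambda$ we conclude $\mathfrak{s} = \mathfrak{t}$. The main obstacle will be the well-definedness step and the careful passage between ``element'' and ``model'' formulations of each axiom; each needs amalgamation together with a clean use of the ``respecting'' clause to avoid smuggling in properties that only hold at one level, and one must also keep track of the fact that extracting a realization inside a larger model requires the ambient $\bigN'$ to be chosen appropriately.
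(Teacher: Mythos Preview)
The paper does not give its own proof of this statement: it is labeled a \emph{Fact} and the entire proof reads ``See \cite[Claim II.6.36]{shelahaecbook}.'' So your proposal is to be measured against Shelah's argument rather than anything written in this paper. Your overall strategy---verify each good-frame axiom for $\mathfrak{t}$ by reducing it to the corresponding property of $\text{NF}$, and handle the moreover clause via the ``respects'' relation---is indeed how Shelah proceeds, and most of your reductions (invariance, monotonicity, extension, symmetry, the moreover part) are along the right lines.

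There is, however, a genuine gap in your local-character sketch. You write: ``pick any $M' \succ N_0$ of size $\lambda$ containing $a$ and apply local character of $\text{NF}$ to the pair $(M', (N_i))$'', expecting to obtain $i_0$ with $M' \nf_{N_{i_0}} N_\delta$. But in the $\text{NF}$-relation the base must sit below both sides, so $M' \nf_{N_{i_0}} N_\delta$ already presupposes $N_{i_0} \prec M'$; having fixed $M'$ above only $N_0$, you have no control over whether any later $N_{i_0}$ lies below it. More fundamentally, $\text{NF}$'s local-character axiom is not phrased as ``a fixed left-hand model against a chain on the right''; it is a statement about increasing continuous chains of $\text{NF}$-squares (Shelah's axiom~(f) in \cite[Definition II.6.1]{shelahaecbook}). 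The correct reduction instead places $a$ inside a model extending $N_\delta$, applies $\text{NF}$'s local character to the full chain so extended, and then uses symmetry and monotonicity of $\text{NF}$ to manufacture a witness $M' \succ N_{i_0}$ containing $a$ with $M' \nf_{N_{i_0}} N_\delta$. Your well-definedness step is also slightly off target: the relation for $\mathfrak{t}$ is existential in both $M'$ and $\bigN'$, so it is automatically a well-defined predicate on $(a,M,N,\bigN)$ and there is no ``conflict of verdicts'' to resolve; what one actually needs (and what is routine from amalgamation and invariance of $\text{NF}$) is monotonicity in the ambient $\bigN$.
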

\begin{proof}
  See \cite[Claim II.6.36]{shelahaecbook}.
\end{proof}

Thus we obtain the following canonicity result:

\begin{cor}
  Assume that $\mathfrak{s}_1$ is a weakly successful $\text{good}^+$ $\lambda$-frame and $\mathfrak{s}_2$ is a weakly successful good $\lambda$-frame in the same underlying AEC $K$. Assume further $\mathfrak{s}_1$ and $\mathfrak{s}_2$ are type-full (i.e.\ their basic types are all the nonalgebraic types). Then $\mathfrak{s}_1 = \mathfrak{s}_2$.  
\end{cor}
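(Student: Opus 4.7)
The plan is to reduce the canonicity of the two good frames to the canonicity of their associated non-forking frames, which is already provided by Shelah's machinery recalled in Facts \ref{nf-frame}, \ref{nf-frame-respect}, and \ref{nf-frame-respect2}.

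First, since both $\mathfrak{s}_1$ and $\mathfrak{s}_2$ are weakly successful good $\lambda$-frames, Fact \ref{nf-frame} applies to each: let $\text{NF}_1$ be the unique non-forking frame respecting $\mathfrak{s}_1$, and $\text{NF}_2$ the unique non-forking frame respecting $\mathfrak{s}_2$. The goal reduces to showing $\text{NF}_1 = \text{NF}_2$ and then recovering $\mathfrak{s}_1 = \mathfrak{s}_2$.

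Next, I would invoke Fact \ref{nf-frame-respect} using the extra assumption that $\mathfrak{s}_1$ is $\text{good}^+$: this forces $\text{NF}_2$, viewed simply as a non-forking frame on $K$, to respect $\mathfrak{s}_1$. But by the uniqueness clause of Fact \ref{nf-frame}, there is only one non-forking frame respecting $\mathfrak{s}_1$, namely $\text{NF}_1$. Hence $\text{NF}_1 = \text{NF}_2$.

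Finally, to pass back from the non-forking frames to the good frames themselves, I would apply Fact \ref{nf-frame-respect2} twice. Since $K_\lambda$ has a superlimit and is stable in $\lambda$ (both inherited from the existence of a good $\lambda$-frame on $K$), and since $\text{NF}_1 = \text{NF}_2$ arises from the type-full weakly successful good $\lambda$-frames $\mathfrak{s}_1$ and $\mathfrak{s}_2$, Fact \ref{nf-frame-respect2} identifies each $\mathfrak{s}_i$ with the unique type-full good frame $\mathfrak{t}$ induced by the common non-forking frame. Therefore $\mathfrak{s}_1 = \mathfrak{t} = \mathfrak{s}_2$.

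The only genuine subtlety, and the step I expect to require the most care, is verifying that Fact \ref{nf-frame-respect} is genuinely applicable: it demands $\text{NF}_2$ be a non-forking frame on the same underlying AEC $K$, and that the $\text{good}^+$ hypothesis on $\mathfrak{s}_1$ is used in the right direction (to make $\text{NF}_2$ respect $\mathfrak{s}_1$, not the reverse). Everything else is a direct chain of citations once this observation is in place; no additional combinatorial work in the spirit of Sections \ref{comparing}--\ref{relations-properties} is needed, because Shelah's results already do the analogue of Corollary \ref{endcor-improved} inside the local, monster-model-free setting of good frames.
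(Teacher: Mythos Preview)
Your proposal is correct and follows essentially the same route as the paper: extend each $\mathfrak{s}_\ell$ to its non-forking frame via Fact \ref{nf-frame}, use Fact \ref{nf-frame-respect} (with the $\text{good}^+$ hypothesis on $\mathfrak{s}_1$) together with the uniqueness in Fact \ref{nf-frame} to conclude $\text{NF}_1 = \text{NF}_2$, and then invoke Fact \ref{nf-frame-respect2} to recover $\mathfrak{s}_1 = \mathfrak{s}_2$. Your write-up is in fact slightly more explicit than the paper's about why the uniqueness clause of Fact \ref{nf-frame} is needed and why the type-full assumption enters at the last step.
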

\begin{proof}
  Using Fact \ref{nf-frame}, let $\text{NF}_\ell$ be the non-forking frame extending $\mathfrak{s}_\ell$ for $\ell = 1,2$.
  By Fact \ref{nf-frame-respect}, $\text{NF}_2$ respects $\mathfrak{s}_1$, so $\text{NF}_1 = \text{NF}_2$. By Fact \ref{nf-frame-respect2} (the existence of a good frame implies the stability and superlimit hypotheses), we must also have $\mathfrak{s}_1 = \mathfrak{s}_2$.
\end{proof}

The methods of this paper can show slightly more: we can get rid of the $\text{good}^+$.

\begin{thm}[Canonicity of good frames]\label{canon-good-frames}
  Let $\mathfrak{s}_1$, $\mathfrak{s}_2$ be weakly successful good $\lambda$-frames with underlying AEC $K$ and the same basic types. Then $\mathfrak{s}_1 = \mathfrak{s}_2$. 
\end{thm}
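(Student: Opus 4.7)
The plan is to lift the statement to canonicity of the associated non-forking frames (which handle models on both sides), and then apply the canonicity machinery of this paper in a monster-free form. For $\ell \in \{1, 2\}$, Fact \ref{nf-frame} supplies the unique non-forking frame $\text{NF}_\ell$ extending $\mathfrak{s}_\ell$. Writing $\1nf$ and $\2nf$ for the associated local independence relations on $\lambda$-sized models $M \prec M_1, M_2 \prec \bigN$ inside an ambient $\bigN \in K_\lambda$, both inherit from the good frame axioms (and from the construction in \cite[Section II.6]{shelahaecbook}) the local analogs of extension $(E)$, uniqueness $(U)$, local character $(L)$, symmetry $(S)$ and right transitivity $(T_\ast)$.

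The first step is to show $\1nf = \2nf$. For this I would transcribe the proof of Corollary \ref{endcor} into the monster-free setting: Lemmas \ref{onedir} and \ref{nsmon} reduce the problem to showing $\1nf \subseteq \nes$ (which uses only $(E)$ and $(U)$); Lemmas \ref{indep-exist} and \ref{baldwin-218} build an independent sequence for $\2nf$ and collapse it using $(S), (T_\ast), (L)$; Corollary \ref{eplus} then yields strong extension $(E_+^{(2)})$, and Lemma \ref{eplus-canon} gives $\1nf \subseteq \2nf$. The symmetric argument forces $\1nf = \2nf$, hence $\text{NF}_1 = \text{NF}_2$. The second step is bookkeeping: since $\mathfrak{s}_1$ and $\mathfrak{s}_2$ share the same basic types and a basic type $p \in S^{bs}(N)$ is nonforking over $M$ in $\mathfrak{s}_\ell$ precisely when $(M', M, N) \in \text{NF}_\ell$ for any $M' \succ M$ in $K_\lambda$ containing a realization of $p$, the equality $\text{NF}_1 = \text{NF}_2$ yields $\mathfrak{s}_1 = \mathfrak{s}_2$.

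The main obstacle is the careful transcription of the canonicity argument into the monster-free, $K_\lambda$-local framework. Theorem \ref{symmetry} and Corollary \ref{eplus} build chains of ambient models and repeatedly invoke extension; I must check that every construction stays inside $K_\lambda$. This is possible because singletons have $\kappa_1 \le \aleph_0$ in a good $\lambda$-frame (so the chains stay short), and because amalgamation in $K_\lambda$---an axiom of a good frame---always lets us enlarge $\bigN$ to a $\lambda$-sized ambient model absorbing the constructed data. The ``no order property'' hypothesis of Theorem \ref{symmetry} translates to stability in $\lambda$, which is automatic for good $\lambda$-frames (and in any case follows from $(U)$ and $(L)$ via the proof of Lemma \ref{local-nop}).
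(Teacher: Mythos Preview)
Your plan is essentially the paper's own: pass to the non-forking frames $\text{NF}_\ell$, localize the Corollary \ref{endcor} machinery (independent sequence, Lemma \ref{baldwin-218}, Lemma \ref{eplus-canon}) to $K_\lambda$, and read off $\mathfrak{s}_1 = \mathfrak{s}_2$. Two differences are worth flagging. First, the paper does not run the canonicity argument on $\text{NF}$ directly; instead it invokes Fact \ref{nf-frame-respect2} to extend $\text{NF}_\ell$ so that a single element is allowed on the left, reduces to the type-full case, and then proves $a \1nf_M N_0 \Rightarrow a \2nf_M N_0$ for $a$ a singleton. This is exactly why $\kappa_1 \le \omega$ suffices in the paper's version of Lemma \ref{baldwin-218}. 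In your route the left-hand side is a model of size $\lambda$, so citing ``singletons have $\kappa_1 \le \aleph_0$'' is not the right justification; you need local character of $\text{NF}$ for $\lambda$-sized models, which is indeed an axiom of the non-forking frame, but you should say so. Second, rather than repeatedly enlarging $\bigN$ by amalgamation, the paper builds once and for all a saturated $\mathcal{M} \in K_{\lambda^+}$ (using stability, amalgamation, joint embedding, no maximal model in $\lambda$) and treats it as a local monster; this keeps the transcription of Lemmas \ref{indep-exist}--\ref{eplus-canon} literally verbatim. Finally, you do not need Theorem \ref{symmetry} or the no-order-property discussion at all: symmetry is already part of the non-forking frame axioms, as you yourself note in the first paragraph.
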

\begin{proof}[Proof sketch]
  Using Fact \ref{nf-frame}, let $\text{NF}_\ell$ be the non-forking frame extending $\mathfrak{s}_\ell$ for $\ell = 1,2$. Let $\1nf$, $\2nf$ be the independence relations (for models in $K_\lambda$) associated to $\text{NF}_1$, $\text{NF}_2$ respectively. By Fact \ref{nf-frame-respect2}, one can extend their domain to allow a single element on the left hand side. Thus without loss of generality we may assume $\mathfrak{s}_1$ and $\mathfrak{s}_2$ are type-full. Let $M \prec N \prec \bigN$ and let $a \in \bigN$. Assume $a \onf{\bigN}_M N_0$. We show $a \tnf{\bigN}_M N_0$. The symmetric proof will show the converse is true, and hence that $\mathfrak{s}_1 = \mathfrak{s}_2$. 

  First observe that stability, amalgamation, joint embedding and no maximal model in $\lambda$ implies we can build a saturated (hence model-homogeneous) model $\mathcal{M}$ of size $\lambda^+$. Since (as we will show) the argument below only uses objects of size $\lambda$, we can take $\mathcal{M}$ to be our monster model for this argument (i.e.\ we assume any set we consider comes from $\mathcal{M}$). Then we have $a \onf{\bigN}_M N_0$ if and only if $a \onf{\mathcal{M}}_M N_0$ so below we drop $\mathcal{M}$ and only talk about $a \1nf_M N_0$, and similarly for $\2nf$. Note that working inside $\mathcal{M}$ is not essential (we could always make the ambient model $\bigN$ grow bigger as our proof proceeds) but simplifies the notation and lets us quote our previous proofs verbatim.

  Now, we observe that our proof of Corollary \ref{endcor} is local-enough (i.e.\ it can be carried out inside $\mathcal{M}$). We sketch the details: First build a sequence $(M_i)_{i < \omega}$ independent (in the sense of $\2nf$) over $M$ so that $M_0 = N_0$, $M_i \equiv_M N_0$. Let $N := \bigcup_{i < \omega} N_i'$, where $N_i'$ witness the independence of the sequence. Notice that we can take $N \in K_\lambda$, by cardinality considerations. By extension, find $f: N \equiv_M N'$ so that $a \1nf_M N'$. Let $M_i' := f[M_i]$. By the proof of Lemma \ref{baldwin-218} (and recalling that $\kappa_1 (\nf) \le \omega$ in good frames), there is $i < \omega$ such that $a \2nf_M M_i'$ (notice that Fact \ref{nf-frame-respect2} is what makes the argument go through). Finally, use the proof of Lemma \ref{eplus-canon} to conclude that $a \2nf_M N_0$.
\end{proof}

We do not know whether one can say more, namely:

\begin{question}\label{categ-good-frame-q}
  Let $\mathfrak{s}_1$ and $\mathfrak{s}_2$ be good $\lambda$-frames with the same underlying AEC and the same basic types. Is $\mathfrak{s}_1 = \mathfrak{s}_2$?
\end{question}

\section{Conclusion}

We have shown that an AEC with a monster model can have at most one ``forking-like'' notion. On the other hand, we believe the question of when such a forking-like notion exists is still poorly understood. For example, is there a natural condition implying that coheir has extension in Fact \ref{bg-facts}? Even the following is open:

\begin{question}\label{indep-existence-q}
  Assume $K$ is fully $(<\kappa)$-tame, fully $(<\kappa)$-type short and categorical in some high-enough $\lambda > \kappa$. Does $K$ have an independence relation with $(E)$, $(U)$ and $(L)$?
\end{question}

Using the good frames machinery, an approximation is proven in \cite{ext-frame-jml} using some GCH-like hypotheses. However, the global assumptions of tameness and a monster model gives us a lot more power than just the local assumptions used to obtain a good frame. 

It is also open whether such an independence relation has to be coheir (i.e.\ even if coheir does not satisfy $(E)$):

\begin{question}\label{coheir-canon-q}
  Assume $\nf$ is an independence relation with $(E)$, $(U)$ and $(L)$. Let $M$ be sufficiently saturated. Under what conditions does $\nf_M = \ch_M$?
\end{question}

Finally, we note that while some of our results are local and can be adapted to the good frames context (see e.g.\ Theorem \ref{canon-good-frames}), some are not (e.g.\ Theorem \ref{symmetry}, Lemma \ref{inv-nfm}.\ref{trans-nfm}). It would be interesting to know how much non-locality is really necessary for such results. This would help us understand how much power the globalness of our definition of independence relations really gives us.

\bibliographystyle{amsalpha}
\bibliography{canon-forking}

\providecommand{\bysame}{\leavevmode\hbox to3em{\hrulefill}\thinspace}
\providecommand{\MR}{\relax\ifhmode\unskip\space\fi MR }
\providecommand{\MRhref}[2]{%
  \href{http://www.ams.org/mathscinet-getitem?mr=#1}{#2}
}
\providecommand{\href}[2]{#2}
\begin{thebibliography}{Bon14b}

\bibitem[Adl09a]{Adler}
Hans Adler, \emph{A geometric introduction to forking and thorn-forking},
  Journal of Mathematical Logic \textbf{9} (2009), no.~1, 1--20.

\bibitem[Adl09b]{adler-rank}
\bysame, \emph{Thorn-forking as local forking}, Journal of Mathematical Logic
  \textbf{9} (2009), no.~1, 21--38.

\bibitem[Bal88]{baldwinbook}
John~T. Baldwin, \emph{Fundamentals of stability theory}, Perspectives in
  mathematical logic, Springer-Verlag, 1988.

\bibitem[BG]{bg-v9}
Will Boney and Rami Grossberg, \emph{Forking in short and tame {A}{E}{C}s},
  Preprint. URL: \url{http://arxiv.org/abs/1306.6562v9}.

\bibitem[Bon]{longtypes-toappear-v2}
Will Boney, \emph{Computing the number of types of infinite length}, Notre Dame
  Journal of Formal Logic, To appear. URL:
  \url{http://arxiv.org/abs/1309.4485v2}.

\bibitem[Bon14a]{ext-frame-jml}
\bysame, \emph{Tameness and extending frames}, Journal of Mathematical Logic
  \textbf{14} (2014), no.~2, 1450007.

\bibitem[Bon14b]{tamelc-jsl}
\bysame, \emph{Tameness from large cardinal axioms}, The Journal of Symbolic
  Logic \textbf{79} (2014), no.~4, 1092--1119.

\bibitem[BV]{bv-sat-v3}
Will Boney and Sebastien Vasey, \emph{Chains of saturated models in
  {A}{E}{C}s}, Preprint. URL: \url{http://arxiv.org/abs/1503.08781v3}.

\bibitem[GV06a]{tamenessthree}
Rami Grossberg and Monica VanDieren, \emph{Categoricity from one successor
  cardinal in tame abstract elementary classes}, Journal of Mathematical Logic
  \textbf{6} (2006), no.~2, 181--201.

\bibitem[GV06b]{tamenessone}
\bysame, \emph{Galois-stability for tame abstract elementary classes}, Journal
  of Mathematical Logic \textbf{6} (2006), no.~1, 25--49.

\bibitem[GVV]{gvv-toappear-v1_2}
Rami Grossberg, Monica VanDieren, and Andr{\'e}s Villaveces, \emph{Uniqueness
  of limit models in classes with amalgamation}, Mathematical Logic Quarterly,
  To appear. URL: \url{http://arxiv.org/abs/1507.02118v1}.

\bibitem[HH84]{hh84}
Victor Harnik and Leo Harrington, \emph{Fundamentals of forking}, Annals of
  Pure and Applied Logic \textbf{26} (1984), 245--286.

\bibitem[JS12]{jasi}
Adi Jarden and Alon Sitton, \emph{Independence, dimension and continuity in
  non-forking frames}, The Journal of Symbolic Logic \textbf{78} (2012), no.~2,
  602--632.

\bibitem[JS13]{jrsh875}
Adi Jarden and Saharon Shelah, \emph{Non-forking frames in abstract elementary
  classes}, Annals of Pure and Applied Logic \textbf{164} (2013), 135--191.

\bibitem[Kol05]{kolesnikov-dependence}
Alexei Kolesnikov, \emph{Dependence relations in non-elementary classes}, Logic
  and its applications (Andreas Blass and Yi~Zhang, eds.), Contemporary
  Mathematics, American Mathematical Society, 2005, pp.~203--230.

\bibitem[KP97]{kp97}
Byunghan Kim and Anand Pillay, \emph{Simple theories}, Annals of Pure and
  Applied Logic \textbf{88} (1997), 149--164.

\bibitem[Las76]{lascar76}
Daniel Lascar, \emph{Ranks and definability in superstable theories}, Israel
  Journal of Mathematics \textbf{23} (1976), no.~1, 53--87.

\bibitem[MS90]{makkaishelah}
Michael Makkai and Saharon Shelah, \emph{Categoricity of theories in
  $\mathbb{{L}}_{\kappa,\omega}$, with $\kappa$ a compact cardinal}, Annals of
  Pure and Applied Logic \textbf{47} (1990), 41--97.

\bibitem[She75]{sh48}
Saharon Shelah, \emph{Categoricity in $\aleph_1$ of sentences in
  ${L}_{\omega_1, \omega}({Q})$}, Israel Journal of Mathematics \textbf{20}
  (1975), no.~2, 127--148.

\bibitem[She78]{shelahfobook78}
\bysame, \emph{Classification theory and the number of non-isomorphic models},
  Studies in logic and the foundations of mathematics, vol.~92, North-Holland,
  1978.

\bibitem[She87]{sh300-orig}
\bysame, \emph{Universal classes}, Classification theory (Chicago, IL, 1985)
  (John~T. Baldwin, ed.), Lecture Notes in Mathematics, vol. 1292,
  Springer-Verlag, 1987, pp.~264--418.

\bibitem[She90]{shelahfobook}
\bysame, \emph{Classification theory and the number of non-isomorphic models},
  2nd ed., Studies in logic and the foundations of mathematics, vol.~92,
  North-Holland, 1990.

\bibitem[She99]{sh394}
\bysame, \emph{Categoricity for abstract classes with amalgamation}, Annals of
  Pure and Applied Logic \textbf{98} (1999), no.~1, 261--294.

\bibitem[She01]{sh576}
\bysame, \emph{Categoricity of an abstract elementary class in two successive
  cardinals}, Israel Journal of Mathematics \textbf{126} (2001), 29--128.

\bibitem[She09a]{shelahaecbook}
\bysame, \emph{Classification theory for abstract elementary classes}, Studies
  in Logic: Mathematical logic and foundations, vol.~18, College Publications,
  2009.

\bibitem[She09b]{shelahaecbook2}
\bysame, \emph{Classification theory for abstract elementary classes 2},
  Studies in Logic: Mathematical logic and foundations, vol.~20, College
  Publications, 2009.

\bibitem[SV99]{shvi635}
Saharon Shelah and Andr{\'e}s Villaveces, \emph{Toward categoricity for classes
  with no maximal models}, Annals of Pure and Applied Logic \textbf{97} (1999),
  1--25.

\bibitem[Van06]{vandierennomax}
Monica VanDieren, \emph{Categoricity in abstract elementary classes with no
  maximal models}, Annals of Pure and Applied Logic \textbf{141} (2006),
  108--147.

\bibitem[Van13]{nomaxerrata}
\bysame, \emph{Erratum to "{C}ategoricity in abstract elementary classes with
  no maximal models" [{A}nn. {P}ure {A}ppl. {L}ogic 141 (2006) 108-147]},
  Annals of Pure and Applied Logic \textbf{164} (2013), no.~2, 131--133.

\bibitem[Vasa]{ss-tame-toappear-v3}
Sebastien Vasey, \emph{Forking and superstability in tame {A}{E}{C}s}, The
  Journal of Symbolic Logic, To appear. URL:
  \url{http://arxiv.org/abs/1405.7443v3}.

\bibitem[Vasb]{indep-aec-v5}
\bysame, \emph{Independence in abstract elementary classes}, Preprint. URL:
  \url{http://arxiv.org/abs/1503.01366v5}.

\bibitem[Vasc]{sv-infinitary-stability-v6}
\bysame, \emph{Infinitary stability theory}, Preprint. URL:
  \url{http://arxiv.org/abs/1412.3313v6}.

\bibitem[VV]{vv-symmetry-transfer-v3}
Monica VanDieren and Sebastien Vasey, \emph{Symmetry in abstract elementary
  classes with amalgamation}, Preprint. URL:
  \url{http://arxiv.org/abs/1508.03252v3}.

\end{thebibliography}

\end{document}